\newcommand{\loc}{\operatorname{loc}}
\newcommand{\scal}[2]{\langle #1,#2\rangle}
\newcommand{\rr}[1]{\mathbf R^{#1}}
\newcommand{\nn}[1]{\mathbf N^{#1}}
\newcommand{\zz}[1]{\mathbf Z^{#1}}
\newcommand{\cc}[1]{\mathbf C^{#1}}
\newcommand{\nm}[2]{\Vert #1\Vert _{#2}}
\newcommand{\op}{\operatorname{Op}}
\newcommand{\sets}[2]{\{ \, #1\, ;\, #2\, \} }
\newcommand{\cdo}{\, \cdot \, }
\newcommand{\eabs}[1]{\langle #1\rangle}     
\newcommand{\vrum}{\vspace{0.1cm}}
\newcommand{\sumvec}{\operatorname{sum}}
\newcommand{\maclB}{\mathcal B}
\newcommand{\maclS}{\mathcal S}
\newcommand{\mascF}{\mathscr F}
\newcommand{\mascH}{\mathscr H}
\newcommand{\mascP}{\mathscr P}
\newcommand{\mascS}{\mathscr S}
\newcommand{\RE}{\operatorname{Re}}
\newcommand{\IM}{\operatorname{Im}}
\newcommand{\maclA}{\mathcal A}
\newcommand{\maclH}{\mathcal H}
\newcommand{\indlim}{\operatorname{ind \, lim\, }}
\renewcommand{\projlim}{\operatorname{proj \, lim\, }}
\numberwithin{equation}{section}          
\newtheorem{thm}{Theorem}
\numberwithin{thm}{section}
\newtheorem*{tom}{\rubrik}
\newcommand{\rubrik}{}
\newtheorem{prop}[thm]{Proposition}
\newtheorem{cor}[thm]{Corollary}
\newtheorem{lemma}[thm]{Lemma}
\theoremstyle{definition}
\newtheorem{defn}[thm]{Definition}
\theoremstyle{remark}
\newtheorem{rem}[thm]{Remark}              
\author{Joachim Toft}
\address{Department of Mathematics,
Linn{\ae}us University, V{\"a}xj{\"o}, Sweden}
\email{joachim.toft@lnu.se}
\author{Divyang G. Bhimani}
\address{Department of Mathematics,
Indian Institute of Science Education and Research, Pune, India}
\email{divyang.bhimani@iiserpune.ac.in}
\author{Ramesh Manna}
\address{School of Mathematical Sciences, National Institute of
Science Education and Research, Bhubaneswar, An OCC of
Homi Bhabha National Institute, Jatni 752050, India.}
\email{rameshmanna@niser.ac.in}
\title{Fractional Fourier transforms, harmonic oscillator
propagators and Strichartz estimates
on Pilipovi{\'c} and modulation spaces}
\keywords{Pilopovi{\'c} spaces, Modulation spaces,
Wiener amalgam, Bargmann
transform, Harmonic oscillator, propagators,
Strichartz estimates}
\subjclass[2010]{primary 46F05; 44A15; 32A25; 35Q41;
secondary 32A36}
\begin{document}

\begin{abstract}
We give a proof of that harmonic oscillator
propagators and fractional
Fourier transforms are essentially the same.
We deduce continuity properties and fix time
estimates for such operators on  modulation
spaces, and apply the results to prove
Strichartz estimates for such propagators
when acting on Pilipovi{\'c} and modulation
spaces. Especially we extend some results by
Balhara, Cordero, Nicola, Rodino and Thangavelu. 
We also show that general forms of fractional 
harmonic oscillator propagators 
are continuous on suitable Pilipovi{\'c} spaces.
\end{abstract}

\maketitle

\section{Introduction}\label{sec0} 

%
%
%
%
%
%
%
%

%
%

\par

In the paper we investigate mapping properties for
powers of harmonic oscillators, their
propagators and fractional Fourier
transforms on Pilipovi{\'c} spaces and modulation spaces. 
Especially we link fractional Fourier transforms with
harmonic oscillator propagators, which opens up for full 
transitions of various properties between these operators.

\par

We also deduce certain continuity properties for
fractional Fourier transforms on modulation spaces (including
Wiener amalgam spaces). By using the link between the fractional
Fourier transform and harmonic oscillator propagators we
extend at the same time certain continuity properties
of harmonic oscillator propagators on modulation spaces,
given in \cite{Bhi,BhiBalTha,CorGroNicRod,CorNic}.
These investigations are also related to \cite{ChFuGrWu},
where among others, $L^p$ and Hausdorff-Young estimates
for fractional Fourier transforms are established. Thereafter
we apply such continuity results to extend certain Strichartz
estimates for harmonic oscillator propagators in \cite{CorNic}
when acting on modulation spaces.
In the end we apply
our results to certain general classes of
time-dependent equations, similar to
Schr{\"o}dinger and heat equations. We prove that
several of these equations are ill-posed
in the framework of Schwartz space,
Gelfand-Shilov spaces and their distribution spaces,
but well-posed in the framework of suitable
Pilipovi{\'c} spaces.

\par

Harmonic oscillators and their propagators are important in
quantum mechanics, e.{\,}g. when investigating free particles
in quantum systems. An important question concerns
continuity for
such operators. For the (standard) harmonic oscillator
$$
H_x=|x|^2-\Delta _x,
\qquad
x\in \rr d,
$$
the corresponding propagator is given by
\begin{equation}\label{Eq:IntroHOscProp}
P_{\varrho} = e^{-i\varrho H_x},
\end{equation}
which can also be formulated as
\begin{equation}\tag*{(\ref{Eq:IntroHOscProp})$'$}
P_{\varrho} = e^{-iH_{x,\varrho}},
\qquad
H_{x,\varrho} = \varrho (|x|^2-\Delta _x).
\end{equation}
Here $\varrho \in \mathbf R$.
(See Sections \ref{sec1}--\ref{sec4} for more general
operators of such forms.)
It is well-known that these operators
are homeomorphisms
on the Schwartz space $\mascS (\rr d)$
and its dual $\mascS '(\rr d)$, the set of tempered 
distributions.
(See \cite{Ho1} and Section \ref{sec1} for notations.)
Similar continuity properties hold true with Pilipovi{\'c}
spaces $\maclH _{0,s}(\rr d)$ and $\maclH _s(\rr d)$,
and their distribution spaces
$\maclH _{0,s}'(\rr d)$ and $\maclH _s'(\rr d)$,
in place of $\mascS (\rr d)$ and $\mascS '(\rr d)$.
We recall that Fourier invariant (standard)
Gelfand-Shilov are special cases of Pilipovi{\'c}
spaces. More precisely we have
\begin{alignat*}{4}
\maclH _s(\rr d) &= \maclS _s(\rr d) \neq \{ 0 \} ,&
\quad s&\ge \frac 12, &
\qquad
\maclH _s(\rr d) &\neq \maclS _s(\rr d) = \{ 0 \} ,&
\quad s&< \frac 12,
\\[1ex]
\maclH _{0,s}(\rr d) &= \Sigma _s(\rr d) \neq \{ 0 \} ,&
\quad s&> \frac 12, &
\qquad
\maclH _{0,s}(\rr d) &\neq \Sigma _s(\rr d) = \{ 0 \} ,&
\quad s&\le \frac 12
\end{alignat*}
(see \cite{ReSi,Toft18}).

\par

Harmonic oscillators and their propagators also possess 
convenient
mapping properties in the background of suitable modulation 
spaces,
a family of (quasi-)Banach spaces which were introduced
in \cite{Fei1} by H. Feichtinger and further developed in 
\cite{Fei6,FeiGro1,FeiGro2,GaSa,Gc1}. For example, in
\cite{KatKobIto1,KatKobIto2,KatKobIto3,KatKobIto4},
several continuity results for Schr{\"o}dinger 
propagators including potential terms
when acting on modulation spaces are deduced. Harmonic
oscillator propagators are then obtained by choosing 
the potentials as
$c|x|^2$ for some positive constant $c$. For example, 
it is proved in 
\cite{KatKobIto1,KatKobIto2,KatKobIto3}
that for the Fourier invariant modulation space 
$M^p(\rr d)$, the map
\begin{equation}\label{Eq:FirstHarmPropContMod}
e^{iH_{x,\varrho}}\, :\, M^p(\rr d)\to M^p(\rr d)
\end{equation}
is continuous. See also 
\cite{Bhi,BhiBalTha,CorGroNicRod,CorNic}
for other results concerning mapping properties of
propagators on modulation spaces.

\par

In Section \ref{sec3} we give a strict proof, based on
the Bargmann transform,
of that the propagator in
\eqref{Eq:IntroHOscProp} can be identified with
fractional Fourier transforms by the formula
\begin{equation}\label{Eq:IntroFracFourHarmOscProp}
e^{-i\frac {\pi} 4H_{x,\varrho}}
=
e^{-i\frac {\pi \varrho  d}4}
\mascF _{\! \varrho} ,
\end{equation}
for every $\varrho \in \mathbf R$.
Here recall that the fractional Fourier transform
$\mascF _{\! \varrho}$, acting on $\mascS (\rr d)$
is given by
\begin{equation*}
\mascF _{\! \varrho} f(\xi )
=
\scal {K_{d,\varrho} (\xi ,\cdo )}f,
\end{equation*}
where $K_{d,\varrho} (\xi ,x)$ is the distribution
kernel, given by
\begin{equation}
K_{d,\varrho}
=
\bigotimes _{j=1}^d K_\varrho ,
\label{Eq:FracFourKernel}
\end{equation}
with
\begin{equation}
K_\varrho (\xi ,x)
=
\begin{cases}
\left (
\frac {1-i\cot (\frac {\pi \varrho }2)}{2\pi}
\right )^{\frac 12}
\exp
\left ( i\cdot
\frac {(x^2+\xi ^2)\cos ({\frac {\pi \varrho }2})
-2{\xi x}}{2\sin (\frac {\pi \varrho}2)}
\right ), & \! \varrho \in \mathbf R\setminus 2\mathbf Z,
\\[1ex]
\delta (\xi -x), & \! \varrho \in 4\mathbf Z,
\\[1ex]
\delta (\xi +x), & \! \varrho \in 2+4\mathbf Z,
\end{cases}
\label{Eq:FracFourKernelDim1}
\end{equation}
Here $x,\xi \in \mathbf R$.
(See e.{\,}g. \cite{Alm,OzaZalKut} and the references
therein. See also Section \ref{sec1} for more details on
fractional Fourier transform.) We observe that
\begin{align*}
(\mascF _1f)(\xi ) &= \frac 1{(2\pi )^{\frac d2}}\int _{\rr d}
f(x)e^{-i\scal x\xi}\, dx
\intertext{and}
(\mascF _3f)(\xi ) &= \frac 1{(2\pi )^{\frac d2}}\int _{\rr d}
f(x)e^{i\scal x\xi}\, dx
\end{align*}
are the (ordinary) Fourier transform and the inverse Fourier
transform, respectively.

\par

Ideas on fractional Fourier transforms goes back to
at least 1929 (cf. historical notes and references in
\cite{BulSul}). On the other hand, the first rigorous
approach seems to be given at first 1939 by Kober in
\cite{Kob}. Since then numerous applications of
fractional Fourier transforms have appeared.
For example, they were explicitly introduced in
quantum mechanics around 1980 and thereafter applied in
optics (see e.{\,}g. \cite{FanHu,Nam} and the references therein).
In quantum mechanics,
the formulae \eqref{Eq:FracFourKernel} and
\eqref{Eq:FracFourKernelDim1} appear naturally by considering
certain rotations in the phase space and their induced actions
on quantum observables (see e.{\,}g.
\cite{CorGroNicRod,CorNic,dGo1,dGo2,dGoLue}
and Remark \ref{Rem:FracFour} in Section \ref{sec1}).
There are also several 
applications in signal analysis e.{\,}g. when discussing
rotation properties for time-frequency representations
in time-frequency analysis, phase retrieval,
optics and pattern recognition.
(See e.{\,}g. \cite{AliBas,Alm,FanHu,OzaZalKut}
and the references therein.)
Here we also remark that in some aspects, the theory of
the fractional Fourier transform is merely a special case
of the metaplectic representation (see \cite{dGoLue}).

\par

In terms of the Bargmann
transform $\mathfrak V_d$, fractional Fourier transforms
take the convenient form
\begin{equation}\label{Eq:BargApprFracFourTrans}
(\mathfrak V_d(\mascF _{\! \varrho} f))(z)
=
(\mathfrak V_df)(e^{-i\frac {\pi \varrho}2}z),
\qquad
z\in \cc d.
\end{equation}
(See e.{\,}g. \cite{Ba1,Toft18}.)
The relation \eqref{Eq:BargApprFracFourTrans}
was obtained for $\varrho =1$ already in
\cite{Ba1} by Bargmann.

\par

A motivation of the identity
\eqref{Eq:IntroFracFourHarmOscProp} is
given in e.{\,}g. \cite{KutOza},
where several links between
the harmonic oscillator and fractional Fourier
transforms are established. We also remark that
F. G. Mehler established a formula
(afterwards named \emph{Mehler's formula})
for the operator $e^{-H_{x,\varrho}}$, $\varrho >0$,
already in \cite{Meh}. Observe that
$e^{-H_{x,\varrho}}$ is the canonical density operator
in statistical physics, see \cite[Section 3.4]{SakNap}.

\par

Analytic extensions of Mehler's formula lead to
that the kernel of $e^{-i\frac \pi 4H_{x,\varrho}}$
is essentially the same as the kernel of the fractional
Fourier transform \eqref{Eq:FracFourKernel} and
\eqref{Eq:FracFourKernelDim1}.

\par

In the aftermath of \eqref{Eq:IntroFracFourHarmOscProp}
one may link several results in e.{\,}g.
\cite{Toft10,Toft18}
concerning mapping properties of
fractional Fourier transforms on modulation spaces
with analogous results in e.{\,}g.
\cite{Bhi,BhiBalTha,CorGroNicRod,CorNic,KatKobIto1,
KatKobIto2,KatKobIto3,KatKobIto4}
concerning mapping properties of the propagators
on modulation spaces. Some explicit demonstration of
such transfers are given in Section \ref{sec3}.
Here it is addressed
that $e^{iH_{x,\varrho}}$ is homeomorphic on
$M^q_{(\omega )}(\rr d)$
for suitable weight $\omega$, because the
fractional Fourier transforms possess the same
continuity properties in view of
\cite[Proposition 7.1]{Toft18}.
(See Propositions \ref{Prop:FracFourMod} and
\ref{Prop:HarmPropMod} in Section \ref{sec3}.)
More generally, in Section \ref{sec3} we show
that for $\varrho \in \mathbf R\setminus 2\mathbf Z$
and $q\le p$,
then $\mascF _{\! \varrho}$ is continuous
from $M^{p,q}_{(\omega )}(\rr d)$ to 
$M^{q,p}_{(\omega )}(\rr d)$,
and from $W^{q,p}_{(\omega )}(\rr d)$ to 
$W^{p,q}_{(\omega )}(\rr d)$,
for suitable weights $\omega$.
Again, by using the link
\eqref{Eq:IntroFracFourHarmOscProp} we transfer 
these continuity
properties for fractional Fourier
transforms to harmonic
oscillator propagators. In fact, the following proposition
is obtained by letting the weights in
Theorems \ref{Thm:FrFTModSpec1} and \ref{Thm:FrFTModSpec2}
in Section \ref{sec3}, be trivially equal to one.
(See also Proposition
\ref{Prop:ConseqMainThms}$'$ in Section \ref{sec3}.)

\par

\begin{prop}\label{Prop:ConseqMainThms}
Let $\varrho \in \mathbf R$
and $p,q\in (0,\infty]$ be such that $q\le p$.
Then the following is true:
\begin{enumerate}
\item the map
\begin{alignat*}{2}
\mascF _{\! \varrho} =e^{-i\frac \pi 4H_{x,\varrho}}
\, &:\, M^{p,q}(\rr d)+W^{q,p}(\rr d) &
&\to
M^{q,p}(\rr d)+W^{p,q}(\rr d)
\end{alignat*}
is continuous;

\vrum

\item if in addition $\varrho \notin \mathbf Z$, then the map
\begin{alignat*}{2}
\mascF _{\! \varrho} =e^{-i\frac \pi 4H_{x,\varrho}}
\, &:\, M^{p,q}(\rr d)+W^{q,p}(\rr d) &
&\to
M^{q,p}(\rr d){\textstyle \bigcap}W^{p,q}(\rr d)
\end{alignat*}
is continuous.
\end{enumerate}
\end{prop}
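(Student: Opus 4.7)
The plan is to deduce Proposition \ref{Prop:ConseqMainThms} directly from Theorems \ref{Thm:FrFTModSpec1} and \ref{Thm:FrFTModSpec2} by specializing their weighted statements to the trivial weight $\omega \equiv 1$. I read Theorem \ref{Thm:FrFTModSpec1} as the ``same-type'' continuity result providing
\begin{equation*}
\mascF _{\! \varrho} : M^{p,q}(\rr d) \to M^{q,p}(\rr d)
\qquad \text{and} \qquad
\mascF _{\! \varrho} : W^{q,p}(\rr d) \to W^{p,q}(\rr d)
\end{equation*}
whenever $\varrho \in \mathbf R \setminus 2\mathbf Z$ and $q \le p$, and Theorem \ref{Thm:FrFTModSpec2} as furnishing the ``cross-type'' continuity $\mascF _{\! \varrho} : M^{p,q}(\rr d) \to W^{p,q}(\rr d)$ and $\mascF _{\! \varrho} : W^{q,p}(\rr d) \to M^{q,p}(\rr d)$ under the stronger hypothesis $\varrho \notin \mathbf Z$.

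For part (1) with $\varrho \in \mathbf R \setminus 2\mathbf Z$, I would write any $f \in M^{p,q}(\rr d) + W^{q,p}(\rr d)$ as $f = f_1 + f_2$, apply Theorem \ref{Thm:FrFTModSpec1} to each summand, and conclude $\mascF _{\! \varrho} f \in M^{q,p}(\rr d) + W^{p,q}(\rr d)$ by linearity, with the bound on the quotient norm. The remaining cases $\varrho \in 2\mathbf Z$ are handled by direct inspection of \eqref{Eq:FracFourKernelDim1}: $\mascF _{\! \varrho}$ is then the identity or the parity operator $f \mapsto f(-\,\cdot\,)$, each of which preserves $M^{p,q}$ and $W^{q,p}$, and the standard embedding $M^{p,q}(\rr d) \hookrightarrow W^{p,q}(\rr d)$ valid for $q \le p$ (dually, $W^{q,p}(\rr d) \hookrightarrow M^{q,p}(\rr d)$) places the image in $M^{q,p}(\rr d) + W^{p,q}(\rr d)$.

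For part (2), the hypothesis $\varrho \notin \mathbf Z$ is essential because the integer cases fail in mutually incompatible ways: for even $\varrho$ the operator acts trivially on $M^{p,q}(\rr d)$, whose image is not contained in $M^{q,p}(\rr d)$ when $p \ne q$, while for odd $\varrho$ it coincides up to a phase with the ordinary Fourier transform, whose image on $M^{p,q}(\rr d)$ lies in $W^{q,p}(\rr d)$ but not in $W^{p,q}(\rr d)$ when $p \ne q$. For $\varrho \notin \mathbf Z$, Theorem \ref{Thm:FrFTModSpec1} and Theorem \ref{Thm:FrFTModSpec2} together send each of the summands of $f = f_1 + f_2$ simultaneously into both $M^{q,p}(\rr d)$ and $W^{p,q}(\rr d)$, so the splitting argument of part (1) now deposits $\mascF _{\! \varrho} f$ in the intersection. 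The main obstacle I anticipate is verifying the precise form of Theorem \ref{Thm:FrFTModSpec2}---in particular, confirming that it supplies the cross-continuity for \emph{all} $\varrho \notin \mathbf Z$ with no extra restriction on $p,q$ beyond $q \le p$---since conceptually this reflects that a genuinely non-integer phase-space rotation mixes time and frequency coordinates, leaving no rigid $M$-versus-$W$ dichotomy intact.
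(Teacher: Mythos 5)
Your proposal is correct and takes essentially the same route as the paper, which obtains the proposition by specializing Theorems \ref{Thm:FrFTModSpec1} and \ref{Thm:FrFTModSpec2} to the trivial weight $\omega \equiv 1$ and splitting $f=f_1+f_2$ exactly as you do. The only (harmless) deviation is your treatment of $\varrho \in 2\mathbf Z$ in part (1) by direct inspection of the kernel together with the embeddings $M^{p,q}\hookrightarrow W^{p,q}$ and $W^{q,p}\hookrightarrow M^{q,p}$ for $q\le p$, where the paper instead just notes that Theorem \ref{Thm:FrFTModSpec2} still applies (its hypothesis only excludes odd integers), which settles the concern you raise at the end: the two theorems together cover every $\varrho\in\mathbf R$ for part (1) and every $\varrho\notin\mathbf Z$ for part (2) with no further restriction beyond $q\le p$.
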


\par

There are several types of estimates behind
the conclusions in the previous proposition,
which are expected to be applicable in
non-linear partial differential equations.
Examples on such estimates are
\begin{alignat*}{2}
\nm {\mascF _{\! \varrho} f}{M^{q,p}}
&=
\nm {e^{-i\frac \pi 4H_{x,\varrho}}f}{M^{q,p}}
\lesssim
|\sin (\textstyle{\frac {\pi \varrho}2})|^{d(\frac 1p-\frac 1q)}
\nm f{M^{p,q}},
\\[1ex]
\nm {\mascF _{\! \varrho} f}{M^{q,p}}
&=
\nm {e^{-i\frac \pi 4H_{x,\varrho}}f}{M^{q,p}}
\lesssim
|\cos (\textstyle{\frac {\pi \varrho }2})|
^{d(\frac 1p-\frac 1q)}
\nm f{W^{q,p}},
\\[1ex]
\nm {\mascF _{\! \varrho} f}{W^{p,q}}
&=
\nm {e^{-i\frac \pi 4H_{x,\varrho}}f}{W^{p,q}}
\lesssim
|\cos (\textstyle{\frac {\pi \varrho}2})|
^{d(\frac 1p-\frac 1q)}
\nm f{M^{p,q}},
\intertext{and}
\nm {\mascF _{\! \varrho} f}{W^{p,q}}
&=
\nm {e^{-i\frac \pi 4H_{x,\varrho}}f}{W^{p,q}}
\lesssim
|\sin (\textstyle{\frac {\pi \varrho}2})|
^{d(\frac 1p-\frac 1q)}
\nm f{W^{q,p}},
\end{alignat*}
still with $q\le p$ (see Theorems \ref{Thm:FrFTModSpec1}
and \ref{Thm:FrFTModSpec2}).

\medspace

Some of our investigations include more 
general propagators and fractional Fourier transforms,
where $\varrho$ in 
\eqref{Eq:IntroHOscProp}$'$ and
\eqref{Eq:IntroFracFourHarmOscProp} is allowed to
be any complex number.
If $\IM (\zeta )>0$, then $P_{\varrho}$
does neither make sense as a continuous
operator on $\mascS (\rr d)$,
nor on any Fourier invariant Gelfand-Shilov space
and their duals.
Consequently, in order to investigate such extended
class of propagators, or, even more generally,
powers of $H_{x,\varrho}$ and their propagators, i.{\,}e. operators
of the forms
\begin{equation}\label{Eq:IntroFracPowHOscProp}
H_{x,\varrho}^r
\quad \text{and}\quad
P_{\varrho, r} = e^{-iH_{x,\varrho}^r},
\qquad \varrho \in \mathbf C,\ r\in \mathbf R,
\end{equation}
other families of function and distribution spaces
are needed.
It turns out that such continuity discussions can
be performed in the framework of
certain Pilipovi{\'c} spaces and their distribution
spaces.

\par

In order to shed some further lights,
we present the following propositions,
which are immediate consequences of our
investigations. For the fractional Fourier transforms,
these conclusions also follows from the analysis
in \cite{Toft18}.

\par

\begin{prop}\label{Prop:IntroFracFourComplOrder}
Let $\varrho \in \mathbf C$ and $s\in \overline{\mathbf R}_\flat$.
Then the following is true:
\begin{enumerate}
\item if $s< \frac 12$, then $\mascF _{\! \varrho}$
and $e^{-i\frac \pi 4H_{x,\varrho}}$ are homeomorphisms on
$\maclH _s(\rr d)$ and on $\maclH _s'(\rr d)$;

\vrum

\item if $\IM (\varrho )<0$ and $s\ge \frac 12$, then
$\mascF _{\! \varrho}$
and $e^{-i\frac \pi 4H_{x,\varrho}}$ are continuous
injections but not surjections on
$\maclH _s(\rr d)$, $\mascS (\rr d)$, $\mascS '(\rr d)$
and on $\maclH _s'(\rr d)$;

\vrum

\item if $\IM (\varrho )=0$, then
$\mascF _{\! \varrho}$
and $e^{-i\frac \pi 4H_{x,\varrho}}$ are homeomorphisms
on $\maclH _s(\rr d)$, $\mascS (\rr d)$, $\mascS '(\rr d)$
and on $\maclH _s'(\rr d)$;

\vrum

\item if $\IM (\varrho )>0$ and $s\ge \frac 12$, then
$\mascF _{\! \varrho}$
and $e^{-i\frac \pi 4H_{x,\varrho}}$ are discontinuous on
$\maclH _s(\rr d)$, $\mascS (\rr d)$, $\mascS '(\rr d)$
and on $\maclH _s'(\rr d)$.
\end{enumerate}
The same holds true with $s> \frac 12$, $s\le \frac 12$
and $\maclH _{0,s}$ in place of $s\ge \frac 12$, $s< \frac 12$
and $\maclH _s$ at each occurrence.
\end{prop}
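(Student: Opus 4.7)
The strategy is to push the whole question through the Bargmann transform $\mathfrak{V}_d$ and exploit the intertwining \eqref{Eq:BargApprFracFourTrans}, under which $\mascF_{\!\varrho}$ is conjugated to the complex dilation $D_w\colon F(z)\mapsto F(wz)$, $w = e^{-i\pi\varrho/2}$. Identity \eqref{Eq:IntroFracFourHarmOscProp} shows that $e^{-i\frac{\pi}{4}H_{x,\varrho}}$ differs from $\mascF_{\!\varrho}$ only by a nonzero scalar, so it suffices to analyze $\mascF_{\!\varrho}$. A direct calculation gives $|w| = e^{\pi \IM \varrho /2}$, so the sign of $\IM \varrho$ exactly determines whether $|w|<1$, $=1$, or $>1$; the dual-space statements are transferred through the standard sesquilinear pairing on the Fock side.

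I would next invoke the Bargmann characterizations of the relevant spaces as entire-function classes on $\cc d$: $\mathfrak{V}_d(\maclH_s(\rr d))$ is, up to the standard Fock weight, the space of entire $F$ satisfying $|F(z)|\lesssim e^{r|z|^{1/s}}$ for some $r>0$ (Roumieu; with ``some'' replaced by ``every'' in the Beurling case $\maclH_{0,s}$), while $\mathfrak{V}_d(\mascS(\rr d))$ consists of entire $F$ with $|F(z)|(1+|z|)^N \lesssim e^{|z|^2/2}$ for every $N$, and $\mathfrak{V}_d(\mascS'(\rr d))$ is the dual class with the same bound for some $N$. The decisive dichotomy is whether $1/s > 2$ or $1/s \leq 2$, equivalently $s<\tfrac12$ or $s\geq \tfrac12$: it dictates whether the super-quadratic term $|z|^{1/s}$ or the quadratic reference $|z|^2/2$ governs the growth class.

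The four cases now fall out by tracking $|D_w F(z)|=|F(wz)|$. In case (i), $1/s>2$, so $|D_w F(z)|\lesssim e^{r|w|^{1/s}|z|^{1/s}}$ stays in the same class with the new parameter $r|w|^{1/s}$; applying this to $D_{1/w}$ yields a two-sided inverse, so $D_w$ is a homeomorphism on $\mathfrak{V}_d(\maclH_s)$ and, dually, on $\mathfrak{V}_d(\maclH_s')$, for every $\varrho$. In case (iii), $|w|=1$ reduces $D_w$ to pull-back by a rotation of $\cc d$, preserving every growth bound that depends only on $|z|$. In cases (ii) and (iv), $s\geq\tfrac12$ and the key identity is $e^{|wz|^2/2}=e^{|z|^2/2}e^{(|w|^2-1)|z|^2/2}$: when $|w|<1$ the extra factor is a Gaussian decay, so $D_w$ continuously injects each space into a strictly smaller subspace, the non-surjectivity being witnessed by a one-parameter family of Gaussians $g_a(x)=e^{-a|x|^2/2}$ whose Bargmann images realize Fock exponents accumulating at the extremal threshold; when $|w|>1$ the extra factor is Gaussian growth, which cannot be accommodated by any of the four target classes, so $D_w$ fails even to map $\mathfrak{V}_d(X)$ into itself, yielding discontinuity. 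The main obstacle will be the non-surjectivity verification in (ii), which requires exhibiting explicit extremal Gaussians or finite Hermite combinations in the target class; the Beurling analogue stated in the final sentence of the proposition then follows by the uniform replacement of ``some $r$'' by ``every $r$'' throughout.
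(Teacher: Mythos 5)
Your overall strategy --- conjugate everything by the Bargmann transform, reduce both operators to the dilation $F(z)\mapsto F(wz)$ with $|w|=e^{\pi \IM (\varrho )/2}$, and let the sign of $\IM (\varrho )$ decide between contraction, rotation and expansion --- is sound, and it is a genuinely different route from the paper's. The paper never works with growth estimates of entire functions here: since the Pilipovi{\'c} spaces are \emph{defined} by weighted $\ell ^\infty$-conditions on Hermite coefficients, and since $\mascF _{\! \varrho}$ and $e^{-i\frac \pi 4H_{x,\varrho}}$ act diagonally in that basis by $c(\alpha )\mapsto e^{-i\frac \pi 2\varrho |\alpha |}c(\alpha )$ up to a constant, everything reduces to asking which sequence classes are preserved by a multiplier of modulus $e^{\frac \pi 2 \IM (\varrho )|\alpha |}$; this is Propositions \ref{Prop:BargPilSpaces} and \ref{Prop:BargPilSpacesNonCont} combined with Theorem \ref{Thm:HarmPropFracFourT}, with discontinuity witnessed by explicit Hermite series such as $\sum e^{-(\log (1+|\alpha |))^2}h_\alpha$. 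Your route buys a geometric picture, and your Gaussian witnesses $e^{-a|x|^2/2}$ (Bargmann images $ce^{\lambda z^2/2}$ with $|\lambda |$ arbitrarily close to $1$) do serve uniformly for the non-surjectivity in (2) and the discontinuity in (4); the paper's route buys brevity, because the relevant conditions are the definitions themselves.

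However, the Bargmann characterizations you invoke are not correct as stated, and this is a step that would fail if left as written. For $s<\frac 12$ the Roumieu image $\maclA _s(\cc d)$ consists of entire functions with $|F(z)|\lesssim e^{r(\log \eabs z)^{1/(1-2s)}}$ for some $r>0$ (and $e^{r|z|^{2\sigma /(\sigma +1)}}$ when $s=\flat _\sigma$), not $e^{r|z|^{1/s}}$; since $1/s>2$ your class would contain all of $A^2(\cc d)$ and more. Worse, for $s<\frac 12$ the duals $\maclA _s'(\cc d)$ are spaces of formal power series with no entire-function characterization at all, so the growth-class argument cannot touch them directly --- your fallback to duality on the Fock pairing is the correct fix and must carry the whole of case (1) for the dual spaces. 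For $s\ge \frac 12$ the correct Roumieu form is $|F(z)|\lesssim e^{\frac 12|z|^2-r|z|^{1/s}}$ for some $r>0$, so the Fock weight enters with a sign you have to track rather than as a harmless normalization. None of this disturbs your case dichotomy --- sub-quadratic growth classes are dilation-invariant, and for the critically Gaussian classes the factor $e^{(|w|^2-1)|z|^2/2}$ decides everything, exactly as in Proposition \ref{Prop:IntroFracFourComplOrderRefine} --- but the characterizations need to be corrected, and the continuity claims still require tracking how the parameters $r$, $N$ transform under the inductive and projective limits.
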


\par

\begin{prop}\label{Prop:IntroFracFourComplOrderRefine}
Let $\varrho \in \mathbf C$. Then the following is true:
\begin{enumerate}
\item if $\IM (\varrho )<0$, then $\mascF _{\! \varrho}$
and $e^{-i\frac \pi 4H_{x,\varrho}}$ are continuous from
$\maclS _{1/2}'(\rr d)$ to $\maclS _{1/2}(\rr d)$, and
$$
\mascF _{\! \varrho} (\maclS _{1/2}'(\rr d)) =
e^{-i\frac \pi 4H_{x,\varrho}} (\maclS _{1/2}'(\rr d))
\subsetneq \maclS _{1/2}(\rr d) \text ;
$$

\item if $\IM (\varrho )>0$, then $\mascF _{\! \varrho}$
and $e^{-i\frac \pi 4H_{x,\varrho}}$ are discontinuous from
$\maclS _{1/2}(\rr d)$ to $\maclS _{1/2}'(\rr d)$, and
$$
\maclS _{1/2}'(\rr d) \subsetneq
\mascF _{\! \varrho} (\maclS _{1/2}(\rr d))
=
e^{-i\frac \pi 4H_{x,\varrho}}(\maclS _{1/2}(\rr d))
\subsetneq
\maclH _{0,1/2}'(\rr d).
$$
%
\end{enumerate}
\end{prop}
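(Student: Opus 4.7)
The plan is to reduce the statement to a simple multiplier condition on Hermite expansions via the Bargmann transform. Using \eqref{Eq:BargApprFracFourTrans} together with the well-known identity $\mathfrak V_d h_\alpha (z) = z^\alpha/\sqrt{\alpha !}$, one immediately reads off the diagonal action
\begin{equation*}
\mascF _{\! \varrho} h_\alpha = e^{-i\pi \varrho |\alpha |/2}h_\alpha ,
\qquad \alpha \in \nn d ,
\end{equation*}
so that for $f = \sum _\alpha c_\alpha h_\alpha $ the Hermite coefficients of $\mascF _{\! \varrho}f$ are $c_\alpha \mu _\alpha $ with $|\mu _\alpha | = e^{c|\alpha |}$ and $c = \pi \IM (\varrho )/2$. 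I would then couple this with the standard Hermite-coefficient descriptions at the critical exponent $s=\frac 12$ (cf.\ \cite{Toft18}), namely: $f\in \maclS _{1/2}$ iff $|c_\alpha | \le Ce^{-\delta |\alpha |}$ for some $\delta >0$; $f\in \maclS _{1/2}'$ iff $|c_\alpha | \le C_\varepsilon e^{\varepsilon |\alpha |}$ for every $\varepsilon >0$; and $f\in \maclH _{0,1/2}'$ iff $|c_\alpha | \le Ce^{\delta |\alpha |}$ for some $\delta >0$. Since by \eqref{Eq:IntroFracFourHarmOscProp} the operators $\mascF _{\! \varrho}$ and $e^{-i\frac \pi 4H_{x,\varrho}}$ differ only by a scalar factor, it suffices to argue for $\mascF _{\! \varrho}$.

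For part (1), where $\IM (\varrho ) < 0$ and hence $c<0$, any $(c_\alpha )$ satisfying the $\maclS _{1/2}'$ bound produces coefficients with $|c_\alpha \mu _\alpha |\le C_\varepsilon e^{(\varepsilon +c)|\alpha |}$; choosing $\varepsilon =|c|/2$ yields a genuine $\maclS _{1/2}$ decay rate, and the obvious boundedness of the multiplier between the natural seminormed pieces of the two spaces gives the continuous map $\maclS _{1/2}'\to \maclS _{1/2}$. For the strict inclusion I would take $b_\alpha = e^{c|\alpha |/2}$, which lies in $\maclS _{1/2}$ since $c<0$, and note that its only possible preimage is $b_\alpha \mu _\alpha ^{-1} = e^{-c|\alpha |/2}$, which has genuine geometric growth and hence belongs to $\maclH _{0,1/2}'\setminus \maclS _{1/2}'$.

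For part (2), where $\IM (\varrho ) > 0$ and hence $c>0$, the same multiplier shows that $\mascF _{\! \varrho}(\maclS _{1/2})$ consists exactly of sequences $(b_\alpha )$ with $|b_\alpha |\le Ce^{(c-\delta )|\alpha |}$ for some $\delta >0$. This class lies inside $\maclH _{0,1/2}'$ but is not contained in $\maclS _{1/2}'$, which already rules out continuity of $\mascF _{\! \varrho }\colon \maclS _{1/2}\to \maclS _{1/2}'$. The two strict inclusions follow from explicit witnesses: $b_\alpha = e^{2c|\alpha |}$ lies in $\maclH _{0,1/2}'$ but violates the image bound for every $\delta >0$, while $b_\alpha = e^{c|\alpha |/2}$ lies in the image (with preimage $e^{-c|\alpha |/2}\in \maclS _{1/2}$) but fails the $\maclS _{1/2}'$ bound at any $\varepsilon <c/2$.

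The main obstacle in making this rigorous is not the multiplier bookkeeping but the careful invocation of the Hermite-coefficient descriptions of $\maclS _{1/2}$, $\maclH _{0,1/2}$ and their duals at the critical exponent $s=\frac 12$, where the classes differ from those in the subcritical range and must be handled with the refined tools developed in \cite{Toft18}.
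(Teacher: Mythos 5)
Your proposal is correct and follows essentially the same route as the paper, which derives these statements from the Bargmann-side diagonalization \eqref{Eq:BargmannActFracFourTr} (equivalently the Hermite multiplier $h_\alpha \mapsto e^{-i\pi \varrho |\alpha |/2}h_\alpha$) combined with the exponential Hermite-coefficient characterizations of $\maclS _{1/2}$, $\maclS _{1/2}'$ and $\maclH _{0,1/2}'$ from \cite{Toft18}, leaving the details to the reader. The only point worth making explicit is that in part (2) the inclusion $\maclS _{1/2}'\subseteq \mascF _{\! \varrho}(\maclS _{1/2})$ itself follows from your exact description of the image by taking $\varepsilon =c/2$ in the $\maclS _{1/2}'$ bound; your witnesses then give strictness on both sides.
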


\par

By usual inclusion relations for Pilipovi{\'c} spaces, it follows that
Proposition \ref{Prop:IntroFracFourComplOrderRefine}
is a refinement of (2) and (4) in Proposition
\ref{Prop:IntroFracFourComplOrder}. In fact, consider
the inclusions
\begin{align}
\maclS _s(\rr d)
&\subsetneq \Sigma _t(\rr d)\subsetneq \maclS _t(\rr d)
\subsetneq \mascS (\rr d)
\notag
\\[1ex]
&\subsetneq \mascS '(\rr d)
\subsetneq \maclS _t(\rr d)\subsetneq \Sigma _t'(\rr d)
\subsetneq \maclS _s'(\rr d),\quad
\frac 12 \le s<t,
\label{Eq:SpacesInclusions}
\end{align}
between the Schwartz space, its distribution space, 
and all (standard) Fourier invariant Gelfand-Shilov spaces of
functions and ultra-distributions. Then
Proposition 
\ref{Prop:IntroFracFourComplOrderRefine} (1) shows that
if $\IM (\varrho )<0$, then the images of the
spaces in \eqref{Eq:SpacesInclusions}
under $\mascF _{\! \varrho}$
and $e^{-i\frac \pi 4H_{x,\varrho}}$ are strict subspaces
of $\maclS _{1/2}(\rr d)$, the smallest space in
\eqref{Eq:SpacesInclusions}. If instead $\IM (\varrho )>0$,
then Proposition 
\ref{Prop:IntroFracFourComplOrderRefine} (2) shows that
the image of this smallest space
$\maclS _{1/2}(\rr d)$ under $\mascF _{\! \varrho}$
and $e^{-i\frac \pi 4H_{x,\varrho}}$ is a
superspace of $\maclS _{1/2}'(\rr d)$,
the largest space in \eqref{Eq:SpacesInclusions}.

\par

This implies, roughly speaking, that the (standard) spaces
in \eqref{Eq:SpacesInclusions} are disqualified when performing
detailed continuity investigations of the canonical density operator
$e^{-H_{x,\varrho}}$ in statistical physics, and that these spaces
can not be used in continuity investigations of the inverse
$e^{H_{x,\varrho}}$ of that operator. On the other hand, Proposition
\ref{Prop:IntroFracFourComplOrder} (1) shows that Pilipovi{\'c}
spaces and their distribution spaces, which are not
Gelfand-Shilov spaces of functions and distributions, are suitable
for continuity investigations of $e^{-H_{x,\varrho}}$ and
$e^{H_{x,\varrho}}$.

\par

In the most general
case we allow $\varrho \in \cc d$, in which
case $H_{x,\varrho}$ is defined as
$$
H_{x,\varrho}
=
\sum _{j=1}^d\rho _j(x_j^2-\partial _j^2),
$$
and $\mascF _{\! \varrho}$ is a fractional Fourier transform
of the multiple order $\varrho$ (see e.{\,}g. \cite{dGoLue}).
%
%
In Section \ref{sec3} we show
that \eqref{Eq:IntroFracFourHarmOscProp}
and \eqref{Eq:BargApprFracFourTrans}
still hold true for such general $\varrho$.

\par

Finally, in Section \ref{sec4} we apply
our results to deduce certain types of Strichartz estimates. We
recall that Strichartz estimates appears
when finding properties on solutions to Cauchy problems like
the Schr{\"o}dinger equation
\begin{equation}\label{Eq:SchrEqParPot}
\begin{cases}
i\partial _tu-H_xu=F, 
\\[1ex]
u(0,x)=u_0(x),\qquad
(t,x)\in I\times \rr d.
\end{cases}
\end{equation}
Here $I=[0,\infty )$ or $I=[0,T]$ for some $T>0$,
$F$ is a suitable function or (ultra-)distribution on $I\times \rr d$
and $u_0$ is a suitable function or (ultra-)distribution on $\rr d$.
It follows that continuity properties of the propagator
\begin{alignat}{2}
(Ef)(t,x) &\equiv (e^{-itH_x}u_0)(x), &
\quad
(t,x) &\in I\times \rr d,
\label{Eq:DefEROpIntro}
\\[1ex]
(S_{1}F)(t,x)
&= 
\int _0^t (e^{-i(t-s)H_x}F(s,\cdo ))(x)
\, ds, &
\qquad
(t,x) &\in I\times \rr d,
\label{Eq:DefS1ROpIntro}
\intertext{and for}
(S_{2}F)(t,x)
&= 
\int _I (e^{-i(t-s)H_x}F(s,\cdo ))(x)
\, ds, &
\quad
(t,x)&\in I\times \rr d,
\label{Eq:DefS2ROpIntro}
\end{alignat}
are essential when finding estimates for solutions to
\eqref{Eq:SchrEqParPot} (see \cite{CorNic,GinVel,Tag}).
Such estimates are called \emph{Strichartz estimates}
(see also Subsection \ref{subsec1.8} for more details).

\par

In Section \ref{sec4} we deduce Strichartz estimates of
the operators $E$, $S_1$ and $S_2$ when acting
on modulation spaces or Lebesgue spaces with values in
modulation spaces.
For example by straight-forward applications
of Proposition \ref{Prop:ConseqMainThms}, we get
the following result, which is a special case of
Theorem \ref{Thm:StrichEstMod2}
in Section \ref{sec4}.

\par

\begin{thm}\label{Thm:StrichEstMod2Intro}
Let $p,q,r_0\in (0,\infty]$ be such that $q\le p$
and
\begin{equation*}
\frac 1{r_0}>d\left (\frac 1q -\frac 1p\right ).
\end{equation*}
Then $E$ is uniquely extendable to a continuous map
$$
E : M^{p,q}(\rr d)
+
W^{q,p}(\rr d)
\to
L^{r_0}([0,T];M^{q,p}(\rr d))
\bigcap
L^{r_0}([0,T];W^{p,q}(\rr d)).
$$
\end{thm}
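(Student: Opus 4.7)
The plan is to reduce everything to the four fixed-time estimates listed immediately after Proposition~\ref{Prop:ConseqMainThms} and then take an $L^{r_0}$ norm in time. The starting point is the identity \eqref{Eq:IntroFracFourHarmOscProp}, which with the choice $\varrho = 4t/\pi$ (so that $H_{x,\varrho}=\varrho H_x$ and $\pi\varrho/2 = 2t$) gives
$$
(Eu_0)(t,\cdo ) \;=\; e^{-itH_x}u_0 \;=\; e^{-itd}\,\mascF _{\! 4t/\pi}u_0,
$$
turning the time variable of the propagator into the order variable of a fractional Fourier transform.

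Fix $u_0\in M^{p,q}(\rr d)+W^{q,p}(\rr d)$, decompose $u_0=f_1+f_2$ with $f_1\in M^{p,q}(\rr d)$ and $f_2\in W^{q,p}(\rr d)$, and set $\alpha := d(1/q-1/p)\ge 0$. The four inequalities displayed after Proposition~\ref{Prop:ConseqMainThms} then yield
\begin{align*}
\nm{(Eu_0)(t,\cdo )}{M^{q,p}} &\lesssim |\sin(2t)|^{-\alpha}\nm{f_1}{M^{p,q}} + |\cos(2t)|^{-\alpha}\nm{f_2}{W^{q,p}},
\\[1ex]
\nm{(Eu_0)(t,\cdo )}{W^{p,q}} &\lesssim |\cos(2t)|^{-\alpha}\nm{f_1}{M^{p,q}} + |\sin(2t)|^{-\alpha}\nm{f_2}{W^{q,p}}.
\end{align*}
Passing to the infimum over admissible decompositions produces the single pointwise bound
$$
\nm{(Eu_0)(t,\cdo )}{M^{q,p}} + \nm{(Eu_0)(t,\cdo )}{W^{p,q}}
\lesssim
\bigl(|\sin(2t)|^{-\alpha}+|\cos(2t)|^{-\alpha}\bigr)\,\nm{u_0}{M^{p,q}+W^{q,p}}.
$$

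Taking $L^{r_0}([0,T])$ norms is now routine: the hypothesis $1/r_0 > d(1/q-1/p) = \alpha$ is exactly the condition $\alpha r_0 < 1$, which guarantees that $|\sin(2t)|^{-\alpha}$ and $|\cos(2t)|^{-\alpha}$ are $r_0$-integrable over $[0,T]$ (their singularities in $[0,T]$ form a finite set of algebraic blow-ups of order exactly $\alpha$). This establishes the claimed estimate on a dense subspace, for instance on $\maclH _s(\rr d)$ with $s<1/2$, where $e^{-itH_x}$ is a homeomorphism by Proposition~\ref{Prop:IntroFracFourComplOrder}\,(1) and the pointwise action is unambiguous; extension by density/completion then produces the unique continuous map on the full sum space $M^{p,q}(\rr d)+W^{q,p}(\rr d)$.

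The only part genuinely requiring care is this last density/measurability step rather than the core estimate: one has to check that $t\mapsto (Eu_0)(t,\cdo )$ is strongly measurable into the (possibly quasi-Banach) target $M^{q,p}(\rr d)\cap W^{p,q}(\rr d)$, and that the sum- and intersection-space quasi-norms continue to behave correctly when some of the indices $p,q$ fall in $(0,1)$. These issues are by now standard in the modulation-space Strichartz literature cited in the introduction (cf.\ \cite{CorNic,Bhi,BhiBalTha}), so once they are combined with the integrability window $\alpha r_0<1$ the theorem follows by a routine assembly.
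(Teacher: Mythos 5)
Your argument is correct and follows essentially the same route as the paper: Theorem \ref{Thm:StrichEstMod2Intro} is obtained there as a special case of Theorem \ref{Thm:StrichEstMod2}, whose (omitted) proof rests on exactly the fixed-time bounds of Theorems \ref{Thm:FrFTModSpec1} and \ref{Thm:FrFTModSpec2} via the identification $e^{-itH_x}=e^{-itd}\mascF_{4t/\pi}$, combined with the integrability of $|\sin (2t)|^{-\alpha r_0}$ and $|\cos (2t)|^{-\alpha r_0}$ on $[0,T]$. The only cosmetic difference is that the paper states the endpoint $1/r_0=\alpha$ in weak $L^{r_0}$ and deduces the strict-inequality case by embedding, whereas you integrate directly in the sub-endpoint regime $\alpha r_0<1$; both rest on the same estimates and the same elementary computation.
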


\par

Another application of Proposition \ref{Prop:ConseqMainThms}
in combination of the Hardy-Littlewood-Sobolev inequality
leads to the following special case of
Theorem \ref{Thm:StrichEstMod1} in Section \ref{sec4}.

\par

\begin{thm}\label{Thm:StrichEstMod1Intro}
Let $p,p_0,q\in (1,\infty ]$ and
$r_0\in (0,\infty )$ be such that
\begin{equation*}
0\le d\left (\frac 1q-\frac 1p\right )
<1,
\quad
d\left (\frac 1q-\frac 1p\right )
\le 1+\frac 1{r_0}-\frac 1{p_0}.
\end{equation*}
Then $S_1$ and $S_2$ from $C([0,T];M^1(\rr d))$ to
$L^\infty ([0,T];M^1(\rr d))$
are uniquely extendable to
continuous mappings
\begin{alignat*}{2}
S_j &:\, & L^{p_0}([0,T];M^{p,q}(\rr d)+W^{q,p}(\rr d))
&\to
L^{r_0}([0,T];M^{q,p}(\rr d)\bigcap W^{p,q}(\rr d)),
\end{alignat*}
$j=1,2$.
\end{thm}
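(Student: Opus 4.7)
The strategy is to combine pointwise-in-time operator bounds on the propagator, coming directly from the fractional-Fourier-transform estimates listed between Proposition \ref{Prop:ConseqMainThms} and the subsequent discussion, with the scalar Hardy-Littlewood-Sobolev inequality. Setting $\alpha := d(1/q-1/p)\in[0,1)$ and recalling the identification \eqref{Eq:IntroFracFourHarmOscProp} of $e^{-itH_x}$ with a fractional Fourier transform of order $4t/\pi$, the four displayed sine/cosine estimates together with the definitions of the sum and intersection norms yield
\[
\|e^{-i(t-s)H_x} G\|_{M^{q,p}\cap W^{p,q}}
\;\lesssim\; K(t-s)\,\|G\|_{M^{p,q}+W^{q,p}},
\qquad
K(\tau):=|\sin(2\tau)|^{-\alpha}+|\cos(2\tau)|^{-\alpha},
\]
for every $G\in M^{p,q}(\rr d)+W^{q,p}(\rr d)$ and a.e. $t,s\in[0,T]$.

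Next, I would apply Minkowski's integral inequality inside \eqref{Eq:DefS1ROpIntro} and \eqref{Eq:DefS2ROpIntro} to reduce matters to the scalar convolution bound
\[
\|(S_jF)(t,\cdot )\|_{M^{q,p}\cap W^{p,q}}
\;\le\;
\int_0^T K(t-s)\,\|F(s,\cdot )\|_{M^{p,q}+W^{q,p}}\,ds,
\qquad j=1,2.
\]
Since $\alpha<1$, the kernel $K$ is locally integrable on $\mathbf R$ with only finitely many singularities on $[-T,T]$, each of type $|\cdot|^{-\alpha}$; dominating $K$ on $[-T,T]$ by a finite sum of translates of $|\cdot|^{-\alpha}$, the Hardy-Littlewood-Sobolev inequality gives continuity of convolution by $K$ as a map $L^{p_0}([0,T])\to L^{\tilde r_0}([0,T])$ with $1/\tilde r_0=1/p_0+\alpha-1$, provided this is positive; the remaining sub-cases ($\alpha=0$, or $\alpha+1/p_0-1\le 0$) would be handled by Young's and H\"older's inequalities, respectively. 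The second hypothesis $\alpha\le 1+1/r_0-1/p_0$ gives $\tilde r_0\ge r_0$, so the finite-interval embedding $L^{\tilde r_0}([0,T])\hookrightarrow L^{r_0}([0,T])$ closes the estimate.

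Uniqueness of the extension follows from the density of $C([0,T];M^1(\rr d))$ in $L^{p_0}([0,T];M^{p,q}(\rr d)+W^{q,p}(\rr d))$, itself a consequence of the continuous embedding $M^1(\rr d)\hookrightarrow M^{p,q}(\rr d)\cap W^{q,p}(\rr d)$ combined with the standard density of continuous functions in Bochner spaces over a finite interval. The main technical obstacle will be the interaction between the sum/intersection norm structure and the $ds$ integration: bringing the infimum over decompositions $F(s,\cdot )=F_1(s,\cdot )+F_2(s,\cdot )$ through the time integration requires a measurable near-optimal selection, which is handled by the standard measurable selection argument available for sums of quasi-Banach spaces and for which the quasi-Banach setting of modulation spaces at $p,q\in(1,\infty]$ causes no difficulty.
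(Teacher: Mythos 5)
Your proposal follows essentially the same route as the paper: the fixed-time sine/cosine bounds of Theorems \ref{Thm:FrFTModSpec1} and \ref{Thm:FrFTModSpec2}, Minkowski's integral inequality to reduce to a scalar convolution against a kernel with finitely many singularities of type $|\cdot |^{-\alpha}$ on $[-T,T]$, and the Hardy--Littlewood--Sobolev inequality (with Young/H{\"o}lder in the degenerate cases) followed by the embedding $L^{\tilde r_0}[0,T]\hookrightarrow L^{r_0}[0,T]$, which is precisely the content of Lemma \ref{Lemma:HardyLittlewoodSobolev} and the proof of Theorem \ref{Thm:StrichEstMod1}. The only cosmetic differences are that you bundle the sum/intersection norms into a single combined kernel and explicitly flag the measurable-selection point for decompositions $F=F_1+F_2$, whereas the paper proves the four component mappings \eqref{Eq:StrichEstMod1}--\eqref{Eq:StrichEstMod4} separately and leaves the passage to sums and intersections implicit.
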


\par

In Section \ref{sec4}
we also discuss well-posed properties for more
general equations, where $H_x$ in
\eqref{Eq:SchrEqParPot}
is replaced by $\zeta H_{x,\varrho}^r$ for some
$\zeta \in \mathbf C$, $\varrho \in \cc d$
and $r>0$ which satisfy
$\IM (\zeta \varrho _j)>0$ for some $j$.
Here we show that
such equations are ill-posed, not only
in the framework of Schwartz functions
and tempered distributions, but also for
Gelfand-Shilov functions and distributions, 
while the equation is well-posed for
suitable Pilipovi{\'c} spaces and their
distributions.

\par

\section*{Acknowledgement}

\par

The authors are grateful to Marianna Chatzakou,
Maurice de Gosson and Alberto Parmeggiani for valuable
comments.

\par

The first author was supported by Vetenskapsr{\aa}det
(Swedish Science Council), within the project 2019-04890.
The second author is thankful for the research grants
(DST/INSPIRE/04/2016/001507)
and the third author is thankful for the research grants
(DST/INSPIRE/04/2019/001914).

\par

\section{Preliminaries}\label{sec1} 

\par

In this section we recall some basic facts. We start by 
discussing Gelfand-Shilov spaces, then modulation
spaces and thereafter Pilipovi{\'c} spaces and some of
their properties. Then
we recall the Bargmann transform and some of its
properties, and introduce suitable classes
of power series expansions and entire functions on
$\cc d$. Finally, in Subsection \ref{subsec1.8}
we recall some facts on Strichartz estimates.

\par

\subsection{Gelfand-Shilov spaces
and their distribution spaces}\label{subsec1.1}

\par

We start by recalling definitions of Fourier invariant (standard) Gelfand-Shilov
spaces and their distribution spaces (cf. e.{\,}g. \cite{GeSh,Pil2}). 
Let $s\ge 0$ and $h\in \mathbf R_+$ be fixed. Then $\mathcal S_{s,h}(\rr d)$
is the set of all $f\in C^\infty (\rr d)$ such that
\begin{equation*}
\nm f{\mathcal S_{s,h}}\equiv \sup \frac {|x^\beta \partial ^\alpha
f(x)|}{h^{|\alpha | + |\beta |}(\alpha !\, \beta !)^s}
\end{equation*}
is finite. Here the supremum is taken over all $\alpha ,\beta \in
\nn d$ and $x\in \rr d$.

\par

Obviously $\mathcal S_{s,h}\subseteq
\mathscr S$ is a Banach space which increases with $h$ and $s$.

\par

The \emph{Gelfand-Shilov space} $\maclS _s(\rr d)$ ($\Sigma _s(\rr d)$)
of Roumieu type (Beurling type) is the projective limit (projective limit)
of $\mathcal S_{s,h}(\rr d)$ with respect to $h$. This implies that
\begin{equation}\label{GSspacecond1}
\maclS _s(\rr d) = \bigcup _{h>0}\maclS _{s,h}(\rr d)
\quad \text{and}\quad
\Sigma _{s}(\rr d) =\bigcap _{h>0}\maclS _{s,h}(\rr d)
\end{equation}
is a so-called LB-space and Fr{\'e}chet space, respectively, with
semi norms $\nm \cdo{\maclS _{s,h}}$,
$h>0$. 

\medspace

Let $\maclS _{s,h}'(\rr d)$ be the ($L^2$-)dual of
$\maclS _{s,h}(\rr d)$.
If $s\ge \frac 12$ ($s>\frac 12$), then the
\emph{Gelfand-Shilov distribution space}
$\maclS _s'(\rr d)$ ($\Sigma _s'(\rr d)$)
is the projective limit (inductive limit) of
$\mathcal S_{s,h}'(\rr d)$ with respect to $h>0$.
Hence
\begin{equation}\tag*{(\ref{GSspacecond1})$'$}
\maclS _s'(\rr d) = \bigcap _{h>0}\maclS _{s,h}'(\rr d)
\quad \text{and}\quad
\Sigma _{s}'(\rr d) =\bigcup _{h>0}\maclS _{s,h}'(\rr d).
\end{equation}
We remark that \eqref{Eq:SpacesInclusions} is true with dense
embeddings, and that $\maclS _s'(\rr d)$ and
$\Sigma _t'(\rr d)$ are the (strong) duals of
$\maclS _s(\rr d)$ and $\Sigma _t(\rr d)$. On the other hand,
if $s<t\le \frac 12$, then $\maclS _s(\rr d)$ and $\Sigma _t(\rr d)$
are trivially equal to $\{ 0\}$ (cf. \cite{GeSh,Pil1,Pil2}).

\par

From now on we let $\mathscr F$ be the Fourier transform,
given by
$$
(\mathscr Ff)(\xi )= \widehat f(\xi ) \equiv (2\pi )^{-\frac d2}\int _{\rr
{d}} f(x)e^{-i\scal  x\xi }\, dx
$$
when $f\in L^1(\rr d)$. Here $\scal \cdo \cdo$ denotes the
usual scalar product on $\rr d$. The map $\mathscr F$ extends 
uniquely to homeomorphisms on $\mathscr S'(\rr d)$,
$\maclS _s'(\rr d)$ and $\Sigma _s'(\rr d)$, and restricts to
homeomorphisms on
$\mathscr S(\rr d)$, $\maclS _s(\rr d)$ and $\Sigma _s(\rr d)$,
and to a unitary operator on $L^2(\rr d)$.

\medspace

Next we recall some mapping properties of Gelfand-Shilov
spaces under short-time Fourier transforms.
Let $\phi \in \mathscr S(\rr d)$ be fixed. For every $f\in
\mathscr S'(\rr d)$, the \emph{short-time Fourier transform} $V_\phi
f$ is the distribution on $\rr {2d}$ defined by the formula
\begin{equation}\label{defstft}
(V_\phi f)(x,\xi ) =\mathscr F(f\, \overline{\phi (\cdo -x)})(\xi ) =
(f,\phi (\cdo -x)e^{i\scal \cdo \xi}).
\end{equation}
We recall that if $T(f,\phi )\equiv V_\phi f$ when $f,\phi \in \maclS _s(\rr d)$,
then $T$ is uniquely extendable to sequentially continuous mappings
\begin{alignat*}{2}
T\, &:\, & \maclS _s'(\rr d)\times \maclS _s(\rr d) &\to
\maclS _s '(\rr {2d})\bigcap C^\infty (\rr {2d}),
\\[1ex]
T\, &:\, & \maclS _s'(\rr d)\times \maclS _s'(\rr d) &\to
\maclS _s'(\rr {2d}),
\end{alignat*}
and similarly with $\Sigma _s$ in place of $\maclS _s$ at
each occurrence
(cf. \cite{CPRT10,Toft18}).
We also note that $V_\phi f$ takes the form
\begin{equation}\tag*{(\ref{defstft})$'$}
V_\phi f(x,\xi ) =(2\pi )^{-\frac d2}\int _{\rr d}f(y)
\overline {\phi (y-x)}e^{-i\scal y\xi}\, dy
\end{equation}
for admissible $f$. 

\par

There are several characterizations of Gelfand-Shilov
spaces and their distribution spaces, e.{\,}g. 
by suitable estimates of their Fourier
and Short-time Fourier transforms (cf.
\cite{ChuChuKim,GroZim,Toft18}).

\par

\subsection{Spaces of sequences}\label{subsec1.2}

\par

The definitions of Pilipovi{\'c} spaces and spaces of
power series expansions are based on certain spaces
of sequences on $\nn d$, indexed by the extended set
$$
\mathbf R_\flat = \mathbf R_+\bigcup
\sets {\flat _\sigma}{\sigma \in \mathbf R_+},
$$
of $\mathbf R_+$. We extend the ordering relation on
$\mathbf R_+$ to the set $\mathbf R_\flat$, by letting
$$
s_1<\flat _\sigma <s_2
\quad \text{and}\quad \flat _{\sigma _1}<\flat _{\sigma _2}
$$
when $s_1,s_2,\sigma _1,\sigma _2\in \mathbf R_+$,
satisfy
$s_1<\frac 12\le s_2$ and $\sigma _1<\sigma _2$.
(Cf. \cite{Toft18}.)

\par

\begin{defn}\label{Def:SeqSpaces}
Let $s\in \mathbf R_\flat$ and $r,\sigma \in \mathbf R_+$.
\begin{enumerate}
\item The set $\ell _0'(\nn d)$ consists of all formal
sequences $a=\{ a(\alpha )\}_{\alpha \in \nn d}
\subseteq \mathbf C$,
and $\ell _0(\nn d)$ is the set of all $a\in \ell _0'(\nn d)$
such that $a(\alpha )\neq 0$ for at most finite numbers
of $\alpha \in \nn d$;

\vrum

\item The Banach spaces $\ell _{s;r}^\infty (\nn d)$ and
$\ell _{s;r}^{\infty ,*} (\nn d)$
consist of all $a\in \ell _0'(\nn d)$ such that
their corresponding norms
\begin{align*}
\nm a{\ell _{s;r}^\infty}
&=
\begin{cases}
\underset {\alpha \in \nn d}\sup  |a(\alpha )e^{r|\alpha |^{\frac 1{2s}}}|,
& s\in \mathbf R_+,
\\[1ex]
\underset {\alpha \in \nn d} \sup  |a(\alpha )
r^{-|\alpha |} \alpha !^{\frac 1{2\sigma }}|, & s=\flat _\sigma ,
\end{cases}
\intertext{and}
\nm a{\ell _{s;r}^{\infty ,*}}
&=
\begin{cases}
\underset {\alpha \in \nn d}\sup  |a(\alpha )
e^{-r|\alpha |^{\frac 1{2s}}}|, & s\in \mathbf R_+,
\\[1ex]
\underset {\alpha \in \nn d} \sup  |a(\alpha )
r^{-|\alpha |} \alpha !^{-\frac 1{2\sigma }}|, &
s=\flat _\sigma .
\end{cases}
\end{align*}
respectively, are finite;

\vrum

\item The space $\ell _s(\nn d)$ ($\ell _{0,s}(\nn d)$) is the
inductive limit (projective limit) of $\ell _{s;r}^\infty (\nn d)$
with respect to $r>0$, and $\ell _s'(\nn d)$
($\ell _{0,s}'(\nn d)$) is the
projective limit (inductive limit) of
$\ell _{s;r}^{\infty ,*} (\nn d)$
with respect to $r>0$.
\end{enumerate}
\end{defn}

\par

We also let $\nm \cdo {\ell _{0;N}}$ be the semi-norm
\begin{equation}\label{Eq:SeqBasicSemiNorm}
\nm a{\ell _{0;N}} \equiv \sup _{|\alpha |\le N}|a(\alpha )|,
\qquad
a\in \ell _0'(\nn d),
\end{equation}
and $\ell _{0;N}(\nn d)$ be the Banach space with norm
\eqref{Eq:SeqBasicSemiNorm} and
consisting of all $a\in \ell _0'(\nn d)$
such that $a(\alpha )=0$ when $|\alpha |\ge N$. Then
$\ell _0(\nn d)$ is the inductive limit of
$\ell _{0;N}(\nn d)$ with respect
to $N$, and $\ell _0'(\nn d)$ is a Fr{\'e}chet
space under the semi-norms
(Cf. \cite{Toft18}.)

\par

In what follows, $(\cdo ,\cdo )_{\mascH}$ denotes the scalar
product in the Hilbert space $\mascH$.

\par

\begin{rem}\label{Rem:SeqSpaces}
Let $s\in \overline{\mathbf R_\flat}$. Then the duals of
\begin{equation}\label{Eq:DiscPilSpaces}
\ell _{0,s} (\nn d), \quad
\ell _s(\nn d), \quad 
\ell _s'(\nn d)\quad \text{and}\quad
\ell _{0,s}'(\nn d)
\end{equation}
are given by
$$
\ell _{0,s}' (\nn d), \quad
\ell _s'(\nn d), \quad 
\ell _s(\nn d)\quad \text{and}\quad
\ell _{0,s}(\nn d),
$$
respectively, with respect to unique extensions of the form
$(\cdo ,\cdo )_{\ell ^2(\nn d)}$ on $\ell _0(\nn d)\times \ell _0(\nn d)$.
If $s>0$, then $\ell _0(\nn d)$ is dense in $\ell _0'(\nn d)$ and the spaces
in \eqref{Eq:DiscPilSpaces}. (See e.{\,}g. \cite{Toft18})
\end{rem}

\par

\subsection{Pilipovi{\'c} spaces and spaces of power series expansions
on $\cc d$}\label{subsec1.3}

\par

We recall that
the Hermite function of order $\alpha \in \nn d$ is defined by
$$
h_\alpha (x) = \pi ^{-\frac d4}(-1)^{|\alpha |}
(2^{|\alpha |}\alpha !)^{-\frac 12}e^{\frac 12\cdot {|x|^2}}
(\partial ^\alpha e^{-|x|^2}).
$$
It follows that
$$
h_{\alpha}(x)=   ( (2\pi )^{\frac d2} \alpha ! )^{-1}
e^{-\frac 12\cdot {|x|^2}}p_{\alpha}(x),
$$
for some polynomial $p_\alpha$ of order $\alpha$
on $\rr d$,
called the Hermite polynomial of order $\alpha$. 
The Hermite functions are eigenfunctions to the Fourier transform, and
to the Harmonic oscillator
\begin{equation}\label{Eq:HarmOscDef}
H_{x,c}\equiv H_x+c ,\qquad H_x\equiv |x|^2-\Delta _x,
\qquad x\in \rr d,
\end{equation}
which acts on functions and (ultra-)distributions defined
on $\rr d$. Here $c\in \mathbf C$ is fixed. More precisely, we have
\begin{equation}\label{Eq:HarmOscHermFunc}
H_{x,c}h_\alpha = (2|\alpha |+d+c)h_\alpha .
\end{equation}

\par

More generally, for any $c\in \mathbf C$ and
$\varrho =(\varrho _1,\dots ,\varrho _d)\in \cc d$,
we let
\begin{equation}\label{Eq:HarmOscDefExt}
H_{x,\varrho ,c}
\equiv 
\left (
\sum _{j=1}^d \varrho _j(x_j^2-\partial _{x_j}^2)
\right )+c
=
\left (
\sum _{j=1}^d \varrho _jH_{x_j}
\right )+c,
\qquad x\in \rr d.
\end{equation}
Evidently, $H_{x,\varrho ,c}$ is positive definite when
$$
\varrho \in \rr d_+
\quad \text{and}\quad
c>-\sum _{j=1}^d\varrho _j.
$$
For conveniency we put
$H_{x,\varrho ,c}=H_{x,\varrho _0,c}$ when
$\varrho =(\varrho _0,\dots ,\varrho _0)\in \cc d$,
and observe that
$$
H_{x,\varrho ,c} = \varrho H_x +c
\quad \text{when}\quad
\varrho \in \mathbf C.
$$

\par

It is well-known that
the set of Hermite functions is a basis for $\mascS (\rr d)$ 
and an orthonormal basis for $L^2(\rr d)$ (cf. \cite{ReSi}).
In particular, if $f,g\in L^2(\rr d)$, then
$$
\nm f{L^2(\rr d)}^2 = \sum _{\alpha \in \nn d}|c_h(f,\alpha )|^2
\quad \text{and}\quad
(f,g)_{L^2(\rr d)} = \sum _{\alpha \in \nn d}c_h(f,\alpha )
\overline{c_h(g,\alpha )},
$$
where
\begin{align}
f(x) &= \sum _{\alpha \in \nn d}c_h(f,\alpha )h_\alpha (x)
\label{Eq:HermiteExp}
\intertext{is the Hermite seriers expansion of $f$, and}
c_h(f,\alpha ) &= (f,h_\alpha )_{L^2(\rr d)}
\label{Eq:HermiteCoeff}
\end{align}
is the Hermite coefficient of $f$ of order $\alpha \in \rr d$.

\par

We let $\maclH _0'(\rr d)$ be the set of all formal Hermite series
expansions in \eqref{Eq:HermiteExp}, and $\maclA _0'(\cc d)$
be the set of all formal power series expansions
\begin{equation}\label{Eq:PowerSeriesExp}
F(z) = \sum _{\alpha \in \nn d}c(F,\alpha )e_\alpha (z),\qquad
e_\alpha (z)= \frac {z^\alpha}{\sqrt {\alpha !}},\ \alpha \in \nn d,
\end{equation}
on $\cc d$. Then the map
\begin{alignat}{2}
T_{\maclH} \, &: &\, \{ c(\alpha )\} _{\alpha \in \nn d}
&\mapsto \sum _{\alpha \in \nn d} c(\alpha )h_\alpha
\label{Eq:THMap}
\intertext{is bijective from $\ell _0'(\nn d)$ to $\maclH _0'(\rr d)$, and}
T_{\maclA} \, &: &\, \{ c(\alpha )\} _{\alpha \in \nn d}
&\mapsto \sum _{\alpha \in \nn d} c(\alpha )e_\alpha
\label{Eq:TAMap}
\end{alignat}
is bijective from $\ell _0'(\nn d)$ to
$\maclA _0'(\cc d)$. We let the topologies
of $\maclH _0'(\rr d)$ and $\maclA _0'(\cc d)$
be inherited from $\ell _0'(\nn d)$
through the mappings $T_{\maclH}$ and
$T_{\maclA}$, respectively.

\par

\begin{defn}\label{Def:PilPowerSpaces}
Let $s\in \overline{\mathbf R_\flat}$.
\begin{enumerate}
\item The spaces
\begin{equation}\label{Eq:Pilspaces}
\maclH _{0,s}(\rr d), \quad \maclH _s(\rr d), \quad
\maclH _s'(\rr d)
\quad \text{and}\quad
\maclH _{0,s}'(\rr d),
\end{equation}
and their topologies, are the images under
the map $T_{\maclH}$ of the spaces
and their topologies in \eqref{Eq:DiscPilSpaces}, respectively.

\vrum

\item  The spaces
\begin{equation}\label{Eq:AnalPilspaces}
\maclA _{0,s}(\cc d), \quad \maclA _s(\cc d), \quad
\maclA _s'(\cc d)
\quad \text{and}\quad
\maclA _{0,s}'(\cc d),
\end{equation}
and their topologies, are the images under
the map $T_{\maclA}$ of the spaces
and their topologies in \eqref{Eq:DiscPilSpaces}, respectively.
\end{enumerate}
\end{defn}

\par

The spaces $\maclH _s(\rr d)$ and $\maclH _{0,s}(\rr d)$ in Definition
\ref{Def:PilPowerSpaces} are called
\emph{Pilipovi{\'c} spaces} of \emph{Roumieu} respectively
\emph{Beurling types} of order $s$,
and
$\maclH _s'(\rr d)$ and $\maclH _{0,s}'(\rr d)$ are called
\emph{Pilipovi{\'c} distribution spaces} of \emph{Roumieu} respectively
\emph{Beurling types} of order $s$.

\par

There are several characterizations of Pilipovi{\'c} spaces, e.{\,}g.
in terms of estimates of powers of the harmonic oscillator on
the involved functions (see e.{\,}g. 
\cite{AbFeGaToUs,FeGaTo,Toft18}).

\par

\begin{rem}\label{Rem:IdentPilSpaces}
Let $s_1,s_2\in \overline{\mathbf R}_\flat$.
For future references we recall that
\begin{equation}\label{Eq:NonTrivialGS}
\begin{alignedat}{3}
\maclH _{s_1}(\rr d) &= \maclS _{s_1}(\rr d), &
\quad
\maclH _{s_1}'(\rr d) &= \maclS _{s_1}'(\rr d), &
\quad
s_1 &\ge \frac 12,
\\[1ex]
\maclH _{0,s_2}(\rr d) &= \Sigma _{s_2}(\rr d), &
\quad
\maclH _{0,s_2}'(\rr d) &= \Sigma _{s_2}'(\rr d), &
\quad
s_2 &> \frac 12,
\end{alignedat}
\end{equation}
while for the other choices of $s_1,s_2$ we have
\begin{equation}\label{Eq:TrivialGS}
\begin{alignedat}{3}
\maclH _{s_1}(\rr d) &\neq \maclS _{s_1}(\rr d)=\{ 0\} , &
\qquad
s_1 &< \frac 12,
\\[1ex]
\maclH _{0,s_2}(\rr d) &\neq \Sigma _{s_2}(\rr d)=\{ 0\} , &
\qquad
0<s_2 &\le \frac 12,
\end{alignedat}
\end{equation}
and that $\maclH _{s_1}(\rr d)$ and $\maclH _{0,s_2}(\rr d)$
in \eqref{Eq:NonTrivialGS} and \eqref{Eq:TrivialGS}
are dense in $\mascS (\rr d)$.
(See e.{\,}g. \cite{Pil2,Toft18}.)

\par

Hence, any non-trivial Gelfand-Shilov space and
its distribution space, agree with corresponding
Pilipovi{\'c} space and its distribution space. In
particular, Gelfand-Shilov spaces and their
distribution spaces can be characterized in
similar ways as Pilipovi{\'c} spaces and 
their distribution spaces in terms of estimates of
their coefficients in their Hermite function expansions.

\par

In this context we also recall that the Schwartz space
and the set of tempered distributions can be characterized
as
\begin{alignat}{4}
f&\in \mascS (\rr d) &
\quad &\Leftrightarrow & \quad
|c_h(f,\alpha )| &\lesssim \eabs \alpha ^{-N} &
\ &\text{for every}\ N\ge 0
\label{Eq:SchwSpHermChar}
\intertext{and}
f&\in \mascS '(\rr d) &
\quad &\Leftrightarrow & \quad
|c_h(f,\alpha )| &\lesssim \eabs \alpha ^{N} &
\ &\text{for some}\ N\ge 0.
\label{Eq:TempDistHermChar}
\end{alignat}
(See e.{\,}g. \cite{ReSi}). Here and in what follows we
let
$$
\eabs x = (1+|x|^2)^{\frac 12}.
$$
\end{rem}

\par

\subsection{Weight functions}\label{subsec1.4}

\par

Next we recall some facts on weight
functions. A \emph{weight} on $\rr d$ is a positive function $\omega
\in  L^\infty _{loc}(\rr d)$ such that $1/\omega \in  L^\infty _{loc}(\rr d)$.
The set of weights on $\rr d$ is denoted by $\mascP _{\! A}(\rr d)$.
In the sequel we usually assume that $\omega$ is \emph{moderate},
or \emph{$v$-moderate} for some positive function $v \in
 L^\infty _{loc}(\rr d)$. This means that
\begin{equation}\label{moderate}
\omega (x+y) \lesssim \omega (x)v(y),\qquad x,y\in \rr d.
\end{equation}
Here $A\lesssim B$ means that $A\le cB$
for a suitable constant $c>0$, and we
write $A\asymp B$
when $A\lesssim B$ and $B\lesssim A$. 
We note that \eqref{moderate} implies that $\omega$ fulfills
the estimates
\begin{equation}\label{moderateconseq}
v(-x)^{-1}\lesssim \omega (x)\lesssim v(x),\quad x\in \rr d.
\end{equation}
We let $\mascP _E(\rr d)$ be the sets of all moderate
weights on $\rr d$.

\par

In several situations we also deal with
weights which are radial symmetric in each phase space
variable $(x_j,\xi _j)$. The set of such weights is denoted by
$\mascP _{\! A,r}(\rr {2d})$. That is, $\mascP _{\! A,r}(\rr {2d})$
consists of all $\omega \in \mascP _{\! A}(\rr {2d})$
such that $\omega (x,\xi )=\omega _0(\rho )$
for some $\omega _0\in \mascP _{\! A}(\rr d)$,
where $\rho _j=x_j^2+\xi _j^2$.

\par

It can be proved that if $\omega \in \mascP _E(\rr d)$, then
$\omega$ is $v$-moderate for some $v(x) = e^{r|x|}$, provided the
positive constant $r$ is chosen large enough (cf. \cite{Gro3}). In particular,
\eqref{moderateconseq} shows that for any $\omega \in \mascP
_E(\rr d)$, there is a constant $r>0$ such that
\begin{equation}\label{WeightExpEst}
e^{-r|x|}\lesssim \omega (x)\lesssim e^{r|x|},\quad x\in \rr d.
\end{equation}

\par

We also let $\mascP (\rr d)$ be the set of all weights $\omega$
on $\rr d$ such that $\omega$ is moderated by
$v(x,\xi )=(1+|x|+|\xi |)^{r}$, for some $r\ge 0$.
Evidently, $\mascP (\rr d)\subseteq \mascP _E(\rr d)$.

\par

We say that $v$ is
\emph{submultiplicative} if $v$ is \emph{even} and \eqref{moderate}
holds with $\omega =v$. In the sequel, $v$ always stand for a
submultiplicative weight if nothing else is stated.

\par

\subsection{Modulation spaces and Wiener
amalgam spaces}\label{subsec1.5}

\par

Before defining modulation spaces we first address
some notions
on mixed norm spaces of Lebesgue types.
Let $p,q\in (0.\infty ]$ and
$r=\min (p,q)$. For any $f\in L^r_{\loc}(\rr {2d})$, let
\begin{alignat*}{3}
\nm f{L^{p,q}} = \nm f{L^{p,q}(\rr {2d})} &\equiv \nm {g_{1,f,p}}{L^q(\rr d)}, &
\quad &\text{where} & \quad
g_{1,f,p}(\xi ) &\equiv \nm {f(\cdo ,\xi )}{L^p(\rr d)}
\intertext{and}
\nm f{L^{p,q}_*} = \nm f{L^{p,q}_*(\rr {2d})} &\equiv \nm {g_{2,f,q}}{L^p(\rr d)}, &
\quad &\text{where} & \quad
g_{2,f,q}(x) &\equiv \nm {f(x,\cdo )}{L^q(\rr d)}.
\end{alignat*}
We also let $L^{p,q}(\rr {2d})$ and $L^{p,q}_*(\rr {2d})$
be the quasi-Banach spaces which consist of all $f\in L^r_{\loc}(\rr {2d})$
such that $\nm f{L^{p,q}}$ and $\nm f{L^{p,q}_*}$ are finite, respectively.

\par

Let $\phi \in \Sigma _1(\rr d)\setminus 0$, $p,q\in (0.\infty ]$
and $\omega \in \mascP _E(\rr {2d})$. Then the modulation
spaces $M^{p,q}_{(\omega )}(\rr d)$
and $W^{p,q}_{(\omega )}(\rr d)$ consist of
all $f\in \Sigma _1'(\rr d)$ such that $V_\phi f\cdot \omega$
belongs to $L^{p,q}(\rr {2d})$ respectively $L^{p,q}_*(\rr {2d})$.
We equip $M^{p,q}_{(\omega )}(\rr d)$ and $W^{p,q}_{(\omega )}(\rr d)$
with the quasi-norms
\begin{equation}\label{modnorm2}
f\mapsto \nm f{M^{p,q}_{(\omega )}}
\equiv \nm {V_\phi f\cdot \omega}{L^{p,q}}
\quad \text{and}\quad
f\mapsto \nm f{W^{p,q}_{(\omega )}}
\equiv \nm {V_\phi f\cdot \omega}{L^{p,q}_*},
\end{equation}
respectively. For conveniency we also set
$M^p_{(\omega )}=M^{p,p}_{(\omega )}$, and
remark that $M^{p,q}_{(\omega )}(\rr d)$ 
is one of the most common types of modulation spaces. We also set
$M^{p,q}=M^{p,q}_{(\omega)},$ and $M^{p}=M^{p}_{(\omega)}$ when
$\omega =1$, and similarly for $W^{p,q}_{(\omega)}$ spaces.

\par

Modulation spaces with $\omega \in \mascP (\rr {2d})$
were introduced by Feichtinger in \cite{Fei1}. The theory was
thereafter extended and generalized in several ways (see e.{\,}g.
\cite{Fei6,FeiGro1,FeiGro2,GaSa}).
%
%
%
In the following proposition we list some basic properties for modulation spaces.
We refer to
\cite{Fei1,FeiGro1,GaSa, Gc1,Toft2} for the proof.

\par

\begin{prop}\label{p1.4B}
Let $r\in (0,1]$, $p,p_j,q_j\in (0,\infty ]$ and $\omega ,\omega _j,v\in
\mascP  _{E}(\rr {2d})$, $j=1,2$, be such that $r\le p,q$,
$p _1\le p _2$, $q_1\le q_2$,  $\omega _2\lesssim \omega _1$, and
let $\omega$ be $v$-moderate. Then the following is true:
\begin{enumerate}
%
%
\item[{\rm{(1)}}] $\Sigma _1(\rr d)\subseteq
M^{p,q}_{(\omega )}(\rr d), W^{p,q}_{(\omega )}(\rr d)
\subseteq \Sigma _1'(\rr d)$ with
continuous inclusions. If in addition $p,q<\infty$, then
$\Sigma _1(\rr d)$ is dense in $M^{p,q}_{(\omega )}(\rr d)$
and $W^{p,q}_{(\omega )}(\rr d)$.
If, more restricted, $\omega \in \mascP (\rr {2d})$,
then similar facts hold true with $\mascS$ in place of $\Sigma _1$
at each occurrence;

\vrum

\item[{\rm{(2)}}] if $\phi \in M^r_{(v)}(\rr d)\setminus 0$, then
$f\in M^{p,q}_{(\omega )}(\rr d)$, if and only if
$\nm {V_\phi f\cdot \omega}{L^{p,q}}$ is finite, and
$f\in W^{p,q}_{(\omega )}(\rr d)$, if and only if
$\nm {V_\phi f\cdot \omega}{L^{p,q}_*}$ is finite.
In particular, $M^{p,q}_{(\omega )}(\rr d)$ and $W^{p,q}_{(\omega )}(\rr d)$
are independent
of the choice of $\phi \in M^r_{(v)}(\rr d)\setminus 0$.
Moreover, different choices of $\phi$ in \eqref{modnorm2}
give rise to equivalent quasi-norms;

\vrum

\item[{\rm{(3)}}] $M^{p _1,q_1}_{(\omega _1)}(\rr d)\subseteq
M^{p _2,q_2}_{(\omega _2)}(\rr d)$
and $W^{p _1,q_1}_{(\omega _1)}(\rr d)\subseteq
W^{p _2,q_2}_{(\omega _2)}(\rr d)$;

\vrum

\item[{\rm{(4)}}] if $\omega _0(\xi ,x) = \omega (-x,\xi )$, then
$\mascF$ is a homeomorphism from $M^{p,q}_{(\omega )}(\rr d)$
to $W^{q,p}_{(\omega _0)}(\rr d)$.
\end{enumerate}
\end{prop}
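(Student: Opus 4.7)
The plan is to address the four parts in the order (2), (1), (3), (4), since window invariance underpins everything else. For (2), the key tool is the fundamental identity of time-frequency analysis
$$
|V_{\phi _1}f(x,\xi )|\cdot \nm {\phi _2}{L^2}^2
\le
(|V_{\phi _2}f|*|V_{\phi _1}\phi _2|)(x,\xi ),
$$
combined with a Young-type convolution inequality for the weighted mixed-norm spaces $L^{p,q}(\rr {2d})$ and $L^{p,q}_*(\rr {2d})$, valid in the quasi-Banach range because of the hypothesis $r\le \min (p,q)$. Since $\phi \in M^r_{(v)}(\rr d)\setminus 0$ and $\omega$ is $v$-moderate, $|V_{\phi _1}\phi _2|\cdot v$ lies in $L^r(\rr {2d})$, giving uniform estimates under any change of window; this yields both the window independence and the equivalence of quasi-norms asserted in (2).

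For (1), having established (2) one may fix a convenient window $\phi \in \Sigma _1(\rr d)\setminus 0$. The continuity $\Sigma _1(\rr d)\hookrightarrow M^{p,q}_{(\omega )}(\rr d)\cap W^{p,q}_{(\omega )}(\rr d)$ then follows because $V_\phi f\in \Sigma _1(\rr {2d})$ whenever $f,\phi \in \Sigma _1(\rr d)$, and this sub-exponential decay dominates the at-most-exponential growth of $\omega \in \mascP _E(\rr {2d})$ guaranteed by \eqref{WeightExpEst}. For the reverse inclusion $M^{p,q}_{(\omega )}(\rr d)+W^{p,q}_{(\omega )}(\rr d)\hookrightarrow \Sigma _1'(\rr d)$, I would use the reconstruction formula, testing $V_\phi f$ against $V_\phi \psi$ for $\psi \in \Sigma _1(\rr d)$, and observe that the dual exponents produce an integrable pairing. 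Density of $\Sigma _1(\rr d)$ when $p,q<\infty$ is then handled by truncating $V_\phi f$ in phase space and applying the inversion formula. The parallel statement with $\mascS (\rr d)$ in place of $\Sigma _1(\rr d)$ under the stronger assumption $\omega \in \mascP (\rr {2d})$ is analogous.

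For (3), the sub-convolution identity from (2) combined with a compactly supported auxiliary window gives pointwise control of $|V_\phi f|$ by a local average, which after discretization on a lattice passes to the standard inclusion $\ell ^{p_1,q_1}\hookrightarrow \ell ^{p_2,q_2}$ at the level of Gabor coefficients; the weight hypothesis $\omega _2\lesssim \omega _1$ then finishes the argument. For (4), a direct computation with Parseval's formula yields
$$
V_{\widehat \phi}\widehat f(x,\xi ) = e^{-i\scal x\xi}V_\phi f(-\xi ,x),
$$
so the substitution $(x,\xi )\mapsto (-\xi ,x)$ interchanges the roles of the spatial $L^p$-norm and the frequency $L^q$-norm, and converts the $L^{p,q}$-norm with weight $\omega$ into the $L^{q,p}_*$-norm with weight $\omega _0(\xi ,x)=\omega (-x,\xi )$, which is exactly the claimed homeomorphism $M^{p,q}_{(\omega )}(\rr d)\to W^{q,p}_{(\omega _0)}(\rr d)$.

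The principal obstacle is to make the convolution and density arguments work uniformly in the quasi-Banach regime $r<1$, where the ordinary triangle inequality and Hahn-Banach duality are unavailable. One has to rely on the modified convolution embedding $L^r*L^{p,q}\hookrightarrow L^{p,q}$ (and its $L^{p,q}_*$ counterpart) valid for $r\le \min (p,q)$ in the quasi-Banach setting, and on the concrete Gabor-type sequence-space characterization of the modulation spaces to replace duality by direct sequence arguments. These technicalities are standard but require careful bookkeeping; detailed proofs in this generality can be found in \cite{Gc1,Toft2}.
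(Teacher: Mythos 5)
The paper does not prove Proposition \ref{p1.4B} at all; it simply refers to \cite{Fei1,FeiGro1,GaSa,Gc1,Toft2}, and your outline is precisely the standard argument carried out in those references (change-of-window inequality plus Young-type estimates for (2), sub-exponential decay of $V_\phi f$ for $f,\phi \in \Sigma _1$ against \eqref{WeightExpEst} for (1), nestedness of mixed-norm sequence spaces for (3), and the covariance identity $V_{\widehat \phi}\widehat f(x,\xi )=e^{-i\scal x\xi}V_\phi f(-\xi ,x)$ for (4)), so your approach agrees with the paper's intended one and is essentially correct. The only point to state more carefully is that the embedding $L^r*L^{p,q}\hookrightarrow L^{p,q}$ is \emph{false} for general functions when $\min (p,q)<1$; it must be applied either at the level of sequences ($\ell ^r*\ell ^{p,q}\subseteq \ell ^{p,q}$ for $r\le \min (1,p,q)$) or to functions satisfying a local maximal estimate such as STFTs with window in $M^r_{(v)}$, which is exactly the Galperin--Samarah device you invoke in your closing paragraph, so this is a matter of phrasing rather than a gap.
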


\par

\begin{rem}\label{Rem:GeneralModSpaces}
In the framework of Pilipovi{\'c} distribution spaces, the definition of
modulation spaces are extended in \cite{Toft18}
to include more general weights (which are not necessary moderate).
In these approaches the window function $\phi$ is fixed and equal to
the Gaussian $\phi (x)=\pi ^{-\frac d4}e^{-\frac 12|x|^2}$. For any
$\omega \in \mascP _{\! A}(\rr {2d})$ and $p,q\in (0,\infty ]$,
the modulation spaces $M^{p,q}_{(\omega )}(\rr d)$ and
$W^{p,q}_{(\omega )}(\rr d)$ consist of all $f\in \maclH _{\flat _1}'(\rr d)$
such that corresponding quasi-norms in \eqref{modnorm2}
are finite. It is proved in \cite{Toft18} that $M^{p,q}_{(\omega )}(\rr d)$ and
$W^{p,q}_{(\omega )}(\rr d)$ are \emph{quasi-Banach spaces}.
If in addition $p,q\ge 1$, then these spaces are \emph{Banach spaces}.
\end{rem}

\par

\subsection{Spaces of entire functions and
the Bargmann transform}\label{subsec1.6}

\par

Let $\Omega \subseteq \cc d$ be open.
Then
$A(\Omega )$ denotes the set of all
analytic functions in $\Omega$.

\par

Next we recall some properties of the Bargmann
transform (cf. \cite{Ba1,Ba2}).
We set
\begin{gather*}
\scal zw = \sum _{j=1}^dz_jw_j
\quad \text{and}\quad (z,w) = \scal z{\overline w}
,\quad \text{when}
\\[1ex]
z=(z_1,\dots ,z_d) \in \cc d
\quad \text{and} \quad
w=(w_1,\dots ,w_d)\in \cc d,
\end{gather*}
and otherwise $\scal \cdo \cdo $ denotes the
duality between test
function spaces and their corresponding duals.
The Bargmann transform $\mathfrak V_df$ of $f\in L^2(\rr d)$
is defined by the formula
\begin{equation}\label{Eq:BargmTransf}
(\mathfrak V_df)(z) =\pi ^{-\frac d4}\int _{\rr d}\exp \Big ( -\frac 12(\scal
z z+|y|^2)+2^{\frac 12}\scal zy \Big )f(y)\, dy
\end{equation}
(cf. \cite{Ba1}). We note that if $f\in
L^2(\rr d)$, then the Bargmann transform
$\mathfrak V_df$ of $f$ is the entire function on
$\cc d$, given by
$$
(\mathfrak V_df)(z) =\int _{\rr d}\mathfrak A_d(z,y)f(y)\, dy,
$$
or
\begin{equation}\label{bargdistrform}
(\mathfrak V_df)(z) =\scal f{\mathfrak A_d(z,\cdo )},
\end{equation}
where the Bargmann kernel $\mathfrak A_d$ is given by
$$
\mathfrak A_d(z,y)=\pi ^{-\frac d4} \exp \Big ( -\frac 12(\scal
zz+|y|^2)+2^{\frac 12}\scal zy\Big ).
$$
Evidently, the right-hand side in \eqref{bargdistrform}
makes sense when $f\in \maclS _{1/2}'(\rr d)$ and
defines an element in $A(\cc d)$, since
$y\mapsto \mathfrak A_d(z,y)$ can be interpreted
as an element in $\maclS _{1/2} (\rr d)$ with values
in $A(\cc d)$.

\par

It was proved in \cite{Ba1} that $f\mapsto
\mathfrak V_df$ is a bijective
and isometric map  from $L^2(\rr d)$ to the Hilbert
space $A^2(\cc d) \equiv B^2(\cc d)\cap A(\cc d)$,
where $B^2(\cc d)$ consists of all
measurable functions $F$ on $\cc  d$ such that
\begin{equation}\label{A2norm}
\nm F{B^2}
\equiv
\Big ( \int _{\cc d}|F(z)|^2d\mu (z)
\Big )^{\frac 12}<\infty .
\end{equation}
Here $d\mu (z)=\pi ^{-d} e^{-|z|^2}\, d\lambda (z)$, where 
$d\lambda (z)$ is the
Lebesgue measure on $\cc d$. We recall
that $A^2(\cc d)$ and $B^2(\cc d)$
are Hilbert spaces, where the scalar product are given by
\begin{equation}\label{A2scalar}
(F,G)_{B^2}
\equiv
\int _{\cc d} F(z)\overline {G(z)}\, d\mu (z),
\quad F,G\in B^2(\cc d).
\end{equation}
If $F,G\in A^2(\cc d)$, then we set
$\nm F{A^2}=\nm F{B^2}$
and $(F,G)_{A^2}=(F,G)_{B^2}$.

\par

In \cite{Ba1} it is also proved that
\begin{equation}\label{BargmannHermite}
\mathfrak V_dh_\alpha  = e_\alpha ,\quad \text{where}\quad
e_\alpha (z)\equiv \frac {z^\alpha}{\sqrt {\alpha !}},\quad z\in \cc d .
\end{equation}
In particular, the Bargmann transform maps the orthonormal basis
$\{ h_\alpha \}_{\alpha \in \nn d}$ in $L^2(\rr d)$ bijectively into the
orthonormal basis $\{ e_\alpha \}_{\alpha \in \nn d}$ of monomials
in $A^2(\cc d)$.

\par

For general $f\in \maclH _0'(\rr d)$ we now set
\begin{equation}\label{Eq:GeneralBargmannDef}
\mathfrak V_df \equiv (T_{\maclA}\circ T_{\maclH}^{-1})f,
\qquad f\in \maclH _0'(\rr d),
\end{equation}
where $T_{\maclH}$ and $T_{\maclA}$ are given by 
\eqref{Eq:THMap}
and \eqref{Eq:TAMap}. It follows from
\eqref{BargmannHermite} that $\mathfrak V_df$ in
\eqref{Eq:GeneralBargmannDef} agrees with
$\mathfrak V_df$ in \eqref{Eq:BargmTransf} when
$f\in L^2(\rr d)$, and that this is the only way to extend
the Bargmann transform continuously to a
continuous map from $\maclH _0'(\rr d)$ to
$\maclA _0'(\cc d)$. From these observations
and definitions, we get the following. The details
are left for the reader.

\par

\begin{prop}
Let $s\in \overline {\mathbf R_\flat}$. Then $\mathfrak V_d$
is a homeomorphism from $\maclH _0'(\rr d)$
to $\maclA _0'(\cc d)$, and restricts to homeomorphisms
from the spaces in \eqref{Eq:Pilspaces} to the spaces
in \eqref{Eq:AnalPilspaces}, respectively.
\end{prop}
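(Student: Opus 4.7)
The plan is to observe that the statement is essentially a formal consequence of the way the spaces involved have been defined, so I would simply chase the relevant diagrams rather than perform any analytic estimates.

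First, I would unpack the phrase ``topology inherited through the mappings $T_{\maclH}$ and $T_{\maclA}$'' used right after \eqref{Eq:TAMap}: by construction, $T_{\maclH} : \ell _0'(\nn d) \to \maclH _0'(\rr d)$ is bijective and is declared to be a homeomorphism, and Definition \ref{Def:PilPowerSpaces} fixes each space in \eqref{Eq:Pilspaces} as the $T_{\maclH}$-image of the corresponding sequence space in \eqref{Eq:DiscPilSpaces}, endowed with the transported topology. Hence $T_{\maclH}$ restricts to a homeomorphism between each such pair. Verbatim, the same holds for $T_{\maclA} : \ell _0'(\nn d) \to \maclA _0'(\cc d)$ and the spaces in \eqref{Eq:AnalPilspaces}.

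Next, I would invoke the extended definition \eqref{Eq:GeneralBargmannDef}, which sets $\mathfrak V_d = T_{\maclA}\circ T_{\maclH}^{-1}$ on $\maclH _0'(\rr d)$. As a composition of two homeomorphisms, $\mathfrak V_d$ is itself a homeomorphism from $\maclH _0'(\rr d)$ onto $\maclA _0'(\cc d)$. For each $s\in \overline{\mathbf R_\flat}$, the corresponding space in \eqref{Eq:Pilspaces} is, by definition, the $T_{\maclH}$-image of the appropriate entry of \eqref{Eq:DiscPilSpaces}, whose $T_{\maclA}$-image is exactly the matching entry of \eqref{Eq:AnalPilspaces}; thus $T_{\maclA}\circ T_{\maclH}^{-1}$ yields the claimed homeomorphism on each restriction.

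The only point worth checking carefully --- and it is not really an obstacle, more a sanity check --- is the compatibility of the abstract definition \eqref{Eq:GeneralBargmannDef} with the original integral formula \eqref{Eq:BargmTransf} on $L^2(\rr d)$, so that the two constructions genuinely agree on their common domain. This is supplied by \eqref{BargmannHermite}: expanding $f = \sum _\alpha c_h(f,\alpha )h_\alpha$ in $L^2(\rr d)$ and using continuity of the classical $\mathfrak V_d : L^2(\rr d)\to A^2(\cc d)$ together with $\mathfrak V_d h_\alpha = e_\alpha$ yields $\mathfrak V_d f = \sum _\alpha c_h(f,\alpha )e_\alpha = (T_{\maclA}\circ T_{\maclH}^{-1})f$. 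So no ambiguity arises, and the proposition essentially records the internal consistency of the setup built up in Subsections \ref{subsec1.2}, \ref{subsec1.3} and \ref{subsec1.6}.
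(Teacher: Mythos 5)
Your argument is correct and is exactly the route the paper intends: the paper states the result follows ``from these observations and definitions'' and leaves the details to the reader, and those details are precisely your diagram chase through $\mathfrak V_d = T_{\maclA}\circ T_{\maclH}^{-1}$ together with the transported topologies of Definition \ref{Def:PilPowerSpaces}. Your closing consistency check against \eqref{Eq:BargmTransf} via \eqref{BargmannHermite} is also the same compatibility observation the paper records just before stating the proposition.
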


\par

It follows that if $f,g\in L^2(\rr d)$ and
$F,G\in A^2(\cc d)$, then
\begin{equation}\label{Scalarproducts}
\begin{aligned}
(f,g)_{L^2(\rr d)} &= \sum _{\alpha \in \nn d}c_h(f,\alpha ) \overline {c_h(g,\alpha )},
\\[1ex]
(F,G)_{A^2(\cc d)} &= \sum _{\alpha \in \nn d}c(F,\alpha ) \overline {c(G,\alpha )}.
\end{aligned}
\end{equation}

\par

By the definitions we get the following proposition
on duality for Pilipovi{\'c}
spaces and their Bargmann images. The details are left
for the reader.

\par

\begin{prop}
Let $s_1\in \mathbf R_\flat$ and $s_2\in \overline{\mathbf R}_\flat$. Then 
the form $(\cdo ,\cdo )_{L^2(\rr d)}$ on
$\maclH _0(\rr d)\times \maclH _0(\rr d)$ is uniquely extendable to
sesqui-linear forms on
\begin{alignat*}{2}
&\maclH _{s_2}'(\rr d)\times \maclH _{s_2}(\rr d),
&\quad
&\maclH _{s_2}(\rr d)\times \maclH _{s_2}'(\rr d),
\\[1ex]
&\maclH _{0,s_1}'(\rr d)\times \maclH _{0,s_1}(\rr d)
&\quad \text{and on}\quad
&\maclH _{0,s_1}(\rr d)\times \maclH _{0,s_1}'(\rr d).
\end{alignat*}
The (strong) duals of $\maclH _{s_2}(\rr d)$ and $\maclH _{0,s_1}(\rr d)$
are equal to $\maclH _{s_2}'(\rr d)$ and $\maclH _{0,s_1}'(\rr d)$,
respectively, through the form $(\cdo ,\cdo )_{L^2(\rr d)}$.

\par

The same holds true if the spaces in \eqref{Eq:Pilspaces}
and the form $(\cdo ,\cdo )_{L^2(\rr d)}$ are replaced by
corresponding spaces in \eqref{Eq:AnalPilspaces} and the form
$(\cdo ,\cdo )_{A^2(\cc d)}$, at each occurrence.
\end{prop}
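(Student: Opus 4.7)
The plan is to pull everything back through the topological isomorphisms $T_{\maclH}$ and $T_{\maclA}$ introduced in \eqref{Eq:THMap}--\eqref{Eq:TAMap}, and then invoke the sequence-space duality summarized in Remark \ref{Rem:SeqSpaces}. First I would verify that, for $f,g\in \maclH _0(\rr d)$, the Hermite expansion \eqref{Eq:HermiteExp} together with the orthonormality of $\{h_\alpha \}$ in $L^2(\rr d)$ yield
\begin{equation*}
(f,g)_{L^2(\rr d)}
=
\sum _{\alpha \in \nn d}c_h(f,\alpha )\overline{c_h(g,\alpha )}
=
(c_h(f,\cdo ),c_h(g,\cdo ))_{\ell ^2(\nn d)}.
\end{equation*}
Since the topologies on the four Pilipovi{\'c} spaces in \eqref{Eq:Pilspaces} are, by Definition \ref{Def:PilPowerSpaces}, inherited through $T_{\maclH}$ from the topologies on the sequence spaces in \eqref{Eq:DiscPilSpaces}, this identity shows that the $L^2$-form on $\maclH _0\times \maclH _0$ is exactly the pullback under $T_{\maclH}$ of the $\ell ^2$-form on $\ell _0(\nn d)\times \ell _0(\nn d)$. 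Consequently, the desired continuous sesqui-linear extensions of $(\cdo ,\cdo )_{L^2(\rr d)}$ to the four Pilipovi{\'c} product pairs are obtained simply by transporting via $T_{\maclH}$ the pairings whose existence and continuity are guaranteed by Remark \ref{Rem:SeqSpaces}.

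Next I would handle uniqueness and the strong-dual assertion. Remark \ref{Rem:SeqSpaces} records that $\ell _0(\nn d)$ is dense in each space in \eqref{Eq:DiscPilSpaces} as soon as the order is positive, so that $\maclH _0(\rr d)$ is dense in each of the four spaces in \eqref{Eq:Pilspaces}. Hence $\maclH _0\times \maclH _0$ is dense in each of the product spaces appearing in the statement, and any continuous sesqui-linear extension of $(\cdo ,\cdo )_{L^2(\rr d)}$ from $\maclH _0\times \maclH _0$ is unique. The claim that the strong dual of $\maclH _{s_2}(\rr d)$, respectively $\maclH _{0,s_1}(\rr d)$, is $\maclH _{s_2}'(\rr d)$, respectively $\maclH _{0,s_1}'(\rr d)$, realized through $(\cdo ,\cdo )_{L^2(\rr d)}$, then follows by transporting via $T_{\maclH}$ the corresponding duality statements for the sequence spaces in Remark \ref{Rem:SeqSpaces}, since $T_{\maclH}$ is a topological isomorphism.

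Finally, the Bargmann version proceeds identically with $T_{\maclA}$ in place of $T_{\maclH}$. The orthonormality of $\{e_\alpha \}$ in $A^2(\cc d)$, recorded in \eqref{BargmannHermite} and \eqref{A2scalar}, together with \eqref{Scalarproducts}, gives
\begin{equation*}
(F,G)_{A^2(\cc d)}=(c(F,\cdo ),c(G,\cdo ))_{\ell ^2(\nn d)},
\qquad F,G\in \maclA _0(\cc d),
\end{equation*}
so the previous arguments apply verbatim with part (2) of Definition \ref{Def:PilPowerSpaces} in place of part (1). The only step that requires any vigilance is the density of $\maclH _0$ and $\maclA _0$ in the respective distribution spaces, but this is precisely the density of $\ell _0$ in the $\ell _s$- and $\ell _{0,s}$-scales recalled in Remark \ref{Rem:SeqSpaces}, so no additional ingredient is needed.
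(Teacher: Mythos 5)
Your proposal is correct and follows exactly the route the paper intends: the paper offers no written proof beyond the remark that the result follows ``by the definitions,'' and your argument is precisely the fleshing-out of that, transporting the $\ell^2$-pairing and the sequence-space dualities of Remark \ref{Rem:SeqSpaces} through the topological isomorphisms $T_{\maclH}$ and $T_{\maclA}$, with density of $\ell_0$ giving uniqueness. No gaps worth noting.
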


\par

If $s\in \overline {\mathbf R_\flat}$, $f\in \maclH _s(\rr d)$, $g\in \maclH _s'(\rr d)$,
$F \in \maclA _s(\cc d)$ and $G \in \maclA _s'(\cc d)$, then $(f,g)_{L^2(\rr d)}$
and $(F,G)_{A^2(\cc d)}$ are defined by the formula \eqref{Scalarproducts}.
It follows that
\begin{equation}\label{ScalarproductsRel}
c_h(f,\alpha ) = c(F,\alpha )
\quad \text{when}\quad
F=\mathfrak V_df ,\ G=\mathfrak V_dg .
\end{equation}
holds for such choices of $f$ and $g$.

\par

\begin{rem}\label{Rem:AnalSpacesIdent}
In \cite{FeGaTo,Toft18}, the spaces in \eqref{Eq:AnalPilspaces}, contained in
$\maclA _{0,\flat _1}'(\cc d)=A(\cc d)$ are identified as canonical
spaces of analytic functions. For example it is here shown that
if $\sigma _1>0$ and $\sigma _2>1$, then
\begin{align*}
\maclA _{\flat _{\sigma _1}}(\cc d) &=
\sets {F\in A(\cc d)}{|F(z)|\lesssim e^{r|z|^{\frac {2\sigma _1}{\sigma _1 +1}} }
\ \text{for some $r>0$}}
\intertext{and}
\maclA _{\flat _{\sigma _2}}'(\cc d) &=
\sets {F\in A(\cc d)}{|F(z)|\lesssim e^{r|z|^{\frac {2\sigma_2}{\sigma _2 -1}} }
\ \text{for every $r>0$}}.
\end{align*}
\end{rem}

\par

\subsection{Fractional Fourier transforms}
\label{subsec1.7}

\par

We recall that 
%
(multiple ordered)
fractional Fourier transform $\mascF _{\! \varrho}$ with respect to
$\varrho =(\varrho _1,\dots ,\varrho _d)\in \rr d$ is the operator
with kernel given by
$K_{d,\varrho}$ in \eqref{Eq:FracFourKernel} and
\eqref{Eq:FracFourKernelDim1} (see e.{\,}g. \cite{dGoLue}).
Evidently,
\begin{equation}
\label{Eq:FractionalFourierMultipleOrder}
\mascF _{\! \varrho} = \mascF _{\! \varrho _1}
\otimes \cdots \otimes
\mascF _{\! \varrho _d},
\qquad
\varrho =(\varrho _1,\dots ,\varrho _d)\in \rr d,
\end{equation}
and it follows that $\mascF _{\! \varrho}$ makes sense
as homeomorphisms on
$$
\maclH _{0,s}(\rr d),\quad
\maclH _s(\rr d),\quad
\mascS (\rr d),\quad
\mascS '(\rr d),\quad
\maclH _s'(\rr d)
\quad \text{and}\quad
\maclH _{0,s}'(\rr d),
$$
and to a unitary operators on $L^2(\rr d)$
(see e.{\,}g. \cite{Ba1,Toft18}). For conveniency we
put $\mascF _{\! \varrho _0}=\mascF _{\! \varrho}$ when
$\varrho =(\varrho _0,\dots ,\varrho _0)\in \rr d$.

\par

\begin{rem}\label{Rem:FracFour}
Apart from the cases when $\varrho$ in  $\mascF _{\! \varrho}$
are (multiple) integers, the formula for the fractional Fourier transform
might not look like a visual eye candy, because the kernel
$K_\varrho (\xi ,x)$ is rather complex. However
the formula appear naturally by using suitable changes of
symplectic coordinates in quantum mechanics.
In fact, the symplectic map which rotates $(x,\xi )$ in
the phase space $\rr {2d}$ with angle $-\frac \pi 2$
into $(\xi ,-x)$, induces that the observables in quantum mechanics
(which are operators) should
be conjugated by the Fourier transform $\mascF =\mascF _1$ (see
e.{\,}g. \cite{CorGroNicRod,CorNic,dGo1,dGo2,dGoLue}).
It might then be natural to define the fractional Fourier transform
$\mascF _{\! \varrho}$ to be the operator which should conjugate the
quantum observables, when the phase space is rotated with the
angle $-\varrho \frac \pi 2$. That is,
\begin{center}
\begin{tabular}{cccc}
\emph{Symplectic map} & & & \emph{Conjugation of quantum observables}
\\[0.5ex]
\hline
&&&
\\[-1ex]
Rotation with angle \hspace{0.07cm} $-\frac \pi 2$
& & &
$\mascF _1$
\\[1ex]
Rotation with angle $-\varrho \frac \pi 2$
& & &
$\mascF _{\! \varrho}$
\end{tabular}
\end{center}

\par

This gives a unique definition of $\mascF _{\! \varrho}$, and after some
computations, it follows that the kernel of $\mascF _{\! \varrho}$ is given
by $K_{d,\varrho}$ in \eqref{Eq:FracFourKernel} and
\eqref{Eq:FracFourKernelDim1}.
An equivalent approach and which leads to the same
formulae, consist of using the metaplectic
representation of symplectic group and considering metaplectic
operators. (See e.{\,}g. \cite{CorGroNicRod,CorNic,dGo1,dGo2,dGoLue} for
more facts on metaplectic representations and corresponding operators.)

\par

A slightly equivalent way to reach the fractional Fourier transform
consists of investigating mapping properties of the Bargmann
transform, $\mathfrak V_d$. It is proved already in \cite{Ba1}
that the Bargmann image of $\widehat f$ is given by
$$
(\mathfrak V_d(\mascF f))(z) = (\mathfrak V_d\widehat f)(z)
=
(\mathfrak V_d\widehat f)(-iz)
=
(\mathfrak V_d\widehat f)(e^{-i\frac \pi 2}z).
$$
That is, the Bargmann image of $\widehat f$ is obtained by
retrieving corresponding image of $f$ and then (again) rotating
the argument with the angle $-\frac \pi 2$. In particular, the Fourier
transform can be evaluated as
$$
\mascF _1 = \mathfrak V_d^{-1}\circ U_1 \circ \mathfrak V_d,
\qquad
(U_1 F)(z)= F(e^{-i\frac \pi 2}z).
$$
It might then be natural to define the fractional Fourier transform as
$$
\mascF _{\! \varrho} = \mathfrak V_d^{-1}\circ U_\varrho \circ \mathfrak V_d,
\qquad
(U_\varrho F)(z)= F(e^{-i\varrho \frac \pi 2}z),
$$
and a straight-forward computations show that we attain
the same formula of the kernel $K_{d,\varrho}$ of
$\mascF _{\! \varrho}$ as before.
%
%
%
%
\end{rem}

\par

Due to the previous remark, the Bargmann
image of $\mascF _{\! \varrho}$ in \eqref{Eq:FracFourKernel},
\eqref{Eq:FracFourKernelDim1} and
\eqref{Eq:FractionalFourierMultipleOrder}
takes the form
\begin{equation}
\label{Eq:FracFourOnBargm}
(\mathfrak V_d(\mascF _{\! \varrho} f))(z)
=
(\mathfrak V_df)
(e^{-\frac i2{\pi }\varrho _1}z_1,\dots
,e^{-\frac i2{\pi }\varrho _d}z_d),\qquad
f \in \maclH _0'(\rr d).
\end{equation}

\par

Let $\phi (x)= \pi ^{-\frac d4}e^{-\frac 12 |x|^2}$.
Then we recall that the
Bargmann transform and the short-time Fourier transform
can be linked as
\begin{equation}\label{Eq:BargmannSTFT}
\begin{aligned}
V_\phi f(x,\xi ) &= (2\pi )^{-\frac d2}e^{-\frac 14|z|^2}
e^{-\frac i2\scal x\xi}
(\mathfrak V_df)(2^{-\frac 12}\overline z),
\\[1ex]
z &= x+i\xi ,\ x,\xi \in \rr d
\end{aligned}
\end{equation}
(see (1.28) in \cite{Toft10}). A combination
of \eqref{Eq:FracFourOnBargm} and
\eqref{Eq:BargmannSTFT} gives that if
$\varrho \in \rr d$,
\begin{equation}\label{Eq:CoordRotations}
\begin{alignedat}{2}
R_{1,\varrho _j}(x_j,\xi _j)
&=
(\cos \theta _j)x_j + (\sin \theta _j)\xi _j , &&
\\[1ex]
R_{2,\varrho _j}(x_j,\xi _j)
&=
-(\sin \theta _j)x_j +(\cos \theta _j)\xi _j, &
\qquad \theta _j
&=
\textstyle{\frac {\pi \varrho _j} 2}
\\[1ex]
R_{k,\varrho}(x,\xi )
&=
\big ( R_{k,\varrho _1}(x_1,\xi _1),\dots
,R_{k,\varrho_d}(x_d,\xi _d)
\big ), &
\qquad
k &= 1,2,
\\[1ex]
A_{d,\varrho }(x,\xi )
&=
(R_{1,\varrho }(x,\xi ),R_{2,\varrho }(x,\xi )), &&
\\[1ex]
U_{d,\varrho }(x,\xi )
&=
R_{1,\varrho }(x,\xi )+iR_{2,\varrho }(x,\xi ), &&
\end{alignedat}
\end{equation}
then
\begin{equation}\label{Eq:STFTFracFour}
\begin{aligned}
(V_\phi (\mascF _{\! \varrho} f))(x,\xi )
&= (2\pi )^{-\frac d2}e^{-\frac 14|z|^2}e^{-\frac i2\scal x\xi}
(\mathfrak V_df)(2^{-\frac 12}\overline {U_{d,\varrho} (z)})
\\[1ex]
&=
e^{i\frac 14\Phi _\varrho (x,\xi )}
V_\phi f(A_{d,\varrho} (x,\xi )),
\quad
z=x+i\xi \in \cc d.
\end{aligned}
\end{equation}

\par

For conveniency we set
$$
A_{d,\varrho _0}(x,\xi )=A_{d,\varrho}(x,\xi )
\quad \text{and}\quad
U_{d,\varrho _0}(x,\xi )=U_{d,\varrho}(x,\xi )
$$
when
$$
\varrho =(\varrho _0,\dots ,\varrho _0)\in \rr d
\quad \text{and}\quad
x,\xi \in \rr d.
$$
It it then clear that \eqref{Eq:STFTFracFour} still hold
true when $\mascF _{\! \varrho}$ is the fractional
Fourier transform on $\maclH _0'(\rr d)$ of order
$\varrho \in \mathbf R$.


\par

As in \cite{Toft18} we observe that if,
more generally, $\varrho \in \cc d$, then the map
$$
f\mapsto
\big (
z\mapsto (\mathfrak V_df
(e^{-\frac i2\pi \varrho _1}z_1,\dots ,e^{-\frac i2\pi \varrho _d}z_d)
\big )
$$
makes sense as a homeomorphism from $\maclH _0'(\rr d)$ into
$\maclA _0'(\cc d)$. In similar ways as in \cite{Toft18},
we define the fractional Fourier transform
$\mascF _{\! \varrho}$ by \eqref{Eq:FracFourOnBargm} when
$\varrho \in \cc d$.
Then it follows that still $\mascF _{\! \varrho}$ is continuous on
$\maclH _0'(\rr d)$.

\par

\subsection{Strichartz estimates}\label{subsec1.8}

\par

We recall that for a linear operator $R$ acting on
suitable functions
or (ultra-)distributions on $\rr d$, Strichartz estimates appears
when finding properties on solutions to Cauchy problems like
the generalized inhomogeneous Schr{\"o}dinger equation
\begin{equation}\label{Eq:GenSchrEqParPot}
\begin{cases}
i\partial _tu-Ru=F, 
\\[1ex]
u(0,x)=u_0(x),\qquad
(t,x)\in I\times \rr d.
\end{cases}
\end{equation}
(See e.{\,}g. \cite{Tag} and the references therein.)
Here $I=[0,\infty )$ or $I=[0,T]$ for some $T>0$,
$F$ is a suitable function or (ultra-)distribution on $I\times \rr d$,
$R$ is a linear operator acting on functions or distributions on $\rr d$,
and $u_0$ is a suitable function or (ultra-)distribution on $\rr d$.
The solution of \eqref{Eq:GenSchrEqParPot}
is formally given by
\begin{equation}\label{Eq:FormalSol}
u(t,x) = (e^{-itR}u_0)(x)-i\int _0^t (e^{i(t-s)R}F(t,\cdo ))(x)\, ds.
\end{equation}
In particular it follows that continuity properties of the
propagator
\begin{alignat}{2}
(E_Rf)(t,x) &\equiv (e^{-itR}u_0)(x), &
\quad
(t,x) &\in I\times \rr d.
\label{Eq:DefEROp}
\intertext{as well as for the operator}
(S_{1,R}F)(t,x)
&= 
\int _0^t (e^{-i(t-s)R}F(s,\cdo ))(x)
\, ds, &
\qquad
(t,x) &\in I\times \rr d,
\label{Eq:DefS1ROp}
\intertext{are essential for finding estimates for solutions to
\eqref{Eq:GenSchrEqParPot}. We observe that
the $L^2(I\times \rr d)$ adjoint $E_R^*$ of $E_R$
is given by}
(E_R^*F)(x) &= \int _I (e^{isR}F(s,\cdo ))(x)\, ds, &
\quad
x &\in \rr d,
\label{Eq:DefERAdjOp}
\intertext{and that the composition $E_R\circ E_R^*$
of $E_R$ and $E_R^*$ is the operator $S_{2,R}$,
similar to $S_{1,R}$, and given by}
(S_{2,R}F)(t,x)
&= 
\int _I (e^{-i(t-s)R}F(s,\cdo ))(x)
\, ds, &
\quad
(t,x)&\in I\times \rr d.
\label{Eq:DefS2ROp}
\end{alignat}
We recall that continuity properties of $E_R$ (or $E_R^*$)
are strongly linked to continuity properties for $S_{2,R}$
(see \cite{GinVel}). Estimates on the operator $E_R$ in
\eqref{Eq:DefEROp} is called \emph{homogeneous Strichartz
estimates}, while estimates for $S_{1,R}$ in \eqref{Eq:DefS1ROp},
or even for $S_{2,R}$ in \eqref{Eq:DefS2ROp}, are called
\emph{inhomogeneous Strichartz estimates}.

\par

In our situation, the operator $R$ is given by the
operator
$H_{x,\varrho ,c}$ for some $c\in \mathbf C$
and $\varrho \in \rr d$ or $\varrho \in \cc d$,
and for such choice of $R$, we put
$E=E_R$ and $S_j=S_{j,R}$, $j=1,2$. Hence
\begin{alignat}{2}
(Ef)(t,x) &\equiv (e^{-itH_{x,\varrho ,c}}u_0)(x), &
\quad
(t,x) &\in I\times \rr d,
\tag*{(\ref{Eq:DefEROp})$'$}
\\[1ex]
(E^*F)(x) &= \int _I (e^{isH_{x,\varrho ,c}}F(s,\cdo ))(x)\, ds, &
\quad
x &\in \rr d.
\tag*{(\ref{Eq:DefERAdjOp})$'$}
\\[1ex]
(S_{1}F)(t,x)
&= 
\int _0^t (e^{-i(t-s)H_{x,\varrho ,c}}F(s,\cdo ))(x)
\, ds, &
\quad
(t,x) &\in I\times \rr d,
\tag*{(\ref{Eq:DefS1ROp})$'$}
\intertext{and}
(S_{2}F)(t,x)
&= 
\int _I (e^{-i(t-s)H_{x,\varrho ,c}}F(s,\cdo ))(x)
\, ds, &
\quad
(t,x)&\in I\times \rr d.
\tag*{(\ref{Eq:DefS2ROp})$'$}
\end{alignat}

\par

In Section \ref{sec4} we deduce continuity properties for the
operators $E$ when acting on modulation spaces, and for
$S_1$ and $S_2$ when acting on Lebesgue spaces with
values in modulation spaces.

%
%

\par

\section{Powers of generalized
harmonic oscillator propagators
on Pilipovi{\'c} spaces}\label{sec2}

\par

In this section we show that powers of harmonic
oscillators, $H_{x,c}$, or more generally
$H_{x,\varrho ,c}$, are continuous on Pilipovi{\'c}
spaces. If in addition $H_{x,\varrho ,c}$ is
injective, then we show that powers of
$H_{x,\varrho ,c}$ are in fact homeomorphisms
on Pilipovi{\'c} spaces and their distribution
spaces. We also consider harmonic oscillator propagators and
deduce homeomorphism properties of such operators on
Pilipovi{\'c} spaces.

\par

In the last part we show that
powers of harmonic oscillators
are continuous on Hilbert modulation spaces of the form
$M^{2,2}_{(\vartheta _r)}(\rr d)$, where $\vartheta _r(x,\xi )=\eabs {(x,\xi )}^r$.

\par

\subsection{Continuity of powers of $H_{x,c}$ and
their propagators}

\par

For any
$c\in \mathbf C$, it follows that
$H_{x,c}$ is continuous on
$\maclH _0(\rr d)$, and that
\begin{equation}\label{Eq:HermExpPowHarmOsc}
H_{x,c}f(x)
=
\sum _{\alpha \in \nn d}
\left (
2|\alpha |+d+c
\right )
c_h(f,\alpha )h_\alpha (x),
\end{equation}
when $f\in \maclH _0(\rr d)$
is given by \eqref{Eq:HermiteExp}.
By duality it follows that $H_{x,c}$
on $\maclH _0(\rr d)$ is uniquely
extendable to a continuous map on
$\maclH _0'(\rr d)$, and that
\eqref{Eq:HermExpPowHarmOsc} still holds true
when $f\in \maclH _0'(\rr d)$ is given by 
\eqref{Eq:HermiteExp}.

\par

In the same way it follows that if $r\ge 0$
is real, then
\begin{alignat}{4}
H_{x,c} ^r \, &: & \ & \sum _{\alpha \in \nn 
d}c_h(f,\alpha )h_\alpha &
&\mapsto & &
\sum _{\alpha \in \nn d}(2|\alpha |+d+c)^r
c_h(f,\alpha )h_\alpha
\label{Eq:HarmonicPowers}
\intertext{and}
e^{\zeta H_{x,c} ^r} \, &: & \ &
\sum _{\alpha \in \nn 
d}c_h(f,\alpha )h_\alpha &
&\mapsto & & \sum _{\alpha \in \nn d}e^{
(2|\alpha |+d+c)^r}
c_h(f,\alpha )h_\alpha
\label{Eq:HarmonicPowersPropagator}
\end{alignat}
are continuous on $\maclH _0(\rr d)$, and uniquely 
extendable to continuous mappings on
$\maclH _0'(\rr d)$. More generally, if
$r\in \mathbf R$, and
\begin{alignat}{3}
c&\in \mathbf C\setminus
\sets{-2n-d}
{n\in \mathbf N} &
\quad &\text{and} &\quad r&\in \mathbf R
\label{Eq:Condcr1}
\intertext{or}
c&\in \mathbf C &
\quad &\text{and} &\quad r&\in
\overline {\mathbf R}_+ ,
\label{Eq:Condcr2}
\end{alignat}
then the operators \eqref{Eq:HarmonicPowers}
and \eqref{Eq:HarmonicPowersPropagator}
are continuous on $\maclH _0(\rr d)$ and on
$\maclH _0'(\rr d)$. The following result extends
these continuity properties to other
Pilipovi{\'c} spaces.



\par

\begin{prop}\label{Prop:BargPilSpaces}
Let $\zeta \in \mathbf C$, $r\in \mathbf R$ and
$c\in \mathbf C$ be as in
\eqref{Eq:Condcr1} or as in \eqref{Eq:Condcr2},
and let $s,s_1,s_2\in \overline {\mathbf R_{\flat}}$
be such that $0<s_1\le \frac 1{2r}$ and
$s_2< \frac 1{2r}$. Then the following is true:
\begin{enumerate}
\item the map \eqref{Eq:HarmonicPowers}
on $\maclH _0'(\rr d)$ restricts
to continuous mappings on $\mascS (\rr d)$,
$\mascS '(\rr d)$
and on the spaces in \eqref{Eq:Pilspaces}.
If \eqref{Eq:Condcr1} holds true,
then these mappings are homeomorphisms;

\vrum

\item the map \eqref{Eq:HarmonicPowersPropagator} on
$\maclH _0'(\rr d)$ restricts
to homeomorphisms on
\begin{equation}\label{Eq:Pilspaces2}
\maclH _{0,s_1}(\rr d),\quad \maclH _{s_2}(\rr d),
\quad \maclH _{s_2}'(\rr d)
\quad \text{and on}\quad
\maclH _{0,s_1}'(\rr d).
\end{equation}
If in addition $\zeta \in i\mathbf R$, then the map
\eqref{Eq:HarmonicPowersPropagator} is homeomorphic
on $\mascS (\rr d)$, $\mascS '(\rr d)$ and the spaces
in \eqref{Eq:Pilspaces}
\end{enumerate}
\end{prop}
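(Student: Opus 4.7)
The plan is to reduce both assertions to questions about diagonal multipliers acting on the sequence space $\ell_0'(\nn d)$. Under the Hermite coefficient isomorphism $T_{\maclH} : \ell_0'(\nn d)\to \maclH_0'(\rr d)$ of \eqref{Eq:THMap}, the operators in \eqref{Eq:HarmonicPowers} and \eqref{Eq:HarmonicPowersPropagator} become multiplications by
$$
m_\alpha^{(1)}=(2|\alpha|+d+c)^r
\quad\text{and}\quad
m_\alpha^{(2)}=e^{\zeta(2|\alpha|+d+c)^r},
$$
respectively. By Definition \ref{Def:PilPowerSpaces}, the spaces in \eqref{Eq:Pilspaces} and \eqref{Eq:Pilspaces2} are $T_{\maclH}$-images of the sequence spaces in \eqref{Eq:DiscPilSpaces}, and $\mascS(\rr d)$, $\mascS'(\rr d)$ admit the Hermite characterizations \eqref{Eq:SchwSpHermChar}, \eqref{Eq:TempDistHermChar}. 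I would therefore reduce the proof to verifying that $m_\alpha^{(j)}$ and its inverse preserve each of the weighted $\ell^\infty$ conditions appearing in Definition \ref{Def:SeqSpaces}.

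For assertion (1), I would first note that \eqref{Eq:Condcr1} and \eqref{Eq:Condcr2} both ensure $m_\alpha^{(1)}\ne 0$ for every $\alpha$, and a direct computation yields $|m_\alpha^{(1)}|^{\pm 1}\lesssim \eabs{\alpha}^{|r|}$ uniformly in $\alpha$. Such polynomial factors are absorbed by the exponential weight $e^{r'|\alpha|^{1/(2s)}}$ (for $s\in \mathbf R_+$) and by the factorial weight $r'^{-|\alpha|}\alpha!^{1/(2\sigma)}$ (for $s=\flat_\sigma$) after a harmless shift of $r'$, as well as by the polynomial weights $\eabs{\alpha}^N$ entering \eqref{Eq:SchwSpHermChar}--\eqref{Eq:TempDistHermChar}. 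This gives continuity on each of the listed spaces; under \eqref{Eq:Condcr1} the inverse multiplier is likewise polynomially bounded and everywhere defined, which upgrades continuity to a homeomorphism.

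For assertion (2), the key estimate is
$$
|m_\alpha^{(2)}|^{\pm 1}\le \exp\bigl(|\zeta|\cdot |2|\alpha|+d+c|^r\bigr)\le C\, e^{\kappa |\alpha|^r}
$$
for constants $C,\kappa>0$ depending only on $\zeta,r,c$. In the Beurling case $s_1\le \frac{1}{2r}$, the inequality $r\le \frac{1}{2s_1}$ together with the defining requirement that $\sup_\alpha |a(\alpha)|e^{r'|\alpha|^{1/(2s_1)}}<\infty$ hold for \emph{every} $r'>0$ allows one to replace $r'$ by $r'+\kappa$, absorbing the factor $e^{\kappa|\alpha|^r}$. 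In the Roumieu case $s_2<\frac{1}{2r}$, the \emph{strict} inequality upgrades the bound to $e^{\kappa|\alpha|^r}\le C_\epsilon e^{\epsilon|\alpha|^{1/(2s_2)}}$ for every $\epsilon>0$, so that an input in $\ell_{s_2;r_0}^\infty$ gets mapped into $\ell_{s_2;r_0/2}^\infty\subseteq \ell_{s_2}(\nn d)$ upon choosing $\epsilon=r_0/2$. Analogous estimates on the starred weighted spaces $\ell_{s;r'}^{\infty,*}$, combined with the duality identifications recalled in Remark \ref{Rem:SeqSpaces}, handle the distribution spaces $\maclH_{s_2}'(\rr d)$ and $\maclH_{0,s_1}'(\rr d)$. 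Finally, when $\zeta\in i\mathbf R$ and $c$ is real, $\lambda_\alpha=2|\alpha|+d+c$ is positive for large $|\alpha|$, so $\lambda_\alpha^r\in \mathbf R$ and $|m_\alpha^{(2)}|=1$; the multiplier and its inverse then preserve also the polynomial weights of $\mascS$ and $\mascS'$, yielding the additional homeomorphism assertion. The principal obstacle will be the behaviour at the critical exponent $s=\frac{1}{2r}$: the Beurling side tolerates equality because its defining intersection over all $r'>0$ absorbs the fixed shift $\kappa$, whereas the Roumieu side must stay strictly below because its defining union admits no such shift once the input has arbitrarily small parameter.
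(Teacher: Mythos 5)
Your proposal follows exactly the same route as the paper: conjugate by the Hermite--coefficient isomorphism $T_{\maclH}$ and verify that the resulting diagonal multipliers $(2|\alpha|+d+c)^r$ and $e^{\zeta(2|\alpha|+d+c)^r}$ preserve the weighted $\ell^\infty$ conditions defining the sequence spaces in Definition \ref{Def:SeqSpaces}. The paper's own proof merely asserts the continuity and bijectivity of these multiplier maps as a fact, whereas you supply the explicit polynomial and exponential bounds together with the absorption mechanism that distinguishes the Beurling case $s_1\le \frac{1}{2r}$ from the Roumieu case $s_2<\frac{1}{2r}$, so your argument is, if anything, more detailed than the original.
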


\par

\begin{proof}
The assertion (1) follows from \eqref{Eq:HarmonicPowers},
the definitions of the spaces in
\eqref{Eq:Pilspaces} and the fact that
\begin{equation}\label{Eq:HarmonicPowersSeq}
\{ c(\alpha ) \}_{\nn d}
\mapsto
\{ (2|\alpha |+d+c)^rc(\alpha ) \}_{\nn d}
\end{equation}
is continuous on the sequence spaces in Definition \ref{Def:SeqSpaces} (3).
The homeomorphism property in the case $c\neq -d$ follows from the fact that
the map \eqref{Eq:HarmonicPowersSeq} is then a continuous bijection.

\par

In the same way, (2) follows from \eqref{Eq:HarmonicPowersPropagator}
and the fact that
\begin{equation}\label{Eq:HarmonicPowersPropagatorSeq}
\{ c(\alpha ) \}_{\nn d}
\mapsto
\{ e^{\zeta (2|\alpha |+d+c)^r}c(\alpha ) \}_{\nn d}
\end{equation}
is a homeomorphism on the sequence spaces which correspond
to the spaces in \eqref{Eq:Pilspaces2}.
\end{proof}

\par

We also have the following negative result concerning
continuity for the operator in
\eqref{Eq:HarmonicPowersPropagator}.

\par

\begin{prop}\label{Prop:BargPilSpacesNonCont}
Let $\zeta \in \mathbf C$ be such that
$\operatorname{Re}(\zeta )>0$, $r>0$,
$c\in \mathbf C$,
and let $s,s_1,s_2\in \overline {\mathbf R_{\flat}}$
be such that $s_1> \frac 1{2r}$ and
$s_2\ge \frac 1{2r}$. Then the following is true:
\begin{enumerate}
\item the map \eqref{Eq:HarmonicPowersPropagator}
is discontinuous from $\mascS (\rr d)$ to $\mascS '(\rr d)$;

\vrum

\item the map \eqref{Eq:HarmonicPowersPropagator}
is discontinuous from $\maclH _{0,s_1}(\rr d)$
to $\maclH _{0,s_1}'(\rr d)$;

\vrum

\item the map \eqref{Eq:HarmonicPowersPropagator}
is discontinuous from $\maclH _{s_2}(\rr d)$
to $\maclH _{s_2}'(\rr d)$.
\end{enumerate}
\end{prop}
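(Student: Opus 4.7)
The plan is to exhibit, in each of the three parts, a single element $f$ in the source space whose image $g = e^{\zeta H_{x,c}^r}f$, well-defined in $\maclH_0'(\rr d)$ by \eqref{Eq:HarmonicPowersPropagator}, fails to belong to the corresponding target distribution space; this rules out continuity a fortiori. The verification identifies each space with a sequence space through the Hermite coefficient map, using the characterizations in Definitions \ref{Def:SeqSpaces} and \ref{Def:PilPowerSpaces} together with \eqref{Eq:SchwSpHermChar}--\eqref{Eq:TempDistHermChar}.

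Put $\alpha_n = n e_1 \in \nn d$, fix $\epsilon \in (0, 2^r \operatorname{Re} \zeta)$ and set
\begin{equation*}
f = \sum_{n=0}^{\infty} c_n\, h_{\alpha_n}, \qquad c_n = e^{-\epsilon n^r}.
\end{equation*}
Since $c_n$ decays faster than every negative power of $n \asymp |\alpha_n|$, one has $f \in \mascS(\rr d)$, covering (1). In (2), $s_1 > 1/(2r)$ gives $1/(2s_1) < r$, hence $c_n \lesssim e^{-r_0 n^{1/(2s_1)}}$ for \emph{every} $r_0 > 0$, so $f \in \maclH_{0,s_1}(\rr d)$. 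In (3), $s_2 \geq 1/(2r)$ gives $1/(2s_2) \leq r$, so the same estimate holds at least for $r_0 = \epsilon$, whence $f \in \maclH_{s_2}(\rr d)$. The $\flat_\sigma$ subcase is analogous after replacing $c_n$ by $n!^{-1/(2\sigma)}$; the hypothesis $\flat_\sigma \geq 1/(2r)$ forces $r > 1$, so the propagator growth estimate below still dominates the factorial.

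By \eqref{Eq:HarmonicPowersPropagator}, $c_h(g,\alpha_n) = e^{\zeta(2n+d+c)^r - \epsilon n^r}$. For $n$ large, the principal branch satisfies $(2n+d+c)^r = (2n)^r(1+O(1/n))$, so
\begin{equation*}
\operatorname{Re}\bigl(\zeta(2n+d+c)^r\bigr) = 2^r \operatorname{Re}(\zeta)\, n^r + O(n^{r-1}),
\end{equation*}
and therefore $|c_h(g,\alpha_n)| \geq e^{\kappa n^r}$ for all large $n$, with $\kappa := 2^r \operatorname{Re}(\zeta) - \epsilon > 0$ after absorbing the lower-order term. In (1), super-polynomial growth of $|c_h(g,\alpha_n)|$ contradicts \eqref{Eq:TempDistHermChar}, so $g \notin \mascS'(\rr d)$. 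In (2), membership in $\maclH_{0,s_1}'(\rr d)$ would require $|c_h(g,\alpha)| \lesssim e^{r_0|\alpha|^{1/(2s_1)}}$ for \emph{some} $r_0 > 0$; but $1/(2s_1) < r$ makes $|c_h(g,\alpha_n)|/e^{r_0 n^{1/(2s_1)}} \geq e^{\kappa n^r - r_0 n^{1/(2s_1)}} \to \infty$ for \emph{every} $r_0$, so $g \notin \maclH_{0,s_1}'(\rr d)$. In (3), fixing $r_0 = \kappa/2$ yields $|c_h(g,\alpha_n)|/e^{r_0 n^{1/(2s_2)}} \to \infty$ since $n^r \geq n^{1/(2s_2)}$ for large $n$, so the defining estimate of $\maclH_{s_2}'(\rr d)$ fails for this $r_0$.

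The main obstacle is the borderline case $1/(2s_2) = r$ in part (3), where the propagator growth exponent and the dual-space test exponent coincide; this forces one to quantify the strict inequality $\epsilon < 2^r \operatorname{Re}(\zeta)$ and to control the $O(n^{r-1})$ correction from the principal-branch expansion of $(2n+d+c)^r$. The latter is routine for $r > 0$, but needs the minor observation that $2n+d+c$ eventually lies in a fixed sector around the positive real axis, so that the principal branch is applicable and asymptotic.
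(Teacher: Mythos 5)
Your proof is correct and follows essentially the same strategy as the paper's: exhibit an explicit Hermite series in the source space whose image under \eqref{Eq:HarmonicPowersPropagator} has coefficients growing too fast for the target distribution space (the paper reduces to real $\zeta>0$ and $c=0$ and uses the coefficients $e^{-(\log (1+|\alpha |))^2}$ for (1) and $e^{-(1+|\alpha |)^{1/(2s)}}$ with an intermediate $\frac 1{2r}<s<s_1$ for (2), while you use a single family $e^{-\ep n^r}$ supported on the ray $ne_1$, tuned by $\ep <2^r\RE (\zeta )$, which handles (1)--(3) and complex $\zeta ,c$ at once). One small slip in a case the paper also leaves to the reader: for the Beurling case (2) with $s_1=\flat _\sigma$, the replacement $c_n=n!^{-1/(2\sigma )}$ does \emph{not} place $f$ in $\maclH _{0,\flat _\sigma}(\rr d)$ (the required bound $|c_n|\lesssim r_0^{\, n}\, n!^{-1/(2\sigma )}$ for \emph{every} $r_0>0$ fails for $r_0<1$); you need $c_n=n!^{-1/(2\sigma ')}$ for some $\sigma '<\sigma$, i.e.\ the same intermediate-index device the paper uses in its proof of (2).
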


\par

\begin{proof}
We only prove (1) and (2), and then
in the case when $s_1\in \mathbf R$ and $c=0$.
The other cases follow by similar arguments and are left
for the reader.

\par

Since $e^{itH_{x}^r}$ is a homeomorphism on
all involved spaces when $t$ is real, we may assume that
$\zeta >0$ is real.
Let
$$
f=\sum _{\alpha \in \nn d} e^{-(\log (1+|\alpha |))^2}
h_\alpha ,
$$
i.{\,}e. the Hermite coefficients for $f$ are given by
$c_h(f,\alpha )= e^{-(\log (1+|\alpha |))^2}$.
Since
$$
e^{-(\log (1+|\alpha |))^2} \lesssim \eabs \alpha ^{-N}
$$
for every $N\ge 1$, it follows that $f\in \mascS (\rr d)$
in view of \eqref{Eq:SchwSpHermChar}.

\par

On the other hand, by
\eqref{Eq:HarmonicPowersPropagator} we get
$$
c_h(e^{\zeta H_{x}^r}f,\alpha )
=
e^{\zeta (2|\alpha |+d)^r}e^{-(\log (1+|\alpha |))^2}
\gtrsim
e^{\zeta (|\alpha |+d)^r}\gtrsim \eabs \alpha ^N,
$$
for every $N\ge 1$. Hence
\eqref{Eq:TempDistHermChar} shows that
$e^{\zeta H_{x}^r}f\notin \mascS '(\rr d)$,
while $f\in \mascS (\rr d)$. This gives (1).

\par

In order to prove (2), let $s\in \mathbf R$ be
such that $\frac 1{2r}<s<s_1$, and let
$$
f=\sum _{\alpha \in \nn d} e^{-(1+|\alpha |)^{\frac 1{2s}}}
h_\alpha ,
$$
i.{\,}e. the Hermite coefficients for $f$ are given by
$c_h(f,\alpha )= e^{-(1+|\alpha |)^{\frac 1{2s}}}$.
Since
$$
e^{-(1+|\alpha |)^{\frac 1{2s}}}
\lesssim e^{-r(1+|\alpha |)^{\frac 1{2s_1}}}
$$
for every $r>0$, it follows that $f\in \maclH_{0,s_1} (\rr d)$
in view of the definitions of $\maclH_{0,s_1} (\rr d)$.

\par

On the other hand, by
\eqref{Eq:HarmonicPowersPropagator} we get
\begin{align*}
c_h(e^{\zeta H_{x}^r}f,\alpha )
&=
e^{\zeta (2|\alpha |+d)^r}e^{-(1+|\alpha |)^{\frac 1{2s}}}
\\[1ex]
&\gtrsim
e^{\zeta (|\alpha |+d)^r}
\gtrsim
e^{r(1+|\alpha |)^{\frac 1{2s}}}
\gtrsim
e^{r(1+|\alpha |)^{\frac 1{2s_1}}},
\end{align*}
for every $r>0$. Hence
$e^{\zeta H_{x}^r}f\notin \maclH _{0,s_1}'(\rr d)$,
due to the definitions,
while $f\in \maclH _{0,s_1}(\rr d)$. This gives (2),
and the result follows.
\end{proof}

\par

\subsection{Extensions to powers of $H_{x,\varrho ,c}$
and their propagators}

\par

The previous results can be extended to allow
$H_{x,\varrho ,c}$ in place of $H_{x,c}$,
for more general choices of $\varrho \in \cc d$.
We notice that 
\begin{alignat}{4}
H_{x,\varrho ,c} ^r \, &: & \ & \sum _{\alpha \in \nn 
d}c_h(f,\alpha )h_\alpha &
&\mapsto & &
\sum _{\alpha \in \nn d}(2\scal \alpha \varrho
+ \sumvec (\varrho)+c)^r
c_h(f,\alpha )h_\alpha
\tag*{(\ref{Eq:HarmonicPowers})$'$}
\intertext{and}
e^{\zeta H_{x,\varrho ,c} ^r} \, &: & \ &
\sum _{\alpha \in \nn 
d}c_h(f,\alpha )h_\alpha &
&\mapsto & & \sum _{\alpha \in \nn d}e^{
\zeta (2\scal \alpha \varrho
+ \sumvec (\varrho)+c)^r}
c_h(f,\alpha )h_\alpha
\tag*{(\ref{Eq:HarmonicPowersPropagator})$'$}
\end{alignat}
when $f\in \maclH _0'(\rr d)$, and
\begin{alignat}{3}
c&\in \mathbf C\setminus
\sets{-2\scal \alpha \varrho -\sumvec (\varrho)}
{\alpha \in \nn d} &
\quad &\text{and} &\quad r&\in \mathbf R
\tag*{(\ref{Eq:Condcr1})$'$}
\intertext{or}
c&\in \mathbf C &
\quad &\text{and} &\quad r&\in
\overline {\mathbf R}_+ ,
\tag*{(\ref{Eq:Condcr2})$'$}
\end{alignat}
Here and in what follows we let
$$
\sumvec (\varrho ) = \sum _{j=1}^d\varrho _j,
\quad \text{when}\quad
\varrho =(\varrho _1,\dots ,\varrho _d)\in \cc d.
$$

\par

By similar arguments as in the proof of
Proposition \ref{Prop:BargPilSpaces} we
get the following extension. The details are
left for the reader.

\par

\renewcommand{\rubrik}{Proposition
\ref{Prop:BargPilSpaces}$'\!\!$}

\par

\begin{tom}
Let $\zeta \in \mathbf C$,$r\in \mathbf R$
$\varrho \in \cc d$ and $c\in \mathbf C$ be as in
\eqref{Eq:Condcr1}$'$ or as in \eqref{Eq:Condcr2}$'$,
and let $s,s_1,s_2\in \overline {\mathbf R_{\flat}}$
be such that $0<s_1\le \frac 1{2r}$ and
$s_2< \frac 1{2r}$. Then the following is true:
\begin{enumerate}
\item the map \eqref{Eq:HarmonicPowers}$'$
on $\maclH _0'(\rr d)$ restricts
to continuous mappings on $\mascS (\rr d)$,
$\mascS '(\rr d)$
and on the spaces in \eqref{Eq:Pilspaces}.
If in addition \eqref{Eq:Condcr1}$'$ holds true,
then these mappings are homeomorphisms;

\vrum

\item the map \eqref{Eq:HarmonicPowersPropagator}$'$ on
$\maclH _0'(\rr d)$ restricts
to homeomorphisms on
\begin{equation}\tag*{(\ref{Eq:Pilspaces2})$'$}
\maclH _{0,s_1}(\rr d),\quad \maclH _{s_2}(\rr d),
\quad \maclH _{s_2}'(\rr d)
\quad \text{and on}\quad
\maclH _{0,s_1}'(\rr d).
\end{equation}
If in addition $\zeta \varrho _j^r \in i\mathbf R$
for every $j$, then the map
\eqref{Eq:HarmonicPowersPropagator}$'$ is homeomorphic
on $\mascS (\rr d)$, $\mascS '(\rr d)$ and the spaces
in \eqref{Eq:Pilspaces}
\end{enumerate}
\end{tom}

\par




\par

In the same way, similar arguments as in
the proof of Proposition
\ref{Prop:BargPilSpacesNonCont} gives
the following extension. The details are left
for the reader.

\par

\renewcommand{\rubrik}{Proposition 
\ref{Prop:BargPilSpacesNonCont}$'$}

\par

\begin{tom}
Let $r>0$, $\varrho \in \cc d$ be such that
$\operatorname{Re}(\zeta \varrho _j^r )>0$ for
every $j=1,\dots ,d$,
$c\in \mathbf C$,
and let $s,s_1,s_2\in \overline {\mathbf R_{\flat}}$
be such that $s_1> \frac 1{2r}$ and
$s_2\ge \frac 1{2r}$. Then the following is true:
\begin{enumerate}
\item the map \eqref{Eq:HarmonicPowersPropagator}
is discontinuous from $\mascS (\rr d)$ to $\mascS '(\rr d)$;

\vrum

\item the map \eqref{Eq:HarmonicPowersPropagator}
is discontinuous from $\maclH _{0,s_1}(\rr d)$
to $\maclH _{0,s_1}'(\rr d)$;

\vrum

\item the map \eqref{Eq:HarmonicPowersPropagator}
is discontinuous from $\maclH _{s_2}(\rr d)$
to $\maclH _{s_2}'(\rr d)$.
\end{enumerate}
\end{tom}

\par

\subsection{Powers of harmonic oscillator
on Hilbert modulations spaces}\label{subsec2.3}

\par

Let $r\in \mathbf R$, $p,q\in (0,\infty ]$, $s>1$,
$\omega \in \mascP _{E,s}(\rr {2d})$ and
$\vartheta _r(x,\xi )=\eabs {(x,\xi )}^r$. Then
it follows from \cite[Proposition 1.34$'$]{AbCoTo}
that $H_x^N$ is homeomorphic from
$M^{p,q}_{(\omega \vartheta _N)}(\rr d)$ to
$M^{p,q}_{(\omega )}(\rr d)$ when $N$ is an integer. So far,
we are not able to prove any
such result when the integer $N$ is replaced by
a general real number for such general modulation spaces.
On the other hand we have the following
for certain Hilbert modulation spaces. See also
\cite{BogCorGro,Dau} for similar results. Here we
restrict ourself to weights which are rotational
invariant with respect to each phase space, or complex
variable. That is, we assume that
\begin{equation}\label{Eq:RotInvWeights}
\omega (x,\xi ) = \omega _0(\rho ),
\qquad
\rho _j = x_j^2+\xi _j^2,
\end{equation}
for some positive function $\omega _0$
on ${\overline {\mathbf R}}_+^d$.

\par

\begin{thm}\label{Thm:FracHarmOscModSp}
Let $r,t\in \mathbf R$, $\vartheta _r(x,\xi )=\eabs {(x,\xi )}^r$
and suppose that $\omega \in \mascP _E(\rr {2d})$ satisfies
\eqref{Eq:RotInvWeights} for some positive function $\omega _0$
on ${\overline {\mathbf R}}_+^d$.
Then the following is true:
\begin{enumerate}
\item $H_x^{r}$ on $\maclH _{0}(\rr d)$ is uniquely extendable to
a homeomorphism from $M^{2}_{(\omega )}(\rr d)$ to
$M^{2}_{(\omega /\vartheta _{r})}(\rr d)$, with bound of $H_x^{r}$
which is independent of $r$;

\vrum

\item $e^{itH_x^{r}}$ on $\maclH _{0}(\rr d)$ is uniquely extendable to
a homeomorphism on $M^{2}_{(\omega )}(\rr d)$, with bound
of $e^{itH_x^{r}}$ which is independent of $t$ and $r$.
\end{enumerate}
\end{thm}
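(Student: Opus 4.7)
The plan is to reduce both parts to diagonal mapping properties on Hermite coefficients, using the rotational invariance of $\omega$ to keep the Hermite basis orthogonal with respect to the modulation-space inner product.

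First I would pass from the short-time Fourier picture to the Bargmann picture via \eqref{Eq:BargmannSTFT}. Writing $z=x+i\xi$ and substituting $w=2^{-1/2}\bar z$ in the integral defining the $M^2_{(\omega)}$-norm, the rotational invariance \eqref{Eq:RotInvWeights} together with $\rho_j=|z_j|^2=2|w_j|^2$ yields
\begin{equation*}
\|f\|_{M^2_{(\omega)}}^2 \asymp \int_{\mathbf C^d}|\mathfrak V_d f(w)|^2\, \omega_0\bigl(2|w_1|^2,\dots,2|w_d|^2\bigr)^2\, d\mu(w).
\end{equation*}
Because the weight depends only on the moduli $|w_j|^2$, the monomials $e_\alpha(w)=w^\alpha/\sqrt{\alpha!}$ remain pairwise orthogonal in this weighted $A^2$-inner product. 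Expanding $\mathfrak V_d f=\sum_\alpha c_h(f,\alpha)e_\alpha$ and integrating out the angular variables $w_j=r_je^{i\theta_j}$, one obtains the diagonal formula
\begin{equation*}
\|f\|_{M^2_{(\omega)}}^2 \asymp \sum_{\alpha\in \mathbf N^d}|c_h(f,\alpha)|^2 N_\alpha(\omega),\qquad N_\alpha(\omega):=\frac{1}{\alpha!}\int_{\mathbf R_+^d} s^\alpha\, \omega_0(2s)^2\, e^{-|s|_1}\, ds.
\end{equation*}
Thus the Hermite-coefficient map realizes an isomorphism $M^2_{(\omega)}(\mathbf R^d)\simeq \ell^2_{\sqrt{N(\omega)}}(\mathbf N^d)$; this identification is the structural backbone of what follows.

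Under this isomorphism both operators act diagonally in the Hermite basis: $H_x^r$ becomes multiplication of $c_h(f,\alpha)$ by $(2|\alpha|+d)^r$, while $e^{itH_x^r}$ becomes multiplication by $e^{it(2|\alpha|+d)^r}$. Part (2) is then immediate, as the latter multipliers are unimodular for every $t$ and $r$; consequently $e^{itH_x^r}$ is a surjective isometry on $\ell^2_{\sqrt{N(\omega)}}(\mathbf N^d)$ and hence on $M^2_{(\omega)}(\mathbf R^d)$, with constants trivially independent of both parameters.

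For part (1) what remains is to verify the weight equivalence
\begin{equation*}
(2|\alpha|+d)^{2r}\, N_\alpha(\omega/\vartheta_r)\asymp N_\alpha(\omega),\qquad \alpha\in \mathbf N^d,
\end{equation*}
with implicit constants independent of $\alpha$ and (more delicately) of $r$. This is the main obstacle of the proof. The density $s^\alpha e^{-|s|_1}/\alpha!$ is a product of Gamma densities, so $|s|_1$ obeys a Gamma$(|\alpha|+d,1)$ law with mean $|\alpha|+d$ and standard deviation $O(\sqrt{|\alpha|+d})$. Moderateness of $\omega$ forces $\omega_0(2s)^2$ to vary slowly on this concentration scale, so the estimate reduces to controlling the ratio of weighted to unweighted moments of $(1+2|s|_1)^{\pm 1}$ of the appropriate order. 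A Laplace-type expansion provides the correct leading asymptotic, and the uniformity in $r$ is then obtained either by interpolating between the integer-order case (already available through the Proposition 1.34$'$ of \cite{AbCoTo} recalled at the start of Subsection \ref{subsec2.3}) and the half-integer case, or by direct tail bounds on the Gamma distribution that remain effective even when $r$ becomes large; this is the technical core of the argument and the step where moderate growth of $\omega$ is essential.
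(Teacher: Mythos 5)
Your overall strategy coincides with the paper's: Lemma \ref{Lemma:FracHarmOscModSp} is exactly the diagonal identification $\nm f{M^2_{(\omega )}}^2\asymp \sum _\alpha |c_h(f,\alpha )|^2N_\alpha (\omega )$ that you derive, the operators act diagonally by \eqref{Eq:HarmonicPowers} and \eqref{Eq:HarmonicPowersPropagator}, and part (2) follows from unimodularity of the multipliers exactly as you say. The only difference of route is that you re-derive the lemma from \eqref{Eq:BargmannSTFT} and the orthogonality of monomials against radial weights, whereas the paper cites \cite[Theorem 3.5]{Toft18}; your version is more self-contained and in fact more careful about the substitution $\rho _j=2|w_j|^2$ (the paper's formula \eqref{Eq:DiscWeightDef} has $\omega _0(r)$ where your computation produces $\omega _0(2s)$). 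Part (2) of your argument is complete.

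Part (1) is where there is a genuine gap, and it sits precisely at the step you label the technical core. The equivalence you propose to verify,
\begin{equation*}
(2|\alpha |+d)^{2r}\, N_\alpha (\omega /\vartheta _r)\asymp N_\alpha (\omega ),
\end{equation*}
is false, and the very Gamma-concentration heuristic you invoke shows it: since $|s|_1$ concentrates at $|\alpha |+d$ under the density $s^\alpha e^{-|s|_1}/\alpha !$, and $\vartheta _r$ corresponds to the radial factor $(1+2|s|_1)^{r/2}$ in your normalization, one gets $N_\alpha (\omega /\vartheta _r)\asymp (2|\alpha |+d)^{-r}N_\alpha (\omega )$, hence $(2|\alpha |+d)^{2r}N_\alpha (\omega /\vartheta _r)\asymp (2|\alpha |+d)^{r}N_\alpha (\omega )$ — off by a full factor $(2|\alpha |+d)^r$. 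The equivalence that the Laplace expansion actually delivers is the one with $\vartheta _{2r}$ in place of $\vartheta _r$, consistent with $H_x$ being a Shubin operator of order $2$ (already for $d=1$, $r=1$, $\omega =1$ the multiplier $(2\alpha +1)$ against the weight $\eabs \alpha ^{-1/2\cdot 2}$ shows $H_x:L^2\to M^2_{(1/\vartheta _2)}$, not to $M^2_{(1/\vartheta _1)}$). So either you have mis-stated the target equivalence, or the theorem's exponent is a misprint; in either case the claim is not established as written, and had you carried out the expansion rather than asserting that it "provides the correct leading asymptotic," the mismatch would have surfaced. Separately, the asserted uniformity in $r$ of the implicit constants is not obtained by the tools you name: the Gamma tail near $|s|_1=0$ contributes a term of size roughly $3^{-2r}/(|\alpha |+d)!$ to $N_\alpha (\omega /\vartheta _{2r})$, which for fixed $\alpha$ dominates $(2|\alpha |+d)^{-2r}N_\alpha (\omega )$ as $r\to \infty$, so the two-sided comparison constants cannot be taken independent of $r$ by this route; interpolation from integer $r$ does not obviously rescue this either. (To be fair, the paper's own proof of (1) consists of the single sentence that the assertion "follows from Lemma \ref{Lemma:FracHarmOscModSp}" and does not address any of this; but your write-up commits to a specific false equivalence, which is a concrete error rather than mere terseness.)
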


\par

The following lemma shows that $M^{2}_{(\omega )}(\rr d)$
with weights of the form  \eqref{Eq:RotInvWeights} can be
expressed as norm estimates of Hermite
series expansions of the involved functions and distributions.
Here we extract the weight function
$$
\nu _\omega : \nn d \to \mathbf R_+,
$$
from $\omega$ by the formula
\begin{equation}\label{Eq:DiscWeightDef}
\begin{aligned}
\nu _\omega (\alpha )
&\equiv
\left (
\alpha !^{-1}
\int _{\rr d_+}r^\alpha \omega _0(r)^2e^{-(r_1+\cdots +r_d)}
\, dr \right )^{\frac 12},\qquad 
\alpha \in \nn d.
\end{aligned}
\end{equation}

\par

\begin{lemma}\label{Lemma:FracHarmOscModSp}
Let $\omega \in \mascP _E(\rr {2d})$ be such that
\eqref{Eq:RotInvWeights} holds for some positive function $\omega _0$
on ${\overline {\mathbf R}}_+^d$. Also let $\nu _\omega$ be given by
\eqref{Eq:DiscWeightDef}.
Then $M^{2}_{(\omega )}(\rr d)$
consists of all $f\in \mascS '(\rr d)$ such that
\begin{equation}\label{Eq:FracHarmOscModSp}
\nm f{[\omega ]}
\equiv
\left ( \sum _{\alpha \in \nn d} |c_h(f,\alpha )\nu _\omega (\alpha )|^2\right )^{\frac 12}
\end{equation}
is finite. The Hilbert norm $\nm \cdo{[\omega ]}$ is equivalent to
$\nm \cdo {M^{2}_{(\omega )}}$.
\end{lemma}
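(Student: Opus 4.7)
The plan is to transfer the entire computation to the Bargmann side and exploit the rotational symmetry of the weight. The key observation is that under the substitution $w = 2^{-1/2}\bar z$ (with $z = x + i\xi$), the weight $\omega (x,\xi ) = \omega _0(\rho )$ with $\rho _j = x_j^2 + \xi _j^2$ becomes $\omega _0(2|w_1|^2, \ldots , 2|w_d|^2)$, which is invariant under independent rotations $w_j \mapsto e^{i\theta _j} w_j$ in each complex coordinate. This is exactly what forces the monomials $\{e_\alpha\}$ to be orthogonal in the resulting weighted $L^2$ space.

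First I would fix the Gaussian window $\phi (x) = \pi ^{-\frac d4} e^{-\frac 12 |x|^2}$, which lies in $\Sigma _1(\rr d)$, so by Proposition \ref{p1.4B}(2) yields an equivalent norm on $M^2_{(\omega )}(\rr d)$. Using \eqref{Eq:BargmannSTFT} and changing variables $w = 2^{-\frac 12}\bar z$ (whose real Jacobian contributes $2^d$, with $|z|^2 = 2|w|^2$ and $\rho _j = 2|w_j|^2$), one obtains
$$
\nm f{M^{2}_{(\omega )}}^2
\asymp
\int _{\cc d} |\mathfrak V_df(w)|^2 e^{-|w|^2}
\omega _0(2|w_1|^2,\ldots ,2|w_d|^2)^2 \, d\lambda (w).
$$

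Next I would expand $\mathfrak V_df = \sum _{\alpha} c_h(f,\alpha )e_\alpha$ via \eqref{BargmannHermite} and substitute into the integral. Because the weighted measure is invariant under rotations in each $w_j$ separately, passing to polar coordinates $w_j = r_je^{i\theta _j}$ and performing the angular integrations $\int _0^{2\pi} e^{i(\alpha _j-\beta _j)\theta _j}\, d\theta _j = 2\pi \delta _{\alpha _j,\beta _j}$ annihilates all off-diagonal terms. What remains is a diagonal sum of $|c_h(f,\alpha )|^2$ against radial integrals of the form $\alpha !^{-1}\int r^{2\alpha} e^{-|r|^2_2} \omega _0(2r_1^2,\ldots ,2r_d^2)^2 \prod r_j\, dr_j$, and the substitution $u_j = 2 r_j^2$ (so that $u_j$ plays the role of $\rho _j$) converts each such integral, up to a purely dimensional constant, into $\nu _\omega (\alpha )^2$ as given in \eqref{Eq:DiscWeightDef}. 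This produces
$$
\nm f{M^{2}_{(\omega )}}^2
\asymp
\sum _{\alpha \in \nn d} |c_h(f,\alpha )\nu _\omega (\alpha )|^2
=
\nm f{[\omega ]}^2 ,
$$
with implicit constants depending only on $d$.

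The steps above carry no real conceptual obstacle; the main technical care goes into tracking constants through the three changes of variable and verifying that $\nu _\omega (\alpha )<\infty$ for every $\alpha$ (which is immediate from \eqref{WeightExpEst}, since the exponential weight $e^{-(r_1+\cdots +r_d)}$ dominates any factor $e^{r|r|^{\frac 12}}$ at infinity). The identity is first established on the dense subspace $\maclH _0(\rr d)$ of finite Hermite expansions, where the termwise manipulation of the power series is unconditionally justified, and then extended to all of $M^{2}_{(\omega )}(\rr d)$ using the density of $\Sigma _1(\rr d)\supseteq \maclH _0(\rr d)$ in $M^2_{(\omega )}(\rr d)$ provided by Proposition \ref{p1.4B}(1). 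In particular, the computation reveals that $\{h_\alpha /\nu _\omega (\alpha )\}_{\alpha \in \nn d}$ is (up to equivalence of norms) an orthonormal Schauder basis of the Hilbert space $M^{2}_{(\omega )}(\rr d)$, which gives the stated characterization.
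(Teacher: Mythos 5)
Your strategy --- pass to the Bargmann side via \eqref{Eq:BargmannSTFT}, use the rotational invariance of the weight in each complex coordinate to diagonalize over the monomials $e_\alpha$, and read off the diagonal coefficients --- is exactly the route the paper intends (it delegates the computation to Theorem 3.5 in \cite{Toft18} together with the identification of the Bargmann image of $M^2_{(\omega )}$), and everything up to and including the angular integration is correct. The problem is the final substitution. Carrying out $u_j=2r_j^2$ in
\begin{equation*}
\alpha !^{-1}\int r^{2\alpha}e^{-(r_1^2+\cdots +r_d^2)}
\omega _0(2r_1^2,\dots ,2r_d^2)^2\prod _j r_j\, dr_j
\end{equation*}
gives $4^{-d}2^{-|\alpha |}\alpha !^{-1}\int u^\alpha e^{-\frac 12(u_1+\cdots +u_d)}\omega _0(u)^2\, du$, which is \emph{not} $\nu _\omega (\alpha )^2$ up to a dimensional constant: the exponential is $e^{-\frac 12\sum u_j}$ rather than $e^{-\sum u_j}$, and there is an extra $2^{-|\alpha |}$. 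Equivalently, the substitution $u_j=r_j^2$ shows that the diagonal coefficient you actually obtain is, up to the factor $2^{-d}$, equal to $\alpha !^{-1}\int u^\alpha e^{-\sum u_j}\omega _0(2u)^2\, du$, i.e. $\nu _\omega (\alpha )^2$ computed with $\omega _0(2\, \cdot \,)$ in place of $\omega _0$.

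For $\omega _0\equiv 1$ the two quantities agree up to $2^{-d}$ (which is why the sanity check $\nu _\omega (\alpha )=1=\nm {h_\alpha}{M^2}$ works), and whenever $\omega _0(2\rho )\asymp \omega _0(\rho )$ --- in particular for $\omega \in \mascP (\rr {2d})$ --- the equivalence survives. But the lemma is stated for all $\omega \in \mascP _E(\rr {2d})$ satisfying \eqref{Eq:RotInvWeights}, and this class contains $\omega (x,\xi )=e^{c|(x,\xi )|}$, i.e. $\omega _0(\rho )=e^{c(\rho _1+\cdots +\rho _d)^{1/2}}$; there the ratio of the two diagonal coefficients grows like $e^{2(\sqrt 2-1)c|\alpha |^{1/2}}$ (both integrals concentrate near $u\approx \alpha$), so no uniform constant exists. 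Hence the phrase ``up to a purely dimensional constant'' is where your argument breaks: what you have proved is the norm equivalence with the discrete weight $\alpha \mapsto \big (\alpha !^{-1}\int u^\alpha e^{-\sum u_j}\omega _0(2u)^2\, du\big )^{1/2}$ rather than with $\nu _\omega$. You should either restrict to weights with $\omega _0(2\rho )\asymp \omega _0(\rho )$ or record the corrected diagonal coefficient. The remaining ingredients --- finiteness of $\nu _\omega (\alpha )$ from \eqref{WeightExpEst}, establishing the identity on $\maclH _0(\rr d)$ and extending by density and monotone convergence --- are fine.
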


\par

\begin{proof}
The result follows by straight-forward applications of
\cite[Theorem 3.5]{Toft18}, and that the Bargmann image
of $M^2_{(\omega)}(\rr d)$ is equal to $A^2_{(\omega )}(\cc d)$.
The details are left for the reader.
\end{proof}

\par

\begin{proof}[Proof of Theorem \ref{Thm:FracHarmOscModSp}]
Let $\nm \cdo{[\omega ]}$ be the norm given in \eqref{Eq:FracHarmOscModSp}.
Then \eqref{Eq:HarmonicPowers} shows that
\begin{align*}
c_h(H_x^r f,\alpha )&=(2|\alpha |+d)^rc(f,\alpha )
\intertext{and}
c_h(e^{itH_x^r} f,\alpha )&=e^{it(2|\alpha |+d)^r}c(f,\alpha ).
\end{align*}
Since $|e^{it(2|\alpha |+d)^r}|=1$ and that $\omega$ satisfies
\eqref{Eq:RotInvWeights}, the assertion follows from Lemma
\ref{Lemma:FracHarmOscModSp}.
\end{proof}

\par

\begin{rem}
So far we are not able to extend
Theorem \ref{Thm:FracHarmOscModSp} to more general
modulation spaces $M^{p,q}_{(\omega)}(\rr d)$, without
the assumption \eqref{Eq:RotInvWeights} on
the weight $\omega$. Suppose that $\omega$
is the same as in Theorem \ref{Thm:FracHarmOscModSp}.
Then using standard embeddings for modulation spaces like
\begin{equation}\label{Eq:ModEmb}
\begin{aligned}
M^{p,q}_{(\omega \vartheta _{\tau _1})}(\rr d)
&\hookrightarrow
M^{2}_{(\omega )}(\rr d )
\hookrightarrow
M^{p,q}_{(\omega /\vartheta _{\tau _2})}(\rr d),
\\[1ex]
\tau _1 &\ge d \max \Big (0, \frac 12-\frac 1p,\frac 12-\frac 1q \Big ),
\\[1ex]
\tau _2 &\ge d \max \Big (0, \frac 1p-\frac 12,\frac 1q-\frac 12 \Big ),
\end{aligned}
\end{equation}
with strict inequalities if $p\neq 2$ or $q\neq 2$, it follows that
\begin{equation}
H^{r_0}e^{itH^r} : M^{p,q}_{(\omega )}(\rr d)
\to
M^{p,q}_{(\omega /\vartheta _\theta )}(\rr d),
\quad
\theta = r_0+\tau _1+\tau _2
\end{equation}
is continuous, with bounds which are independent
of $r$. In fact, by
Theorem \ref{Thm:FracHarmOscModSp} and
\eqref{Eq:ModEmb} we have
\begin{align*}
H^{r_0}e^{itH^r}M^{p,q}_{(\omega )}(\rr d)
&\hookrightarrow
H^{r_0}e^{itH^r}M^{2}_{(\omega /\vartheta _{\tau _1})}(\rr d)
\\[1ex]
&\hookrightarrow
M^{2}_{(\omega /\vartheta _{r_0+\tau _1})}(\rr d)
\hookrightarrow
M^{p,q}_{(\omega /\vartheta _{\theta})}(\rr d).
\end{align*}

\end{rem}

\par

\section{General harmonic oscillator propagators
and fractional Fourier transforms}\label{sec3}

\par

In this section we prove that generalized
harmonic oscillator propagators of
the forms $e^{irH_{x,\varrho ,c}}$ are essentially 
fractional Fourier transforms of multiple orders.
We use such identities to link some results in \cite{Bhi}
with results in \cite{Toft18}, especially when such 
operators act on (weighted) modulation spaces.

\par

\subsection{Identifications between
fractional Fourier transforms and harmonic
oscillator type propagators}

\par

Let $\varrho \in \cc d$ and $c\in \mathbf C$.
Then the operator
$H_{x,\varrho ,c}$ is transformed into the operator
\begin{align}
2H _{\varrho ,c,\mathfrak V} &=
2H_{\varrho ,\mathfrak V} +\sumvec (\varrho )+c,
\quad \text{where}\quad
H_{\varrho ,\mathfrak V} 
= \sum _{j=1}^d \varrho _j z_j\partial _{z_j}.
\notag
\intertext{That is,}
\mathfrak V_d \circ H_{x,\varrho ,c}
&= (2H_{\varrho ,\mathfrak V} +\sumvec (\varrho )+c)\circ \mathfrak V_d,
\quad
H_{\varrho ,\mathfrak V} 
= \sum _{j=1}^d \varrho _j z_j\partial _{z_j},
\label{Eq:HarmOscBargmTransf}
\intertext{or equivalently,}
\mathfrak V_d H_{x,\varrho ,c}f 
&= (2H_{\varrho ,\mathfrak V} +\sumvec (\varrho )+c)F,
\qquad
F=\mathfrak V_df.
\notag
\end{align}
For convenience we put
$$
H _{\varrho _0,c,\mathfrak V} 
= 
H _{\varrho ,c,\mathfrak V}
\quad \text{and}\quad 
H _{\varrho _0,\mathfrak V} 
= 
H _{\varrho ,\mathfrak V},
$$
when $\varrho =(\varrho _0,\dots ,\varrho _0)\in \cc d$,
and we put
$$
H _{\mathfrak V}=H _{1,\mathfrak V}.
$$
In particular, it follows from 
\eqref{Eq:HarmOscBargmTransf} that
$\frac 12H_{x,-d}$ is transformed into 
$H_{\mathfrak V}$. It also follows from
\eqref{Eq:HarmOscBargmTransf} that if $r,\zeta \in \mathbf C$
satisfy $\RE(r)\ge 0$, then
\begin{equation}
\label{Eq:HarmOscExpBargmTransf}
\mathfrak V_d \circ e^{\zeta H_{x,\varrho ,c}^r}
=
e^{\zeta (2H_{\varrho ,\mathfrak V} +\sumvec (\varrho )+c)^r}
\circ \mathfrak V_d,
\end{equation}
as continuous operators from $\maclH _0'(\rr d)$ 
to $\maclA _0'(\cc d)$.

\par

By straight-forward computations we get
$$
H_{\varrho ,\mathfrak V}e_\alpha (z) 
= \scal \varrho \alpha e_\alpha (z)
$$
(see e.{\,}g. \cite{Ba1}). This implies that
\begin{align}
e^{\zeta H_{\varrho ,\mathfrak V}}e_\alpha (z)
&=
e^{\zeta \scal \varrho \alpha }e_{\alpha}(z)
=
e_{\alpha}(e^{\zeta \varrho _1}z_1,\dots ,
e^{\zeta \varrho _d}z_d) ,
\label{Eq:HarmPropBargmannPrel}
\intertext{for every $\zeta \in \mathbf C$,
which gives}
(e^{\zeta H_{\varrho ,\mathfrak V}}F)(z)
&=
F(e^{\zeta \varrho _1}z_1,\dots ,
e^{\zeta \varrho _d}z_d) ,
\qquad F\in \maclA _0'(\cc d).
\label{Eq:HarmPropBargmann}
\end{align}

\par

\begin{rem}
We observe that the map
$$
F(z)\mapsto (e^{\zeta H_{\varrho ,\mathfrak V}}F)(z)
=
F(e^{\zeta \varrho _1}z_1,\dots ,
e^{\zeta \varrho _d}z_d)
$$
is a continuous bijection on the spaces
\begin{equation}\label{Eq:BargPilSpaces}
\maclA _{s_1}(\cc d), \quad \maclA _{0,s_2}(\cc d),
\quad
\maclA _{s_1}'(\cc d), \quad \maclA _{0,s_2}'(\cc d),
\end{equation}
when $s_1<\frac 12$ and $s_2\le \frac 12$. See 
\cite{Toft18}
for definition and some characterizations of the spaces
in \eqref{Eq:BargPilSpaces}. This is also a consequence of
Proposition \ref{Prop:BargPilSpaces}.
\end{rem}

\par

We recall that the fractional Fourier transform 
$\mascF _{\! \varrho}$ of (multiple) order
$\varrho \in \cc d$ satisfies
\begin{equation}\label{Eq:BargmannActFracFourTr}
(\mathfrak V_d(\mascF _{\! \varrho} f))(z) = (\mathfrak 
V_df)(e^{-i\frac {\pi \varrho _1}2}z_1,\dots ,
e^{-i\frac {\pi \varrho _d}2}z_d),
\qquad f\in \maclH _0'(\rr d).
\end{equation}
Hence, a combination of Proposition 
\ref{Prop:BargPilSpaces}$'$,
\eqref{Eq:HarmOscBargmTransf},
\eqref{Eq:HarmOscExpBargmTransf}
and \eqref{Eq:HarmPropBargmann} with
$$
e^{-iH_{x,\varrho ,c_1}}
=
e^{i(c_2-c_1)}e^{-iH_{x,\varrho ,c_2}} ,
\qquad c_1,c_2\in \mathbf C
$$
gives the following extention of results given in
\cite[p. 161]{KutOza}.

\par

\begin{thm}\label{Thm:HarmPropFracFourT}
Let $\varrho \in \cc d$, $c\in \mathbf C$ and $s,s_1,s_2
\in \overline {\mathbf R_{\flat}}$ be such that $0<s_1\le \frac 1{2}$
and $s_2< \frac 1{2}$. Then
\begin{equation}\label{Eq:FracFourTransHarmProp}
e^{-i\frac {\pi}4 H_{x,\varrho ,c}}
= e^{-i\frac \pi 4(\sumvec (\varrho )+c)} \mascF _{\! \varrho}
\end{equation}
as operators on $\maclH _0'(\rr d)$. The operators in
\eqref{Eq:FracFourTransHarmProp} restrict to
homeomorphisms on the spaces in \eqref{Eq:Pilspaces2}.
\end{thm}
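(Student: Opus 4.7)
The plan is to verify the operator identity \eqref{Eq:FracFourTransHarmProp} by conjugating both sides with the Bargmann transform $\mathfrak V_d$ and reducing the claim to an identity on $\maclA_0'(\cc d)$, where both sides become simple substitution-type operators whose equality is transparent. Since $\mathfrak V_d$ is a homeomorphism from $\maclH_0'(\rr d)$ to $\maclA_0'(\cc d)$, the original identity on $\maclH_0'(\rr d)$ will follow once established on the Bargmann side.

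First I would apply \eqref{Eq:HarmOscExpBargmTransf} with $r=1$ and $\zeta = -i\pi/4$ to obtain
\begin{equation*}
\mathfrak V_d \circ e^{-i\frac{\pi}{4} H_{x,\varrho,c}}
=
e^{-i\frac{\pi}{4}(2 H_{\varrho,\mathfrak V} + \sumvec(\varrho) + c)} \circ \mathfrak V_d.
\end{equation*}
Since the scalar $\sumvec(\varrho)+c$ commutes with $H_{\varrho,\mathfrak V}$, the right-hand side factors as
\begin{equation*}
e^{-i\frac{\pi}{4}(\sumvec(\varrho)+c)} \cdot e^{-i\frac{\pi}{2} H_{\varrho,\mathfrak V}} \circ \mathfrak V_d.
\end{equation*}
Next, by \eqref{Eq:HarmPropBargmann} applied with $\zeta = -i\pi/2$, the operator $e^{-i\frac{\pi}{2}H_{\varrho,\mathfrak V}}$ acts on $F \in \maclA_0'(\cc d)$ as the coordinate rotation $F(z) \mapsto F(e^{-i\pi\varrho_1/2}z_1, \ldots, e^{-i\pi\varrho_d/2}z_d)$, which by \eqref{Eq:BargmannActFracFourTr} is exactly $\mathfrak V_d \mascF_{\!\varrho}$ evaluated at the corresponding element of $\maclH_0'(\rr d)$. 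Combining these steps,
\begin{equation*}
\mathfrak V_d \circ e^{-i\frac{\pi}{4} H_{x,\varrho,c}}
=
e^{-i\frac{\pi}{4}(\sumvec(\varrho)+c)}\, \mathfrak V_d \circ \mascF_{\!\varrho},
\end{equation*}
and cancelling $\mathfrak V_d$ on the left (permissible because it is a homeomorphism) yields \eqref{Eq:FracFourTransHarmProp}.

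For the final assertion about homeomorphisms on the spaces in \eqref{Eq:Pilspaces2}, I would simply invoke Proposition \ref{Prop:BargPilSpaces}$'$ part (2) with $r=1$, $\zeta = -i\pi/4$, noting that the hypotheses $0 < s_1 \le \tfrac{1}{2r} = \tfrac{1}{2}$ and $s_2 < \tfrac{1}{2r} = \tfrac{1}{2}$ are precisely what is assumed; multiplication by the nonzero scalar $e^{-i\frac{\pi}{4}(\sumvec(\varrho)+c)}$ preserves the homeomorphism property. No step here looks like a real obstacle: the proof is essentially bookkeeping around the three identities \eqref{Eq:HarmOscExpBargmTransf}, \eqref{Eq:HarmPropBargmann} and \eqref{Eq:BargmannActFracFourTr}, and the mildest care is needed only in making sure the factor $\sumvec(\varrho)+c$ is correctly split off before comparing the remaining exponential $e^{-i\frac{\pi}{2}H_{\varrho,\mathfrak V}}$ with the Bargmann description of the fractional Fourier transform.
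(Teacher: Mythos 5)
Your proposal is correct and follows essentially the same route as the paper, which obtains the identity precisely by combining \eqref{Eq:HarmOscExpBargmTransf} (with $r=1$, $\zeta=-i\frac{\pi}{4}$), \eqref{Eq:HarmPropBargmann} and \eqref{Eq:BargmannActFracFourTr} on the Bargmann side and then invoking Proposition \ref{Prop:BargPilSpaces}$'$ for the homeomorphism statement. The only cosmetic difference is that the paper also records the trivial rescaling $e^{-iH_{x,\varrho ,c_1}}=e^{i(c_2-c_1)}e^{-iH_{x,\varrho ,c_2}}$, which your splitting of the scalar $\sumvec(\varrho)+c$ already accounts for.
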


\par

Evidently, \eqref{Eq:FracFourTransHarmProp}
is the same as
\begin{equation}\tag*{(\ref{Eq:FracFourTransHarmProp})$'$}
\mascF _{\! \varrho}
=
e^{i\frac \pi 4(\sumvec (\varrho )+c)}
e^{-i\frac {\pi}4 H_{x,\varrho ,c}},
\end{equation}
and can be used to transfer
properties between fractional Fourier transform and harmonic
oscillator propagators.

\par

By combining    with Theorem \ref{Thm:HarmPropFracFourT},
we get the following extensions of Propositions
\ref{Prop:IntroFracFourComplOrder} and
\ref{Prop:IntroFracFourComplOrderRefine} from the introduction.
The details are left for the reader.

\par

\renewcommand{\rubrik}{Proposition
\ref{Prop:IntroFracFourComplOrder}$'\!\!$}

\par

\begin{tom}
Let $I_d=\{ 1,\dots ,d\}$, $\varrho \in \cc d$ and
$s\in \overline{\mathbf R}_\flat$.
Then the following is true:
\begin{enumerate}
\item if $s< \frac 12$, then $\mascF _{\! \varrho}$
and $e^{-i\frac \pi 4H_{x,\varrho}}$ are homeomorphisms on
$\maclH _s(\rr d)$ and on $\maclH _s'(\rr d)$;

\vrum

\item if $\IM (\varrho _j)\le 0$ for every $j\in I_d$,
$\IM (\varrho _{j_0})< 0$ for some $j_0\in I_d$
and $s\ge \frac 12$, then
$\mascF _{\! \varrho}$
and $e^{-i\frac \pi 4H_{x,\varrho}}$ are continuous
injections but not surjections on
$\maclH _s(\rr d)$, $\mascS (\rr d)$, $\mascS '(\rr d)$
and on $\maclH _s'(\rr d)$;

\vrum

\item if $\IM (\varrho _j)=0$ for every $j\in I_d$, then
$\mascF _{\! \varrho}$
and $e^{-i\frac \pi 4H_{x,\varrho}}$ are homeomorphisms
on $\maclH _s(\rr d)$, $\mascS (\rr d)$, $\mascS '(\rr d)$
and on $\maclH _s'(\rr d)$;

\vrum

\item if $\IM (\varrho _j)>0$ for some $j\in I_d$ and
$s\ge \frac 12$, then $\mascF _{\! \varrho}$
and $e^{-i\frac \pi 4H_{x,\varrho}}$ are discontinuous on
$\maclH _s(\rr d)$, $\mascS (\rr d)$, $\mascS '(\rr d)$
and on $\maclH _s'(\rr d)$.
\end{enumerate}
The same holds true with $s> \frac 12$, $s\le \frac 12$
and $\maclH _{0,s}$ in place of $s\ge \frac 12$, $s< \frac 12$
and $\maclH _s$ at each occurrence.
\end{tom}

\par

\renewcommand{\rubrik}{Proposition
\ref{Prop:IntroFracFourComplOrderRefine}$'\!\!$}

\par

\begin{tom}
Let $I_d=\{ 1,\dots ,d\}$ and $\varrho \in \cc d$. Then the following is true:
\begin{enumerate}
\item if $\IM (\varrho _j)<0$ for every $j\in I_d$, then $\mascF _{\! \varrho}$
and $e^{-i\frac \pi 4H_{x,\varrho}}$ are continuous from
$\maclS _{1/2}'(\rr d)$ to $\maclS _{1/2}(\rr d)$, and
$$
\mascF _{\! \varrho} (\maclS _{1/2}'(\rr d)) =
e^{-i\frac \pi 4H_{x,\varrho}} (\maclS _{1/2}'(\rr d)) \subsetneq \maclS _{1/2}(\rr d)
\text ;
$$

\item if $\IM (\varrho _j)>0$ for every $j\in I_d$,
then $\mascF _{\! \varrho}$
and $e^{-i\frac \pi 4H_{x,\varrho}}$ are discontinuous from
$\maclS _{1/2}(\rr d)$ to $\maclS _{1/2}'(\rr d)$, and
$$
\maclS _{1/2}'(\rr d) \subsetneq
\mascF _{\! \varrho} (\maclS _{1/2}(\rr d))
=
e^{-i\frac \pi 4H_{x,\varrho}}(\maclS _{1/2}(\rr d))
\subsetneq
\maclH _{0,1/2}'(\rr d)\text ;
$$

\item if $\IM (\varrho _j)>0$ for some $j\in I_d$,
then $\mascF _{\! \varrho}$
and $e^{-i\frac \pi 4H_{x,\varrho}}$ are discontinuous from
$\maclS _{1/2}(\rr d)$ to $\maclS _{1/2}'(\rr d)$, and
$$
\mascF _{\! \varrho} f \in \maclH _{0,1/2}'(\rr d)\setminus \maclS _{1/2}'(\rr d)
\quad \text{and}\quad
e^{-i\frac \pi 4H_{x,\varrho}} f \in
\maclH _{0,1/2}'(\rr d)\setminus \maclS _{1/2}'(\rr d)
$$
for some $f\in \maclS _{1/2}(\rr d)$.
\end{enumerate}
\end{tom}

\par

In the following proposition we point out some auxiliary
group properties for fractional Fourier transforms of complex orders,
which extends similar results in \cite{KutOza} for fractional
Fourier transforms of real orders. The result follows by
straight-forward applications of
\eqref{Eq:BargmannActFracFourTr} and the fact that the Bargmann
transform is injective. The details are left for the reader.

\par

%
%
%
%
%
%

\begin{prop}
For any $\varrho \in \cc d$, let $\mascF _{\! \varrho}$ be acting on
$\maclH _0'(\rr d)$. Then $\{ \mascF _{\! \varrho} \} _{\varrho \in \cc d}$
is a commutative
group under composition, with identity element $\mascF _{0}
=\operatorname{Id}_{\maclH _0'(\rr d)}$, and
\begin{equation}
\begin{aligned}
\mascF _{\! \varrho _1}\circ \mascF _{\! \varrho _2}
= \mascF _{\! \varrho _1+\varrho _2}, \qquad
\mascF _{\! \varrho}^{-1} = \mascF _{\! -\varrho},
\qquad
\mascF _{\! \varrho + \varrho _0}
&= \mascF _{\! \varrho},
\\[1ex]
\varrho ,\varrho _1,\varrho _2 &\in \cc d, \ 
\varrho _0\in 4\zz d.
\end{aligned}
\end{equation}
\end{prop}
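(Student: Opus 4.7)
The plan is to reduce every claim to a straightforward computation on the Bargmann side, then transfer it back via the injectivity (in fact bijectivity) of the Bargmann transform from $\maclH _0'(\rr d)$ to $\maclA _0'(\cc d)$, which is part of the proposition recorded earlier in the excerpt. Fix the notation $\Phi _{\varrho}(z) \equiv (e^{-i\frac {\pi \varrho _1}2}z_1,\dots ,e^{-i\frac {\pi \varrho _d}2}z_d)$ for the coordinate dilation appearing in \eqref{Eq:BargmannActFracFourTr}, so that the defining identity of the fractional Fourier transform becomes
\begin{equation*}
\mathfrak V_d\circ \mascF _{\! \varrho} = U_\varrho \circ \mathfrak V_d,
\qquad
(U_\varrho F)(z)=F(\Phi _\varrho (z)),
\end{equation*}
on $\maclH _0'(\rr d)$.

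First I would verify the composition rule. Since $\Phi _{\varrho _1}\circ \Phi _{\varrho _2}=\Phi _{\varrho _1+\varrho _2}$ holds componentwise from $e^{-i\frac {\pi \varrho _{1,j}}2}e^{-i\frac {\pi \varrho _{2,j}}2}=e^{-i\frac {\pi (\varrho _{1,j}+\varrho _{2,j})}2}$, I get $U_{\varrho _1}\circ U_{\varrho _2}=U_{\varrho _1+\varrho _2}$ on $\maclA _0'(\cc d)$, and hence
\begin{equation*}
\mathfrak V_d\circ (\mascF _{\! \varrho _1}\circ \mascF _{\! \varrho _2})
=
U_{\varrho _1+\varrho _2}\circ \mathfrak V_d
=
\mathfrak V_d\circ \mascF _{\! \varrho _1+\varrho _2}.
\end{equation*}
Injectivity of $\mathfrak V_d$ then yields $\mascF _{\! \varrho _1}\circ \mascF _{\! \varrho _2}=\mascF _{\! \varrho _1+\varrho _2}$. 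Commutativity is immediate from the commutativity of addition in $\cc d$, so $\{ \mascF _{\! \varrho}\} _{\varrho \in \cc d}$ is abelian under composition as soon as the closure property is known.

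Next, the identity element: since $\Phi _0=\operatorname{Id}_{\cc d}$, we have $U_0=\operatorname{Id}_{\maclA _0'(\cc d)}$, whence $\mathfrak V_d\circ \mascF _{\! 0}=\mathfrak V_d$ and therefore $\mascF _{\! 0}=\operatorname{Id}_{\maclH _0'(\rr d)}$ by injectivity. The inverse relation $\mascF _{\! \varrho}^{-1}=\mascF _{\! -\varrho}$ is then a direct consequence of the composition rule applied with $\varrho _1=\varrho$, $\varrho _2=-\varrho$. For the periodicity statement, if $\varrho _0\in 4\zz d$ then every $e^{-i\frac {\pi \varrho _{0,j}}2}=e^{-2\pi i k_j}=1$, so $\Phi _{\varrho _0}=\operatorname{Id}$, giving $U_{\varrho +\varrho _0}=U_\varrho$ and, after transferring through $\mathfrak V_d$, $\mascF _{\! \varrho +\varrho _0}=\mascF _{\! \varrho}$.

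There is no real obstacle here; the only thing to be careful about is that all four operators $\mascF _{\! \varrho _1}$, $\mascF _{\! \varrho _2}$, $\mascF _{\! \varrho _1+\varrho _2}$ and $U_{\varrho _j}$ are already known to be continuous on $\maclH _0'(\rr d)$ respectively $\maclA _0'(\cc d)$ by the extension of \eqref{Eq:BargmannActFracFourTr} to arbitrary $\varrho \in \cc d$ recorded at the end of Subsection \ref{subsec1.7}, so that all identities above make sense in the appropriate function/distribution spaces and the appeal to injectivity of $\mathfrak V_d$ is legitimate.
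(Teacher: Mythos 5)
Your proposal is correct and is precisely the argument the paper intends: the text preceding the proposition states that the result "follows by straight-forward applications of \eqref{Eq:BargmannActFracFourTr} and the fact that the Bargmann transform is injective," which is exactly your reduction to the dilation operators $U_\varrho$ on the Bargmann side followed by cancellation of $\mathfrak V_d$. Your write-up simply supplies the details the paper leaves to the reader, including the correct observation that $\mascF_{\!0}$ and the $4\zz d$-periodicity both reduce to $\Phi_{\varrho_0}=\operatorname{Id}$.
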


\par

\subsection{Continuity for one-parameters
fractional Fourier transforms and
harmonic oscillator propagators on modulation spaces}
As an example we shall next transfer
mapping properties of $\mascF _{\! \varrho}$,
$\varrho \in \mathbf R$,
when acting on certain classes
of modulation spaces 
into analogous properties
for harmonic oscillator propagators.

\par

For fractional Fourier transforms on modulation
spaces we recall the following special case of 
\cite[Proposition 7.1]{Toft18}. We refer to
Subsection \ref{subsec1.4} for notations on
weight classes.

\par

\begin{prop}\label{Prop:FracFourMod}
Let $\varrho \in \mathbf R$, $p\in (0,\infty ]$
and $\omega \in \mascP _{\! A,r}(\rr {2d})$.
Then $\mascF _{\! \varrho}$ is an isometric homeomorphism on $M^p_{(\omega )}(\rr d)$.
\end{prop}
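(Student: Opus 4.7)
The plan is to reduce everything to the explicit identity \eqref{Eq:STFTFracFour} linking the short-time Fourier transform of $\mascF_{\!\varrho} f$ with that of $f$, and then exploit the rotational symmetry of the weight $\omega \in \mascP_{\!A,r}(\rr{2d})$. More precisely, I would fix the Gaussian window $\phi(x)=\pi^{-d/4}e^{-|x|^2/2}$ used in that identity. Since $\phi \in \Sigma_1(\rr d)\subseteq M^r_{(v)}(\rr d)$ for any reasonable submultiplicative $v$, Proposition \ref{p1.4B}(2) guarantees that this particular choice of window computes $\nm{\cdot}{M^p_{(\omega)}}$ up to equivalent norms (and with this fixed window one actually obtains equality, not merely equivalence, which is what is needed for the isometric statement).

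Next I would invoke \eqref{Eq:STFTFracFour}, which for $\varrho \in \mathbf R$ reads
\begin{equation*}
(V_\phi(\mascF_{\!\varrho} f))(x,\xi) = e^{i\frac{1}{4}\Phi_\varrho(x,\xi)}\,V_\phi f(A_{d,\varrho}(x,\xi)),
\end{equation*}
so that $|V_\phi(\mascF_{\!\varrho} f)(x,\xi)| = |V_\phi f(A_{d,\varrho}(x,\xi))|$. The key observation is that $A_{d,\varrho}$ acts on each pair $(x_j,\xi_j)$ as the rotation $R_{\theta}$ with angle $\theta = \pi\varrho/2$, and therefore preserves every quantity $\rho_j = x_j^2+\xi_j^2$. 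Since $\omega \in \mascP_{\!A,r}(\rr{2d})$ is by definition of the form $\omega(x,\xi)=\omega_0(\rho_1,\dots,\rho_d)$, it follows that $\omega \circ A_{d,\varrho} = \omega$ pointwise.

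Combining the pointwise identity $|V_\phi(\mascF_{\!\varrho} f)\cdot \omega|(x,\xi) = (|V_\phi f|\cdot \omega)(A_{d,\varrho}(x,\xi))$ with the fact that $A_{d,\varrho}$ is an orthogonal transformation of $\rr{2d}$ and hence measure-preserving, a change of variables yields $\nm{\mascF_{\!\varrho} f}{M^p_{(\omega)}} = \nm{f}{M^p_{(\omega)}}$ for every $p \in (0,\infty]$, where the $L^\infty$-case is handled by the same invariance argument applied to the essential supremum. Finally, since $\varrho\mapsto \mascF_{\!\varrho}$ is a group homomorphism with $(\mascF_{\!\varrho})^{-1} = \mascF_{\!-\varrho}$ (as recalled in the paper via \eqref{Eq:BargmannActFracFourTr}), and $-\varrho \in \mathbf R$ also, the isometry statement for $-\varrho$ supplies continuity of the inverse, giving the isometric homeomorphism.

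The only real point that requires care is the identification $\omega \circ A_{d,\varrho} = \omega$; everything else is formal bookkeeping once \eqref{Eq:STFTFracFour} is in hand. There is no genuine obstacle: the argument is a clean transport of the action of $\mascF_{\!\varrho}$ on the time-frequency side to a symplectic rotation that the radially symmetric weight absorbs.
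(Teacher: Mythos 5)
Your argument is correct, and it is worth noting that the paper itself does not prove this proposition at all: it is simply quoted as a special case of \cite[Proposition 7.1]{Toft18}. What you have written is a self-contained proof, and it is in fact the diagonal case $p=q$ of the strategy the paper uses later for Theorems \ref{Thm:FrFTModSpec1} and \ref{Thm:FrFTModSpec2}: there one combines \eqref{Eq:STFTFracFour} with the Minkowski-type Lemma \ref{Lemma:MinkowskiSpec} to control the mixed norm $L^{p,q}$ under the rotation $A_{d,\varrho}$, at the cost of a factor $|\sin(\frac{\pi\varrho}{2})|^{d(\frac 1p-\frac 1q)}$ and a rotated weight $\omega_\varrho=\omega\circ A_{d,\varrho}$. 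In your setting $M^p_{(\omega)}=M^{p,p}_{(\omega)}$, so the mixed norm collapses to $L^p(\rr{2d})$, which is exactly invariant under the measure-preserving rotation, and the hypothesis $\omega\in\mascP_{\!A,r}(\rr{2d})$ forces $\omega\circ A_{d,\varrho}=\omega$; this is precisely why the loss factor disappears and one gets an isometry rather than a mere bound. Your two points of care are the right ones: the isometry is relative to the Gaussian window (which is the window fixed in Remark \ref{Rem:GeneralModSpaces} for weights in $\mascP_{\!A}$, so the statement is unambiguous), and the phase $\Phi_\varrho$ in \eqref{Eq:STFTFracFour} is real for real $\varrho$ because $U_{d,\varrho}$ preserves $|z|$, so only the chirp factors differ and $|V_\phi(\mascF_{\!\varrho}f)|=|V_\phi f|\circ A_{d,\varrho}$ pointwise. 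The inversion via $\mascF_{\!-\varrho}$ then gives the homeomorphism. No gaps.
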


\par

A combination of Theorem \ref{Thm:HarmPropFracFourT}
and Proposition \ref{Prop:FracFourMod} now gives the
following. (See \cite[Theorem 2.6]{Bhi} and
\cite[Theorem 1.7]{BhiBalTha}
in the case when $p\ge1$ and $\omega =1$. See also
\cite{CorGroNicRod} in the case when $p\ge 1$ and $\omega$
is moderatated by polynomially bounded weights.)

\par

\begin{prop}\label{Prop:HarmPropMod}
Let $c\in \mathbf C$, $\varrho \in \mathbf R$,
$p\in (0,\infty ]$ and $\omega \in
\mascP _{\! A,r}(\rr {2d})$.
Then $e^{-i\frac \pi 4 H_{x,\varrho ,c}}$ is a
homeomorphism on $M^p_{(\omega )}(\rr d)$.
\end{prop}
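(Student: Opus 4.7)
The plan is to reduce the statement immediately to the two preceding results, namely the identity in Theorem \ref{Thm:HarmPropFracFourT} and the modulation-space continuity in Proposition \ref{Prop:FracFourMod}.

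First I would invoke Theorem \ref{Thm:HarmPropFracFourT} (with the scalar $\varrho \in \mathbf R$ viewed as the tuple $(\varrho,\dots,\varrho)\in \cc d$, so that $\sumvec(\varrho) = d\varrho$) to rewrite
\begin{equation*}
e^{-i\frac{\pi}{4}H_{x,\varrho,c}} = e^{-i\frac{\pi}{4}(d\varrho + c)}\,\mascF_{\!\varrho}
\end{equation*}
as operators on $\maclH_0'(\rr d)$. Since the scalar factor $\lambda := e^{-i\frac{\pi}{4}(d\varrho + c)}$ is a nonzero complex number (its modulus is $e^{\frac{\pi}{4}\IM(c)}$), multiplication by $\lambda$ is trivially an isometric homeomorphism of every quasi-normed space, and in particular of $M^p_{(\omega)}(\rr d)$, up to the absolute constant $|\lambda|$.

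Next I would invoke Proposition \ref{Prop:FracFourMod}, which asserts that $\mascF_{\!\varrho}$ is an isometric homeomorphism on $M^p_{(\omega)}(\rr d)$ whenever $\varrho \in \mathbf R$, $p\in(0,\infty]$ and $\omega \in \mascP_{\!A,r}(\rr{2d})$. Composing with the scalar multiplication by $\lambda$ yields that $e^{-i\frac{\pi}{4}H_{x,\varrho,c}} = \lambda \mascF_{\!\varrho}$ is a homeomorphism on $M^p_{(\omega)}(\rr d)$, with inverse $\lambda^{-1}\mascF_{\!-\varrho}$ (which is also bounded by the same two ingredients applied to $-\varrho$).

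There is essentially no obstacle here: the genuine work has already been done in Theorem \ref{Thm:HarmPropFracFourT} (the identification of the harmonic oscillator propagator with a fractional Fourier transform, via the Bargmann transform) and in Proposition \ref{Prop:FracFourMod} (the modulation-space continuity of $\mascF_{\!\varrho}$ for rotation-invariant weights). The only minor point worth flagging is that one must check that the identity in Theorem \ref{Thm:HarmPropFracFourT}, which is stated as an equality on $\maclH_0'(\rr d)$, indeed descends to $M^p_{(\omega)}(\rr d)$; this is immediate since $M^p_{(\omega)}(\rr d)\hookrightarrow \maclH_{\flat_1}'(\rr d)\hookrightarrow \maclH_0'(\rr d)$ and both sides of the identity are continuous on $M^p_{(\omega)}(\rr d)$ by the preceding paragraph.
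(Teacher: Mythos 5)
Your proposal is correct and is essentially identical to the paper's own argument: the paper simply states that the proposition follows by combining Theorem \ref{Thm:HarmPropFracFourT} with Proposition \ref{Prop:FracFourMod}, which is exactly the reduction you carry out (including the observation that the scalar factor $e^{-i\frac \pi 4(\sumvec (\varrho )+c)}$ only rescales the quasi-norm by $e^{\frac \pi 4 \IM (c)}$). Your added remark about the identity on $\maclH _0'(\rr d)$ descending to $M^p_{(\omega )}(\rr d)$ is a reasonable bit of extra care that the paper leaves implicit.
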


\par

We also have the following extensions of
\cite[Proposition 5.1]{CorNic}
and the previous propositions.
Here the involved weights are allowed to belong to
the general class
$\mascP _{\! A}(\rr {2d})$, and are linked as
\begin{equation}\label{Eq:FracFourWeightsRel}
\begin{aligned}
\omega _\varrho (x,\xi )
&=
\omega (A_{d,\varrho}(x,\xi ))
\end{aligned}
\end{equation}
(see Subsections \ref{subsec1.4}, \ref{subsec1.7}
and Remark \ref{Rem:GeneralModSpaces})

%
%


\par

\begin{thm}\label{Thm:FrFTModSpec1}
Let $\varrho \in \mathbf R\setminus 2\mathbf Z$,
$c\in \mathbf C$,
$\omega ,\omega _\varrho \in
\mascP _{\! A}(\rr {2d})$
and $p,q\in (0,\infty]$ be such that $q\le p$ and
\eqref{Eq:FracFourWeightsRel} hold. Then
$\mascF _{\! \varrho}$ and
$e^{-i\frac \pi 4H_{x,\varrho ,c}}$
on $\maclH _{\flat _1}'(\rr d)$
restrict to continuous mappings from
$M^{p,q}_{(\omega )}(\rr d)$
to $M^{q,p}_{(\omega _\varrho )}(\rr d)$, and from
$W^{q,p}_{(\omega )}(\rr d)$
to $W^{p,q}_{(\omega _\varrho )}(\rr d)$, and
\begin{alignat}{2}
\nm {\mascF _{\! \varrho} f}
{M^{q,p}_{(\omega _\varrho )}}
&=
e^{-\frac \pi 4 \cdot \IM(c)}
\nm {e^{-i\frac \pi 4 H_{x,\varrho ,c}}f}
{M^{q,p}_{(\omega _\varrho )}}
\notag
\\
&\lesssim
|\sin (\textstyle{\frac {\pi \varrho}2})|^{d
(\frac 1p-\frac 1q)}
\nm f{M^{p,q}_{(\omega )}}, &
\qquad f&\in M^{p,q}_{(\omega )}(\rr d)
\label{Eq:FrFTModSpec1A}
\intertext{and}
\nm {\mascF _{\! \varrho} f}{W^{p,q}_{(\omega _\varrho )}}
&=
e^{-\frac \pi 4 \cdot \IM (c)}
\nm {e^{-i\frac \pi 4H_{x,\varrho ,c}}f}
{W^{p,q}_{(\omega _t)}}
\notag
\\
&\lesssim
|\sin (\textstyle{\frac {\pi \varrho }2})|
^{d(\frac 1p-\frac 1q)}
\nm f{W^{q,p}_{(\omega )}}, &
\qquad f&\in W^{q,p}_{(\omega )}(\rr d) .
\label{Eq:FrFTModSpec1B}
\end{alignat}
\end{thm}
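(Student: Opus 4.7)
The plan is to establish the $\mascF_{\!\varrho}$ estimates directly; the harmonic-oscillator propagator statements then follow at once from Theorem \ref{Thm:HarmPropFracFourT}, which yields $\mascF_{\!\varrho}=e^{i\frac\pi 4(\sumvec(\varrho)+c)}e^{-i\frac\pi 4 H_{x,\varrho,c}}$. Since $\varrho\in\mathbf R$ forces $\sumvec(\varrho)\in\mathbf R$, the scalar factor has modulus $e^{-\frac\pi 4\IM c}$, producing the equality factors in \eqref{Eq:FrFTModSpec1A}--\eqref{Eq:FrFTModSpec1B}. Combining the STFT identity \eqref{Eq:STFTFracFour} (whose phase factor has modulus one) with the weight hypothesis $\omega_\varrho=\omega\circ A_{d,\varrho}$ from \eqref{Eq:FracFourWeightsRel}, and writing $G:=|V_\phi f|\cdot\omega\ge 0$, both stated estimates reduce to the mixed-norm rotation inequalities
\begin{equation*}
\|G\circ A_{d,\varrho}\|_{L^{q,p}}\le C_\theta\|G\|_{L^{p,q}}\quad\text{and}\quad \|G\circ A_{d,\varrho}\|_{L^{p,q}_*}\le C_\theta\|G\|_{L^{q,p}_*},
\end{equation*}
where $C_\theta=|\sin(\pi\varrho/2)|^{d(1/p-1/q)}$, for arbitrary measurable $G\ge 0$ and all $q\le p$ in $(0,\infty]$.

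For the first inequality, set $\theta=\pi\varrho/2$, which is nonzero modulo $\pi$ since $\varrho\notin 2\mathbf Z$. Fixing $\xi$ and substituting $\eta_j:=-\sin\theta\,x_j+\cos\theta\,\xi_j$ componentwise produces the Jacobian $|\sin\theta|^{-d}$ and converts $A_{d,\varrho}(x,\xi)$ to $(y,\eta)$ with $y_j=(\xi_j-\cos\theta\,\eta_j)/\sin\theta$. A short computation gives
\begin{equation*}
\int|G(A_{d,\varrho}(x,\xi))|^q\,dx=|\sin\theta|^{-d}\int F(\xi,\eta)\,d\eta,\qquad F(\xi,\eta):=\big|G\big(\textstyle\frac{\xi-\cos\theta\,\eta}{\sin\theta},\eta\big)\big|^q.
\end{equation*}
I then apply Minkowski's integral inequality to the outer $L^{p/q}_\xi$-integral, which is valid since $p/q\in[1,\infty]$, obtaining $\big(\int(\int F\,d\eta)^{p/q}\,d\xi\big)^{q/p}\le\int\big(\int F^{p/q}\,d\xi\big)^{q/p}\,d\eta$. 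Substituting $\xi\mapsto y=(\xi-\cos\theta\,\eta)/\sin\theta$ in the inner integral yields $\int F^{p/q}\,d\xi=|\sin\theta|^d\int|G(y,\eta)|^p\,dy$, and collecting the $|\sin\theta|$-factors followed by taking the $q$-th root delivers the first inequality with the claimed constant. The $W$-space inequality follows from the same pattern after interchanging the roles of $x$ and $\xi$, i.e.\ substituting $y_j=\cos\theta\,x_j+\sin\theta\,\xi_j$ in the inner $L^q_\xi$-integral.

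The main delicate point is to obtain an argument valid uniformly in the full quasi-Banach range $p,q\in(0,\infty]$: the standard mixed-norm Minkowski inequality fails when $p$ or $q$ lies below $1$, which would ordinarily force a separate treatment by complex interpolation of quasi-Banach spaces or by a Gabor-frame discretization. The observation that avoids this complication is that Minkowski enters the proof only through the exponent $p/q$, which by the assumption $q\le p$ always belongs to $[1,\infty]$; the argument therefore remains inside a genuine Banach setting at the step where it matters. Once the rotation estimate has been established for the unsigned function $G=|V_\phi f|\cdot\omega$ (which is meaningful for any $f\in\maclH_{\flat_1}'(\rr d)$), the continuity of $\mascF_{\!\varrho}$ and of $e^{-i\frac\pi 4 H_{x,\varrho,c}}$ between the stated modulation spaces is a direct consequence.
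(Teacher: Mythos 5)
Your proposal is correct and follows essentially the same route as the paper: reduce via the STFT rotation identity \eqref{Eq:STFTFracFour} and the weight relation \eqref{Eq:FracFourWeightsRel} to a mixed-norm estimate for the rotation $A_{d,\varrho}$, and exploit that the only Minkowski-type step involves the exponent $p/q\in[1,\infty]$, so the quasi-Banach range causes no trouble (this is exactly the paper's Lemma \ref{Lemma:MinkowskiSpec} with $p_0=p/q$). The only cosmetic difference is that you prove the key rotation inequality by invoking the integral form of Minkowski's inequality directly on nonnegative measurable functions, whereas the paper derives the same bound by duality against $h\in L^{p_0'}$ and a density argument; the two are equivalent, and your version handles $p=\infty$ without a separate case.
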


\par

\begin{thm}\label{Thm:FrFTModSpec2}
Let $\varrho \in \mathbf R\setminus (2\mathbf Z +1)$,
$\omega ,\omega _\varrho \in \mascP _{\! A}(\rr {2d})$
and $p,q\in (0,\infty]$ be such that $q\le p$ and
\eqref{Eq:FracFourWeightsRel} hold.
Then $\mascF _{\! \varrho} $ and
$e^{-i\frac \pi 4H_{x,\varrho ,c}}$
on $\maclH _{\flat _1}'(\rr d)$
restrict to continuous mappings from
$M^{p,q}_{(\omega )}(\rr d)$
to $W^{p,q}_{(\omega _\varrho )}(\rr d)$, and from
$W^{q,p}_{(\omega )}(\rr d)$
to $M^{q,p}_{(\omega _\varrho )}(\rr d)$, and
\begin{alignat}{2}
\nm {\mascF _{\! \varrho} f}{W^{p,q}_{(\omega _\varrho )}}
&=
e^{-\frac \pi 4 \cdot \IM(c)}
\nm {e^{-i\frac \pi 4 H_{x,\varrho ,c}}f}
{W^{p,q}_{(\omega _\varrho )}}
\notag
\\
&\lesssim
|\cos (\textstyle{\frac {\pi \varrho }2})|
^{d(\frac 1p-\frac 1q)}
\nm f{M^{p,q}_{(\omega )}}, &
\qquad f&\in M^{p,q}_{(\omega )}(\rr d)
\label{Eq:FrFTModSpec2A}
\intertext{and}
\nm {\mascF _{\! \varrho} f}{M^{q,p}_{(\omega _\varrho )}}
&=
e^{-\frac \pi 4 \cdot \IM (c)}
\nm {e^{-i\frac \pi 4H_{x,\varrho ,c}}f}
{M^{q,p}_{(\omega _t)}}
\notag
\\
&\lesssim
|\cos (\textstyle{\frac {\pi \varrho }2})|
^{d(\frac 1p-\frac 1q)}
\nm f{W^{q,p}_{(\omega )}}, &
\qquad f&\in W^{q,p}_{(\omega )}(\rr d).
\label{Eq:FrFTModSpec2B}
\end{alignat}
\end{thm}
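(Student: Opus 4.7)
The strategy is to reduce Theorem \ref{Thm:FrFTModSpec2} to Theorem \ref{Thm:FrFTModSpec1} by factoring
\begin{equation*}
\mascF _{\! \varrho} = \mascF _{\! 1} \circ \mascF _{\! \varrho -1},
\end{equation*}
which follows from \eqref{Eq:BargmannActFracFourTr} and the group property of the fractional Fourier transform recorded earlier. Since $\varrho \in \mathbf R \setminus (2\mathbf Z + 1)$ is equivalent to $\varrho - 1 \in \mathbf R \setminus 2\mathbf Z$, Theorem \ref{Thm:FrFTModSpec1} applies directly to $\mascF _{\! \varrho - 1}$, and it remains to compose with $\mascF _{\! 1}$, the ordinary Fourier transform, which by (the quasi-Banach, $\mascP _{\! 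A}$-weighted extension of) Proposition \ref{p1.4B}(4) swaps $M^{p,q}$ with $W^{q,p}$ and transforms the weight by $\omega (x,\xi) \mapsto \omega(A_{d,1}(x,\xi)) = \omega(\xi,-x)$.

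Two compatibilities must then be verified. The trigonometric identity $\sin(\tfrac{\pi(\varrho - 1)}{2}) = -\cos(\tfrac{\pi \varrho}{2})$ converts the $|\sin|$-factor in \eqref{Eq:FrFTModSpec1A} and \eqref{Eq:FrFTModSpec1B} into the required $|\cos|$-factor. For the weights, rotations compose (that is, $A_{d,\varrho _1 + \varrho _2} = A_{d,\varrho _1}\circ A_{d,\varrho _2}$), which gives
\begin{equation*}
\omega _{\varrho - 1}(A_{d,1}(x,\xi)) = \omega(A_{d,\varrho - 1}(A_{d,1}(x,\xi))) = \omega(A_{d,\varrho}(x,\xi)) = \omega _{\varrho}(x,\xi),
\end{equation*}
so the weight emerging after the Fourier step is exactly $\omega _\varrho$. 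Tracing the composition then yields the continuous mapping
\begin{equation*}
M^{p,q}_{(\omega)}(\rr d) \xrightarrow{\mascF _{\! \varrho - 1}} M^{q,p}_{(\omega _{\varrho -1})}(\rr d) \xrightarrow{\mascF _{\! 1}} W^{p,q}_{(\omega _{\varrho})}(\rr d),
\end{equation*}
and analogously $W^{q,p}_{(\omega)}(\rr d) \to W^{p,q}_{(\omega _{\varrho -1})}(\rr d) \to M^{q,p}_{(\omega _{\varrho})}(\rr d)$, both with the required bound $|\cos(\pi \varrho/2)|^{d(1/p - 1/q)}$ inherited from Theorem \ref{Thm:FrFTModSpec1}.

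The norm equalities between $\mascF _{\! \varrho}$ and $e^{-i\pi H_{x,\varrho ,c}/4}$ in \eqref{Eq:FrFTModSpec2A} and \eqref{Eq:FrFTModSpec2B} then follow directly from the identity $\mascF _{\! \varrho} = e^{i\pi(\sumvec(\varrho)+c)/4}e^{-i\pi H_{x,\varrho ,c}/4}$ (Theorem \ref{Thm:HarmPropFracFourT}), since $\varrho \in \mathbf R$ forces $\sumvec (\varrho) \in \mathbf R$, leaving only the modulus $|e^{i\pi c/4}| = e^{-\pi \IM(c)/4}$. The main technical point I anticipate is justifying the extension of Proposition \ref{p1.4B}(4) to the full quasi-Banach range and to arbitrary weights in $\mascP _{\! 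A}(\rr {2d})$; this, however, reduces to the $\varrho = 1$ specialization of \eqref{Eq:STFTFracFour}, which gives $|V_\phi (\mascF f)(x,\xi)| = |V_\phi f(\xi, -x)|$ for the standard Gaussian window, after which the norm identity follows by a change of variables in the mixed-norm integral.
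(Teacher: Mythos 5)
Your proposal is correct and follows essentially the same route as the paper: the paper's proof also reduces Theorem \ref{Thm:FrFTModSpec2} to Theorem \ref{Thm:FrFTModSpec1} by composing with the ordinary Fourier transform (using its $M^{p,q}_{(\omega _1)}\to W^{q,p}_{(\omega _2)}$ mapping property) together with the identity $|\cos (\frac {\pi (\varrho +1)}2)|=|\sin (\frac {\pi \varrho }2)|$, calling the second theorem ``the Fourier version'' of the first. Your write-up merely makes explicit the factorization $\mascF _{\! \varrho}=\mascF _{\! 1}\circ \mascF _{\! \varrho -1}$ and the bookkeeping of the weights, which the paper leaves implicit.
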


\par

For the proofs of Theorems \ref{Thm:FrFTModSpec1} and
\ref{Thm:FrFTModSpec2} we need the following version of Minkowski's
inequality.

\par

%
%

\begin{lemma}\label{Lemma:MinkowskiSpec}
Let $\varrho \in \mathbf R$, $p,q\in (0,\infty ]$
be such that $q\le p$,
$A_{d,\varrho }$ be as in Subsection \ref{subsec1.7}
and let $T_\varrho$ from $\Sigma _1(\rr {2d})$
to $\Sigma _1(\rr {2d})$ be given by
$$
(T_\varrho f)(x,\xi ) = f(A_{d,\varrho}(x,\xi )),
\qquad f\in L^q_{loc}(\rr {2d}).
$$
Then the following is true:
\begin{enumerate}
\item if $\varrho \neq 2\mathbf Z$, then
$T_\varrho$ extends uniquely to a
continuous map from
$L^{p,q}(\rr {2d})$ to $L^{q,p}(\rr {2d})$
and from $L^{q,p}_*(\rr {2d})$
$L^{p,q}_*(\rr {2d})$, and
\begin{alignat}{2}
\nm {T_\varrho f}{L^{q,p}}
&\le
|\sin {\textstyle{(\frac {\pi \varrho }2)}}|
^{d(\frac 1p-\frac 1q)} \nm f{L^{p,q}}, &
\qquad f&\in L^q_{loc}(\rr {2d}) \text ;
\label{Eq:MinkowskiSpec1A}
\end{alignat}

\vrum

\item if $\varrho \neq 2\mathbf Z+1$, then
$T_\varrho$ extends uniquely to a
continuous map from $L^{p,q}(\rr {2d})$ to $L^{p,q}_*(\rr {2d})$
and from $L^{q,p}_*(\rr {2d})$ to $L^{q,p}(\rr {2d})$, and
\begin{alignat}{2}
\nm {T_\varrho f}{L^{p,q}_*}
&\le
|\cos {\textstyle{(\frac {\pi \varrho }2)}}|
^{d(\frac 1p-\frac 1q)} \nm f{L^{p,q}}, &
\qquad f&\in L^q_{loc}(\rr {2d}) \text .
\label{Eq:MinkowskiSpec2A}
\end{alignat}
The same holds true with $L^{p,q}_*$ and
$L^{q,p}_*$ in place of
$L^{q,p}$ and $L^{p,q}$, respectively,
at each occurrence.
\end{enumerate}
\end{lemma}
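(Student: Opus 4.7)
The strategy is to exploit the explicit structure of $A_{d,\varrho}$ as a product of planar rotations of angle $\theta=\pi\varrho/2$ in each coordinate plane $(x_j,\xi_j)$, combined with two changes of variable and the generalized Minkowski integral inequality. Write $A_{d,\varrho}(x,\xi)=(u,v)$ with $u=\cos\theta\cdot x+\sin\theta\cdot\xi$ and $v=-\sin\theta\cdot x+\cos\theta\cdot\xi$. For \eqref{Eq:MinkowskiSpec1A} the inverse will replace $x$ by $v$, which is available precisely when $\sin\theta\neq 0$, i.e.\ $\varrho\notin 2\mathbf{Z}$; for \eqref{Eq:MinkowskiSpec2A} the inverse will replace $\xi$ by $v$, which requires $\cos\theta\neq 0$, i.e.\ $\varrho\notin 2\mathbf{Z}+1$.

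For (1), fix $\xi\in\rr d$ and change variable $x\mapsto v=-\sin\theta\cdot x+\cos\theta\cdot\xi$ in the inner $L^q_x$-integral (Jacobian $|\sin\theta|^d$), converting $u$ into $\xi/\sin\theta-\cot\theta\cdot v$. With $w(v,\xi):=f(\xi/\sin\theta-\cot\theta\cdot v,\,v)$ this gives $\|T_\varrho f(\cdot,\xi)\|_{L^q_x}=|\sin\theta|^{-d/q}\,\|w(\cdot,\xi)\|_{L^q_v}$. Next, apply the generalized Minkowski inequality to swap the order of integration,
\begin{equation*}
\bigl\|\,\|w(\cdot,\xi)\|_{L^q_v}\,\bigr\|_{L^p_\xi}\le \bigl\|\,\|w(v,\cdot)\|_{L^p_\xi}\,\bigr\|_{L^q_v}.
\end{equation*}
Finally, for each fixed $v$, change variable $\xi\mapsto x'=\xi/\sin\theta-\cot\theta\cdot v$ in the inner $L^p_\xi$-integral (Jacobian $|\sin\theta|^{-d}$), which gives $\|w(v,\cdot)\|_{L^p_\xi}=|\sin\theta|^{d/p}\,\|f(\cdot,v)\|_{L^p_x}$. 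The three factors combine into the total constant $|\sin\theta|^{-d/q+d/p}=|\sin\theta|^{d(1/p-1/q)}$, proving \eqref{Eq:MinkowskiSpec1A}.

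The proof of \eqref{Eq:MinkowskiSpec2A} is entirely parallel, with $\sin\theta$ replaced by $\cos\theta$ and the roles of $x$ and $\xi$ exchanged in the first step: fix $x$ and substitute $\xi\mapsto v=-\sin\theta\cdot x+\cos\theta\cdot\xi$ in the inner $L^q_\xi$-integral (Jacobian $|\cos\theta|^d$), apply Minkowski to swap the outer $L^p_x$ past the inner $L^q_v$, and finally substitute $x\mapsto x'=x/\cos\theta+\tan\theta\cdot v$ in the resulting $L^p_x$-integral (Jacobian $|\cos\theta|^{-d}$). The unstated estimates obtained by replacing the unstarred spaces $L^{q,p}$, $L^{p,q}$ with the starred ones $L^{p,q}_*$, $L^{q,p}_*$ follow from exactly the same argument with the roles of $x$ and $\xi$ exchanged throughout. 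One first justifies the computations on a dense subspace, e.g.\ $\Sigma_1(\rr{2d})$ when $p,q<\infty$, and then extends by continuity; the endpoint cases $p=\infty$ or $q=\infty$ are handled directly with the essential supremum in place of integration.

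The only step requiring care is the generalized Minkowski inequality, which must be invoked in the full quasi-Banach range $0<q\le p\le\infty$. This is routine: one rewrites $\|g\|_{L^p_x L^q_y}^q=\|\int|g(\cdot,y)|^q\,dy\|_{L^{p/q}_x}$ and applies the classical Minkowski inequality at the Banach exponent $p/q\ge 1$, so that no local-convexity of the outer level is needed. The remaining work is bookkeeping of Jacobians and of the explicit inverse formulas for $A_{d,\varrho}$, which presents no genuine obstacle.
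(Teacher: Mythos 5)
Your proof is correct, and it reaches the same constants by a route that is organized differently from the paper's. The paper fixes $p_0=p/q\ge 1$, tests $g_\varrho(\xi)=\int|f(A_{d,\varrho}(x,\xi))|^q\,dx$ against $h\in L^{p_0'}$, performs the single measure-preserving substitution $(y,\eta)=A_{d,\varrho}(x,\xi)$ on $\rr{2d}$, and then applies H{\"o}lder's inequality in the first variable together with the scaling $\nm{h(\sin\theta\,\cdot+\cos\theta\,\eta)}{L^{p_0'}}=|\sin\theta|^{-d/p_0'}\nm h{L^{p_0'}}$; the case $p=\infty$ is treated separately by a direct pointwise bound. You instead straighten the coordinates with two slice-wise affine substitutions (picking up $|\sin\theta|^{-d/q}$ and $|\sin\theta|^{d/p}$) and interchange the mixed norms by the generalized Minkowski inequality, extended to $0<q\le p$ via the $p/q$-power reduction. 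These are two realizations of the same mechanism -- the paper's duality-plus-H{\"o}lder step is essentially an inlined proof of the Minkowski interchange you invoke -- but your version is more symmetric (it handles the starred estimates and part (2) by literally exchanging the roles of $x$ and $\xi$, whereas the paper proves only (1) and delegates the rest), and it absorbs the endpoint $p=\infty$ into the general scheme rather than treating it as a separate case. The only point to be careful about, which you correctly flag, is that the interchange must be justified in the quasi-Banach range $q<1$; your reduction to the classical Minkowski inequality at exponent $p/q\ge 1$ does exactly that.
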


\par

\begin{proof}
We only prove (1). The assertion (2) follows by similar 
arguments and is left for the reader.

\par

First suppose that $p<\infty$. Then $q<\infty$.
Let $\theta =\frac {\pi \varrho }2$,
$f\in \mascS (\rr d)$, $p_0=p/q\ge 1$,
$h\in \mascS (\rr d)$ be such
that $\nm h{L^{p_0'}}\le 1$,
$$
f_\varrho (x,\xi ) = f(A_{d,\varrho}(x,\xi ))
\quad \text{and}\quad
g_\varrho (\xi ) = \int _{\rr d}|f_\varrho (x,\xi )|^q\, dx.
$$
Since $f_0=f$ and
$$
(y,\eta ) = A_{d,\varrho}(x,\xi )
\quad \Leftrightarrow \quad
(x,\xi ) = A_{d,-\varrho}(y,\eta ),
$$
we get
\begin{align*}
|(g_\varrho ,h)_{L^2}|
&=
\left |
\iint _{\rr {2d}} |f_0(A_{d,\varrho}
(x,\xi ))|^qh(\xi )\, dxd\xi
\right |
\\[1ex]
&=
\left |
\iint _{\rr {2d}} |f_0(y,\eta )|^qh((\sin \theta)y+(\cos \theta)\eta )\, dxd\xi
\right |
\\[1ex]
&\le
\int _{\rr {d}} \nm {|f_0(\cdo ,\eta )|^q}{L^{p_0}}
\nm {h((\sin \theta)\cdo +(\cos \theta)\eta )}{L^{p_0'}}\, d\xi
\\[1ex]
&=
|\sin \theta |^{-\frac d{p_0'}}\nm {h}{L^{p_0'}}
\int _{\rr {d}} \nm {|f_0(\cdo ,\eta )|^q}{L^{p_0}}
\, d\xi
\le
|\sin \theta|^{-\frac d{p_0'}} \nm {f_0}{L^{p,q}}^q.
\end{align*}
By taking the supremum over all possible $h$ with $\nm h{L^{p_0'}}\le 1$
we obtain
$$
\nm {g_\varrho}{L^{p_0}}
\le
|\sin \theta|^{-\frac d{p_0'}} \nm {f_0}{L^{p,q}}^q
=
|\sin \theta|^{-\frac d{p_0'}} \nm {f}{L^{p,q}}^q,
$$
which is the same as \eqref{Eq:MinkowskiSpec1A}
when $f\in \mascS (\rr {2d})$.
Since $\mascS (\rr {2d})$ is dense in
$L^{p,q}(\rr {2d})$ when $p,q<\infty$,
\eqref{Eq:MinkowskiSpec1A} follows when $p<\infty$.

\par

Next suppose that $p=\infty$. The result is
obviously true when $q=\infty$. Therefore
suppose that $q<\infty$, and let
$f\in L^q_{loc}(\rr {2d})$ and
$$
g(\xi ) = \nm {f(\cdo ,\xi )}{L^p}.
$$
Then
$$
\int _{\rr d}|f_\varrho (x,\xi )|^q\, dx
\le
\int _{\rr d}|g((-\sin \theta )x+(\cos \theta )\xi )|^q\, dx
=|\sin \theta |^{-d}\nm g{L^{\infty ,q}}^q
$$
If we take the supremum over $\xi \in \rr d$, then
we obtain \eqref{Eq:MinkowskiSpec1A} for $p=\infty$,
and we have proved \eqref{Eq:MinkowskiSpec1A} for
any $p\in (0,\infty ]$.
%
%
This gives (1), and the result follows.
\end{proof}

\par

\begin{proof}[Proof of Theorems \ref{Thm:FrFTModSpec1} and
\ref{Thm:FrFTModSpec2}]
By
$$
\mascF (M^{p,q}_{(\omega _1)}(\rr d)) = W^{q,p}_{(\omega _2)}(\rr d),
\quad \text{when}\quad
\omega _1(x,\xi ) = \omega _2(\xi ,-x)
$$
and that
$|\cos (\frac {\pi (\varrho +1)}2)|
=|\sin (\frac {\pi \varrho }2)|$,
it follows that
Theorem \ref{Thm:FrFTModSpec2} is the Fourier version of
Theorem \ref{Thm:FrFTModSpec1}. Hence it suffices to prove
Theorem \ref{Thm:FrFTModSpec1}.
By \eqref{Eq:FracFourTransHarmProp} it also follows
that it suffices to prove the norm estimates for
$\mascF _{\! \varrho} f$ in
\eqref{Eq:FrFTModSpec1A} and \eqref{Eq:FrFTModSpec1B}.

\par

%

Let $\phi (x)=\pi ^{-\frac d4}e^{-\frac 12|x|^2}$. Then
\eqref{Eq:STFTFracFour} gives
\begin{align*}
F_\varrho (x,\xi )
&=
|(V_\phi (\mascF _{\! \varrho} f))(x,\xi )
\omega _\varrho (x,\xi )|
=
|(V_\phi f)(A_{d,\varrho}(x,\xi ))
\omega (A_{d,\varrho}(x,\xi ))|.
\intertext{We also have}
F_0(x,\xi )
&=
|(V_\phi f)(x,\xi )\omega (x,\xi )| 
\quad \text{and}\quad
F_\varrho (x,\xi )
=
F_0(A_{d,\varrho}(x,\xi )).
\end{align*}
By Lemma \ref{Lemma:MinkowskiSpec} and the identities
above we obtain
$$
\nm {\mascF _{\! \varrho} f}{M^{q,p}_{(\omega _\varrho )}}
\asymp
\nm {F_\varrho}{L^{q,p}}
\le
|\sin {\textstyle{(\frac {\pi \varrho }2)}}|
^{d(\frac 1p-\frac 1q)} \nm {F_0}{L^{p,q}}
\asymp
|\sin {\textstyle{(\frac {\pi \varrho }2)}}|
^{d(\frac 1p-\frac 1q)}
\nm {f}{M^{p,q}_{(\omega )}},
$$
and \eqref{Eq:FrFTModSpec1A} follows. In similar ways one
obtains \eqref{Eq:FrFTModSpec1B}. The details are left
for the reader, and the result follows.
\end{proof}

\par

\begin{rem}
By choosing $\omega =1$ and $p=q'\ge 2$, Theorem
\ref{Thm:FrFTModSpec2} agrees with
\cite[Proposition 5.1]{CorNic} by Cordero and Nicola.
\end{rem}

\par

It is evident that Theorems \ref{Thm:FrFTModSpec1} and
\ref{Thm:FrFTModSpec2} implies the following weighted
version of Proposition \ref{Prop:ConseqMainThms} in the introduction.

\par

\renewcommand{\rubrik}{Proposition \ref{Prop:ConseqMainThms}$'$}

\par

\begin{tom}
Let $\varrho \in \mathbf R$, $c\in \mathbf C$,
$\omega ,\omega _\varrho 
\in \mascP _{\! A}(\rr {2d})$ and $p,q\in (0,\infty]$
be such
that $q\le p$ and \eqref{Eq:FracFourWeightsRel} holds.
Then the following is true:
\begin{enumerate}
\item the map
\begin{alignat*}{2}
\mascF _{\! \varrho} =e^{-i\frac \pi 4 H_{x,\varrho ,c}}
\, &:\,
M^{p,q}_{(\omega )}(\rr d)+W^{q,p}_{(\omega )}(\rr d)
& &\to
M^{q,p}_{(\omega _\varrho )}(\rr d)
+
W^{p,q}_{(\omega _\varrho )}(\rr d)
\end{alignat*}
is continuous;

\vrum

\item if in addition $\varrho \notin \mathbf Z$,
then the map
\begin{alignat*}{2}
\mascF _{\! \varrho} =e^{-i\pi H_{x,\varrho ,c}}
\, &:\,
M^{p,q}_{(\omega )}(\rr d)
+
W^{q,p}_{(\omega )}(\rr d) &
&\to
M^{q,p}_{(\omega _\varrho )}(\rr d){\textstyle \bigcap}
W^{p,q}_{(\omega _\varrho )}(\rr d)
\end{alignat*}
is continuous.
\end{enumerate}
\end{tom}

\par

By choosing $p=q$ in (1) in previous proposition, we get
Propositions \ref{Prop:FracFourMod} and 
\ref{Prop:HarmPropMod}.

\medspace

\subsection{Extensions to multiple ordered fractional
Fourier transforms}

\par



By using similar arguments as in the 
proofs of Theorems
\ref{Thm:FrFTModSpec1} and \ref{Thm:FrFTModSpec2},
it follows that
that the following extensions hold true.
Again we refer to Subsections \ref{subsec1.4}
and \ref{subsec1.7} for notations.
The details are left for the reader.

\par

\renewcommand{\rubrik}{Theorem \ref{Thm:FrFTModSpec1}$'$}

\par

\begin{tom}
Let $\varrho \in \rr d\setminus 2\zz d$,
$c\in \mathbf C$,
$\omega ,\omega _\varrho \in \mascP _{\! A}(\rr {2d})$
and $p,q\in (0,\infty]$ be such that $q\le p$ and
\eqref{Eq:FracFourWeightsRel} hold. Then
$\mascF _{\! \varrho}$ and $e^{-i\frac \pi 4H_{x,\varrho ,c}}$
on $\maclH _{\flat _1}'(\rr d)$
restrict to continuous mappings from
$M^{p,q}_{(\omega )}(\rr d)$
to $M^{q,p}_{(\omega _\varrho )}(\rr d)$, and from
$W^{q,p}_{(\omega )}(\rr d)$
to $W^{p,q}_{(\omega _t)}(\rr \varrho )$, and
\begin{alignat}{2}
\nm {\mascF _{\! \varrho} f}{M^{q,p}_{(\omega _\varrho )}}
&=
e^{-\frac \pi 4 \cdot \IM (c)}
\nm {e^{-i\frac \pi 4H_{x,\varrho ,c}}f}
{M^{q,p}_{(\omega _\varrho )}}
\notag
\\
&\lesssim
\big ({\textstyle{\prod _{j=1}^d}}
|\sin (\textstyle{\frac {\pi \varrho _j}2})|
^{\frac 1p-\frac 1q}\big )
\nm f{M^{p,q}_{(\omega )}}, &
\qquad f&\in M^{p,q}_{(\omega )}(\rr d)
\tag*{(\ref{Eq:FrFTModSpec1A})$'$}
\intertext{and}
\nm {\mascF _{\! \varrho} f}{W^{p,q}_{(\omega _\varrho )}}
&=
e^{-\frac \pi 4 \cdot \IM (c)}
\nm {e^{-i\frac \pi 4H_{x,\varrho ,c}}f}
{W^{p,q}_{(\omega _\varrho )}}
\notag
\\
&\lesssim
\big ({\textstyle{\prod _{j=1}^d}}
|\sin (\textstyle{\frac {\pi \varrho _j}2})|
^{\frac 1p-\frac 1q}\big )
\nm f{W^{q,p}_{(\omega )}}, &
\qquad f&\in W^{q,p}_{(\omega )}(\rr d) .
\tag*{(\ref{Eq:FrFTModSpec1B})$'$}
\end{alignat}
\end{tom}

\par

\renewcommand{\rubrik}{Theorem \ref{Thm:FrFTModSpec2}$'$}

\par

\begin{tom}
Let
$\varrho \in \rr d\setminus (2\mathbf Z d+1)^d$,
$\omega ,\omega _\varrho \in \mascP _{\! A}(\rr {2d})$
and $p,q\in (0,\infty]$ be such that $q\le p$ and
\eqref{Eq:FracFourWeightsRel} hold.
Then $\mascF _{\! \varrho} $ and
$e^{-i\frac \pi 4H_{x,\varrho ,c}}$
on $\maclH _{\flat _1}'(\rr d)$
restrict to continuous mappings from
$M^{p,q}_{(\omega )}(\rr d)$
to $W^{p,q}_{(\omega _\varrho )}(\rr d)$, and from
$W^{q,p}_{(\omega )}(\rr d)$
to $M^{q,p}_{(\omega _\varrho )}(\rr d)$, and
\begin{alignat}{2}
\nm {\mascF _{\! \varrho} f}{W^{p,q}_{(\omega _\varrho )}}
&=
e^{-\frac \pi 4 \cdot \IM (c)}
\nm {e^{-i\frac \pi 4H_{x,\varrho ,c}}f}
{W^{p,q}_{(\omega _\varrho )}}
\notag
\\
&\lesssim
\big ({\textstyle{\prod _{j=1}^d}}
|\cos (\textstyle{\frac {\pi \varrho _j}2})|
^{\frac 1p-\frac 1q}\big )
\nm f{M^{p,q}_{(\omega )}}, &
\qquad f&\in M^{p,q}_{(\omega )}(\rr d)
\tag*{(\ref{Eq:FrFTModSpec2A})$'$}
\intertext{and}
\nm {\mascF _{\! \varrho} f}{M^{q,p}_{(\omega _\varrho )}}
&=
e^{-\frac \pi 4 \cdot \IM (c)}
\nm {e^{-i\frac \pi 4H_{x,\varrho ,c}}f}
{M^{q,p}_{(\omega _\varrho )}}
\notag
\\
&\lesssim
\big ({\textstyle{\prod _{j=1}^d}}
|\cos (\textstyle{\frac {\pi \varrho _j}2})|
^{\frac 1p-\frac 1q}\big )
\nm f{W^{q,p}_{(\omega )}}, &
\qquad f&\in W^{q,p}_{(\omega )}(\rr d).
\tag*{(\ref{Eq:FrFTModSpec2B})$'$}
\end{alignat}
\end{tom}

\par

\section{Applications to
Strichartz estimates, and some further continuity
properties for certain partial differential
equations}\label{sec4}

\par

In this section we apply results from
the previous sections to extend certain
Strichartz estimates in \cite{CorNic}, with
initial data in suitable Wiener amalgam
spaces. Thereafter we deduce further
continuity properties for a family of
equations involving certain
Schr{\"o}dinger equations and heat
equations.

\par

\subsection{Strichartz estimates for certain
Schr{\"o}dinger equations}

\par

We shall deduce Strichartz estimates in the
framework of the operator $E$, $S_1$ and $S_2$
in Subsection \ref{subsec1.8} (see
\eqref{Eq:DefEROp}$'$--\eqref{Eq:DefS2ROp}$'$).
If $T>0$, then it follows by straight-forward estimates
that $S_1$ and $S_2$ are continuous
from $C([0,T];M^1(\rr d))$ to
$L^\infty ([0,T];M^1(\rr d))$.


\par

By Proposition
\ref{Prop:HarmPropMod} it follows that
$E$, $S_1$ and $S_2$ are uniquely defined and
continuous. In the following result we
extend the operator $S_j$ to act between
spaces of the form
$L^r([0,T];W^{p,q}(\rr d))$.

\par

\begin{thm}\label{Thm:StrichEstMod1}
Let $p,p_0,q\in [1,\infty ]$ and
$r_0\in (0,\infty ]$ be such that
\begin{equation}\label{Eq:StrichEstMod1LebExp}
0\le d\left (\frac 1q-\frac 1p\right )
<1,
\quad
d\left (\frac 1q-\frac 1p\right )
\le 1+\frac 1{r_0}-\frac 1{p_0},
\end{equation}
with strict inequalities when
$q<p$ and $p_0=1$, or when
$q<p$ and $r_0=\infty$.
Also let $S_1$ and $S_2$
from $C([0,T];M^1(\rr d))$ to
$L^\infty ([0,T];M^1(\rr d))$ be
given by \eqref{Eq:DefS1ROp}$'$ and
\eqref{Eq:DefS2ROp}$'$, and let
$\omega \in \mascP _{\! A ,r}(\rr {2d})$.
Then $S_1$ and $S_2$ is uniquely extendable to a
continuous mappings
\begin{alignat}{2}
S_j &:\, & L^{p_0}([0,T];M^{p,q}_{(\omega )}(\rr d))
&\to
L^{r_0}([0,T];M^{q,p}_{(\omega )}(\rr d)),
\label{Eq:StrichEstMod1}
\\[1ex]
S_j &:\, & L^{p_0}([0,T];M^{p,q}_{(\omega )}(\rr d))
&\to
L^{r_0}([0,T];W^{p,q}_{(\omega )}(\rr d)),
\label{Eq:StrichEstMod2}
\\[1ex]
S_j &:\, & L^{p_0}([0,T];W^{q,p}_{(\omega )}(\rr d))
&\to
L^{r_0}([0,T];M^{q,p}_{(\omega )}(\rr d)),
\label{Eq:StrichEstMod3}
\intertext{and}
S_j &:\, & L^{p_0}([0,T];W^{q,p}_{(\omega )}(\rr d))
&\to
L^{r_0}([0,T];W^{p,q}_{(\omega )}(\rr d)),
\label{Eq:StrichEstMod4}
\end{alignat}
$j=1,2$.
\end{thm}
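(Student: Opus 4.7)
The plan is to combine the fixed-time estimates from Theorems \ref{Thm:FrFTModSpec1} and \ref{Thm:FrFTModSpec2} (transferred to the propagator via the identification \eqref{Eq:FracFourTransHarmProp}) with a scalar convolution bound of Hardy--Littlewood--Sobolev or Young type on the bounded interval $[0,T]$. Setting $c=0$ and $\varrho =4\tau /\pi$ in \eqref{Eq:FracFourTransHarmProp} gives $e^{-i\tau H_x}=e^{-i\tau d}\mascF _{4\tau /\pi }$; since $\omega \in \mascP _{\! A,r}(\rr {2d})$ is invariant under rotations in each $(x_j,\xi _j)$-plane, the weight $\omega _\varrho$ defined in \eqref{Eq:FracFourWeightsRel} coincides with $\omega$, and Theorems \ref{Thm:FrFTModSpec1}, \ref{Thm:FrFTModSpec2} yield four fixed-time estimates of the shape
$$
\nm {e^{-i\tau H_x}f}Y \lesssim \chi (\tau )^{-\alpha}\nm fX,
\qquad \alpha =d(1/q-1/p)\in [0,1),
$$
where $\chi (\tau )$ is either $|\sin (2\tau )|$ or $|\cos (2\tau )|$, and $(X,Y)$ ranges over the four input/output pairs corresponding to \eqref{Eq:StrichEstMod1}--\eqref{Eq:StrichEstMod4}.

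For $F\in C([0,T];M^1(\rr d))$, applying the fixed-time bound pointwise in $t$ together with Minkowski's integral inequality reduces each of the four mappings to the scalar convolution estimate
$$
\nm {K* g}{L^{r_0}([0,T])}\lesssim \nm g{L^{p_0}([0,T])},
\qquad K(\tau )=\chi (\tau )^{-\alpha},\quad g(s)=\nm {F(s)}X ,
$$
uniformly for $S_1$ and $S_2$ (whose integration domains $[0,t]$ and $[0,T]$ both fit the same convolution framework with $K$ restricted to $[-T,T]$). The zeros of $\chi$ on $[-T,T]$ form a finite set $\{\tau _k\}$, and after a suitable partition $K$ is dominated by a finite sum of translated Riesz kernels $|\tau -\tau _k|^{-\alpha}$. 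Writing $\beta =1+1/r_0-1/p_0$, three cases arise. When $r_0\le p_0$ (so $\beta \ge 1>\alpha$), the bound follows from $K\in L^1([-T,T])$, Young's inequality on $L^{p_0}$, and the embedding $L^{p_0}([0,T])\subset L^{r_0}([0,T])$. When $r_0>p_0$ and $\alpha <\beta$ strictly, $K\in L^{1/\beta}([-T,T])$ (since $\alpha /\beta <1$), and standard Young's inequality closes the argument. When $r_0>p_0$ and $\alpha =\beta$, the strict-inequality hypothesis in the theorem forces $1<p_0<r_0<\infty$, placing us at the non-endpoint exponents of the classical Hardy--Littlewood--Sobolev inequality applied to each translated Riesz kernel.

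Density of $C([0,T];M^1(\rr d))$ in $L^{p_0}([0,T];X)$ for $p_0<\infty$ (which follows from density of $M^1$ in $X$ and approximation by simple step functions) combined with the a priori estimate uniquely extends $S_j$ to each of the claimed continuous mappings; for $p_0=\infty$, $S_j$ is defined directly by its integral formula and continuity follows from the same scalar bound. The principal technical point is the correct interplay between Young's inequality, which covers strict $\alpha <\beta$ including the endpoint exponents $p_0=1$ and $r_0=\infty$, and the Hardy--Littlewood--Sobolev inequality, which admits the critical equality $\alpha =\beta$ only for non-endpoint $p_0,r_0$; the strict-inequality clause in the hypothesis is designed precisely to respect this dichotomy.
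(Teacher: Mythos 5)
Your argument is essentially the paper's own proof: the authors likewise combine the fixed-time bounds of Theorems \ref{Thm:FrFTModSpec1} and \ref{Thm:FrFTModSpec2} with Minkowski's integral inequality to reduce everything to a scalar convolution estimate on $[0,T]$ with kernel $|\sin (t-s)|^{-\alpha}$ or $|\cos (t-s)|^{-\alpha}$, which they isolate as Lemma \ref{Lemma:HardyLittlewoodSobolev} and prove by dominating the kernel by finitely many translated Riesz kernels and invoking the Hardy--Littlewood--Sobolev inequality, with the endpoints $p_0=1$ and $r_0=\infty$ handled separately by Minkowski and H{\"o}lder. Your case split (Young for $\alpha <\beta$, HLS only at the critical equality $\alpha =\beta$) is a slight reorganization of the same content, and your observation that rotation invariance of $\omega \in \mascP _{\! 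A,r}$ gives $\omega _\varrho =\omega$ is exactly what the paper uses implicitly.
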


\par

Theorem \ref{Thm:StrichEstMod1} can also be formulated
in the following way (cf. Theorem \ref{Thm:StrichEstMod1Intro}
in the introduction).

\par

\renewcommand{\rubrik}{Theorem \ref{Thm:StrichEstMod1Intro}$'$}

\par

\begin{tom}
Let $p,p_0,q\in [1,\infty ]$ and
$r_0\in (0,\infty ]$ be such that
\begin{equation*}
0\le d\left (\frac 1q-\frac 1p\right )
<1,
\quad
d\left (\frac 1q-\frac 1p\right )
\le 1+\frac 1{r_0}-\frac 1{p_0},
\end{equation*}
with strict inequalities when
$q<p$ and $p_0=1$, or when
$q<p$ and $r_0=\infty$.
Also let $\omega \in \mascP _{\! A,r}(\rr {2d})$.
Then $S_1$ and $S_2$ from $C([0,T];M^1(\rr d))$ to
$L^\infty ([0,T];M^1(\rr d))$
are uniquely extendable to
continuous mappings
\begin{alignat*}{2}
S_j &:\, & L^{p_0}([0,T];M^{p,q}_{(\omega )}(\rr d)+W^{q,p}_{(\omega )}(\rr d))
&\to
L^{r_0}([0,T];M^{q,p}_{(\omega )}(\rr d)\bigcap W^{p,q}_{(\omega )}(\rr d)),
\end{alignat*}
$j=1,2$, and
\begin{multline}
\nm {S_jF}{L^{r_0}([0,T];M^{q,p}_{(\omega )}(\rr d))}
+
\nm {S_jF}{L^{r_0}([0,T];W^{p,q}_{(\omega )}(\rr d))}
\\[1ex]
\lesssim
\min \left (
\nm {F}{L^{p_0}([0,T];M^{p,q}_{(\omega )}(\rr d))},
\nm {F}{L^{p_0}([0,T];W^{q,p}_{(\omega )}(\rr d))}
\right ) ,\quad j=1,2.
\end{multline}
\end{tom}

\par

We need some preparations for the proof of Theorem
\ref{Thm:StrichEstMod1} and start with the following.

\par

\begin{lemma}\label{Lemma:HardyLittlewoodSobolev}
Suppose that $p_0,q_0,r_0 \in [1,\infty ]$
satisfy
\begin{equation}\label{Eq:YoungIneqCond}
\frac 1{p_0}+\frac 1{q_0}\le
1+\frac 1{r_0}
\quad \text{and}\quad
q_0>1,
\end{equation}
with strict inequality when
$q_0<\infty$ and $p_0=1$, or when $q_0<\infty$ and
$r_0=\infty$. Let $T>0$ and
$\phi _{q_0}(t)=|\sin t|^{-\frac 1{q_0}}$
or $\phi _{q_0}(t)=|\cos t|^{-\frac 1{q_0}}$,
$t\in \mathbf R$. Also let $T_1$ and
$T_2$ be the mappings from $C[0,T]$ to
$C[0,T]$, given by
$$
(T_1h)(t) \equiv \int _0^T \phi _{q_0}(t-s)
h(s)\, ds
\quad \text{and}\quad
(T_2h)(t) \equiv \int _0^t \phi _{q_0}(t-s)
h(s)\, ds,
$$
when $0\le t\le T$. Then $T_1$ and $T_2$ are uniquely extendable to
continuous mappings from $L^{p_0}[0,T]$ to
$L^{r_0}[0,T]$.
\end{lemma}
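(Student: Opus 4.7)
The plan is to realise both $T_1$ and $T_2$ as convolutions on the real line against a suitably truncated kernel, and then invoke Young-type inequalities. First I would reduce the cosine case to the sine case via the identity $|\cos t| = |\sin(t+\pi/2)|$, and henceforth take $\phi_{q_0}(t) = |\sin t|^{-1/q_0}$. Extending $h\in L^{p_0}[0,T]$ by zero to $\tilde h \in L^{p_0}(\mathbf R)$ and setting $\psi = \phi_{q_0}\chi_{[-T,T]}$, the inclusion $t-s\in[-T,T]$ valid for $s,t\in[0,T]$ gives $T_1 h(t) = (\psi\ast \tilde h)(t)$ on $[0,T]$, and similarly $T_2h(t)=((\psi\chi_{[0,\infty)})\ast \tilde h)(t)$ for $t\in[0,T]$. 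The task thus reduces to a convolution estimate on $\mathbf R$.

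The essential input is the integrability of $\psi$. Since $\sin$ has only finitely many zeros $\{t_k\}$ in $[-T,T]$ and $|\sin t|\asymp |t - t_k|$ near each of them, one obtains $\psi\in L^{q_0,\infty}(\mathbf R)$ when $q_0\in(1,\infty)$, and moreover $\psi\in L^{\tilde q_0}(\mathbf R)$ for every $\tilde q_0\in[1,q_0)$; the case $q_0=\infty$ is trivial since then $\psi\in L^\infty$.

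In the ``interior'' regime $1<p_0<\infty$ and $r_0<\infty$, O'Neil's weak-Young inequality $L^{p_0}\ast L^{q_0,\infty}\hookrightarrow L^{r_0}$ with $1/p_0+1/q_0 = 1+1/r_0$ handles the equality case of \eqref{Eq:YoungIneqCond} directly, and the strict inequality case follows by embedding on the finite-measure set $[0,T]$. In the remaining cases, namely strict inequality together with $p_0=1$ or $r_0=\infty$, I would pick $\tilde q_0\in[1,q_0)$ and $\tilde r_0\ge r_0$ satisfying $1/p_0+1/\tilde q_0=1+1/\tilde r_0$, apply the classical Young inequality to obtain $\psi\ast\tilde h\in L^{\tilde r_0}(\mathbf R)$, and finally exploit the embedding $L^{\tilde r_0}[0,T]\hookrightarrow L^{r_0}[0,T]$, valid on the finite measure space $[0,T]$ because $\tilde r_0\ge r_0$.

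The main obstacle will be the careful bookkeeping at the endpoints. When $p_0=1$ and $q_0<\infty$ one is forced to take $\tilde q_0 = r_0$, which requires $r_0<q_0$; when $r_0=\infty$ and $q_0<\infty$ one must instead take $\tilde q_0=p_0'$ and argue by H\"older, which requires $p_0'<q_0$. Both conditions are exactly what the strict inequality stipulations accompanying \eqref{Eq:YoungIneqCond} guarantee, so the proof will ultimately hinge on verifying this compatibility in each endpoint configuration.
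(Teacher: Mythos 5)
Your proposal follows essentially the same route as the paper: the main regime $1<p_0$, $r_0<\infty$ is handled by the weak Young/Hardy--Littlewood--Sobolev inequality applied to the truncated kernel (the paper cites \cite[Theorem 4.5.3]{Ho1} after splitting the kernel into finitely many pieces, each singular at a single zero of $\sin$ or $\cos$, where you instead observe directly that the whole truncated kernel lies in $L^{q_0,\infty}$ --- an immaterial difference), and the endpoint cases $p_0=1$ and $r_0=\infty$ are treated exactly as in the paper via Minkowski's and H\"older's inequalities, with the strict-inequality hypotheses guaranteeing $\phi_{q_0}\in L^{r_0}[0,T]$ and $\phi_{q_0}\in L^{p_0'}[0,T]$, respectively. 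The argument is correct and complete up to the same routine bookkeeping the paper itself leaves implicit.
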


\par

\begin{proof}
We only prove the assertions for $T_1$.
The assertions for $T_2$ follows by similar
arguments and is left for the reader.

\par

First suppose that $1<p_0$ and $r_0<\infty$,
and let $\psi$ be a measurable
complex-valued function on $\mathbf R$
such that
$$
|\psi (t)|\lesssim |t|^{-\frac 1{q_0}}.
$$
We observe that if $k$ is an integer, then
\begin{alignat*}{3}
|\phi _{q_0}(t)| &\lesssim
|t-k\pi |^{-\frac 1{q_0}} &
\quad &\text{when} &\quad
|t-k\pi |&\le \frac \pi 2,
\intertext{or}
|\phi _{q_0}(t)| &\lesssim
|t-(k+{\textstyle{\frac 12}})
\pi |^{-\frac 1{q_0}} &
\quad &\text{when} &\quad
|t-(k+{\textstyle{\frac 12}})\pi |
&\le \frac \pi 2.
\end{alignat*}
Since $L^p[0,T]$ decreases with $p$, we may
assume that equality is attained in the first
inequality in \eqref{Eq:YoungIneqCond}.
Then it follows from Lebesgue's theorem and
Hardy-Littlewood-Sobolev inequality in
\cite[Theorem 4.5.3]{Ho1} that
$f\mapsto \psi *f$ from $C_0(\mathbf R)$ to
$C(\mathbf R)$ is uniquely extendable to a
continuous map from $L^{p_0}(\mathbf R)$
to $L^{r_0}(\mathbf R)$.

\par

We may now divide $T_1$ into a finite sum
$$
T_1 = \sum _{j=1}^NT_{1,N},
$$
where
$$
(T_{1,j}f)(t) = \int _{a_j}^{b_j}f(t-s)\psi _j(s)\, ds ,
$$
with $\psi _j$ being measurable functions on $\mathbf R$
which satisfy
$$
|\psi _j(t)|\lesssim |t-a_j|^{-\frac 1{q_0}}
\quad \text{or}\quad
|\psi _j(t)|\lesssim |t-b_j|^{-\frac 1{q_0}},
$$
for some $a_j\le b_j$, $j=1,\dots ,N$.
Since each $T_{1,j}$ is uniquely extendable to
a continuous map from $L^{p_0}(\mathbf R)$
to $L^{r_0}(\mathbf R)$, in view of the previous
part of the proof, the asserted continuity 
assertions for $T_1$ follows in the case
$1<p_0$ and $r_0<\infty$.

\par

Next suppose that $p_0=1$.
Then $r_0<q_0$ and $q_0<\infty$, or $r_0=q_0=\infty$.
This implies that
$\phi _{q_0}\in L^{r_0}[0,T]$. By Minkowski's inequality
we obtain
$$
\nm {T_1h}{L^{r_0}[0,T]}
\le
\int _0^T \nm {\phi _{q_0}(\cdo -s)}{L^{r_0}[0,T]}|h(s)|\, ds
\le 2\nm {\phi _{q_0}}{L^{r_0}[0,T]}\nm h{L^1[0,T]},
$$
and the result follows in the case $p_0=1$.

\par

Finally suppose that $r_0=\infty$ and $q_0<\infty$. Then
$p_0'<q_0$, giving that $\phi _{q_0}\in L^{p_0'}[0,T]$.
Hence, H{\"o}lder's inequality gives
$$
|(T_1h)(t)|
\le
\nm {\phi _{q_0}(t-\cdo )}{L^{p_0'}[0,T]}\nm h{L^{p_0}[0,T]}
\le
2\nm {\phi _{q_0}}{L^{p_0'}[0,T]}\nm h{L^{p_0}[0,T]},
$$
and the result follows.
\end{proof}

\par

\begin{proof}[Proof of Theorem \ref{Thm:StrichEstMod1}]
We only prove the continuity for 
\eqref{Eq:StrichEstMod1}.
The other cases follow by similar arguments
and are left for the reader.

\par

Since $L^{r_0}[0,T]$ is decreasing with $r_0$
and that \eqref{Eq:StrichEstMod1LebExp}
is obviously true for some $r_0\ge 1$, we may
assume that $r_0\ge 1$. We also observe that
\eqref{Eq:StrichEstMod1LebExp} implies that
$q\le p$, which makes it possible to apply
Theorem \ref{Thm:FrFTModSpec1}.

\par

Let $q_0\in \mathbf R\cup \{ \infty \}$ be defined by
$$
\frac 1{q_0} = d\left (\frac 1q-\frac 1p\right )
$$
and let $\phi _{q_0}$ be the same as in Lemma
\ref{Lemma:HardyLittlewoodSobolev}.
Then $q_0\in (1,\infty ]$. A combination of
Theorem \ref{Thm:FrFTModSpec1} and Minkowski's
inequality gives
\begin{align*}
\nm {(SF)(t,\cdo )}{M^{q,p}_{(\omega )}}
&\le
\int _0^T
\nm {(e^{-i\pi (t-s)H_{x,c}}F(s,\cdo ))}{M^{q,p}_{(\omega )}}
\, ds
\\[1ex]
&\lesssim
\int _0^T
\phi _{q_0}(t-s)\nm {F(s,\cdo ))}{M^{p,q}_{(\omega )}}
\, ds .
\end{align*}
By applying the $L^{r_0}[0,T]$ norm on the last
inequality and using Lemma
\ref{Lemma:HardyLittlewoodSobolev} we get
$$
\nm {SF}{L^{r_0}([0,T];M^{q,p}_{(\omega )})}
\lesssim
\nm {SF}{L^{p_0}([0,T];M^{p,q}_{(\omega )})},
$$
and the result follows.
\end{proof}

\par

Next we shall apply Theorems \ref{Thm:FrFTModSpec1}
and \ref{Thm:StrichEstMod1} to deduce continuity
properties for the operator $E$ in
\eqref{Eq:DefERAdjOp}$'$, and thereby obtain
Strichartz estimates for the harmonic oscillator
propagator $e^{-itH_{x,c}}$. The first
result is the following and is a straight-forward
consequence of Theorem \ref{Thm:FrFTModSpec1}.
The details are left for the reader.
Here we let $L^p_{\text w}(\Omega )$ be the weak
$L^p(\Omega )$ space when $p\in (0,\infty ]$,
which is often denoted by
$L^{p,\infty}(\Omega )$ or $L^{p,*}(\Omega )$
in the literature (see e.{\,}g. \cite{BeLo}).

\par

\begin{thm}\label{Thm:StrichEstMod2}
Let 
$c\in \mathbf C$,
$\omega\in \mascP _E^r(\rr {2d})$ and
$p,q,r_0\in (0,\infty]$ be such that $q\le p$
and
\begin{equation}\label{Eq:StrichEstMod2LebExpCond}
\frac 1{r_0}=d\left (\frac 1q -\frac 1p\right ).
\end{equation}
Then $E$ in \eqref{Eq:DefERAdjOp}$'$
is uniquely extendable to a continuous map
$$
E : M^{p,q}_{(\omega )}(\rr d)
+
W^{q,p}_{(\omega )}(\rr d)
\to
L^{r_0}_{{\text {\rm {w}}}}([0,T];M^{q,p}_{(\omega )}(\rr d))
\bigcap
L^{r_0}_{{\text {\rm {w}}}}([0,T];W^{p,q}_{(\omega )}(\rr d)),
$$
and
\begin{multline}
\nm {Ef}{L^{r_0}_{{\text {\rm {w}}}}
([0,T];M^{q,p}_{(\omega )})}
+
\nm {Ef}{L^{r_0}_{{\text {\rm {w}}}}
([0,T];W^{p,q}_{(\omega )})}
\lesssim
\nm f{ M^{p,q}_{(\omega )}
+
W^{q,p}_{(\omega )}},
\\[1ex]
f\in M^{p,q}_{(\omega )}(\rr d)
+
W^{q,p}_{(\omega )}(\rr d).
\label{Eq:StrichEstMod2A}
\end{multline}
\end{thm}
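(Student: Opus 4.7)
The plan is to deduce the theorem from the fixed-time dispersive estimates \eqref{Eq:FrFTModSpec1A}--\eqref{Eq:FrFTModSpec2B} by integrating in $t$ in the weak-Lebesgue sense. First I would identify $(Ef)(t,\cdo )$ with a fractional Fourier transform. Since $H_{x,\varrho ,c} = \varrho H_x + c$ for scalar $\varrho \in \mathbf R$, the choice $\varrho = 4t/\pi$ gives $e^{-i(\pi /4)H_{x,\varrho ,c}} = e^{-ic\pi /4}e^{-itH_x}$, and combined with \eqref{Eq:FracFourTransHarmProp}$'$ one sees that $(Ef)(t,\cdo ) = C(t)\,\mascF _{\! 4t/\pi }f$, where $C(t)$ depends only on $t$ and $c$ and is bounded on $[0,T]$. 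Because $\omega \in \mascP _{\! A,r}(\rr {2d})$ is rotationally invariant in each phase-space pair $(x_j,\xi _j)$, the rotation $A_{d,4t/\pi }$ fixes $\omega$, so $\omega _\varrho = \omega$ in \eqref{Eq:FracFourWeightsRel}.

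Next, I would decompose any $f\in M^{p,q}_{(\omega )}(\rr d)+W^{q,p}_{(\omega )}(\rr d)$ as $f=f_1+f_2$ with $f_1\in M^{p,q}_{(\omega )}$ and $f_2\in W^{q,p}_{(\omega )}$. Applying \eqref{Eq:FrFTModSpec1A} to $\mascF _{\! 4t/\pi }f_1$ and \eqref{Eq:FrFTModSpec2B} to $\mascF _{\! 4t/\pi }f_2$, and using the identity $d(1/p-1/q)=-1/r_0$ from \eqref{Eq:StrichEstMod2LebExpCond}, one obtains the pointwise bound
\begin{equation*}
\nm {(Ef)(t,\cdo )}{M^{q,p}_{(\omega )}}
\lesssim
|\sin (2t)|^{-1/r_0}\,\nm {f_1}{M^{p,q}_{(\omega )}}
+|\cos (2t)|^{-1/r_0}\,\nm {f_2}{W^{q,p}_{(\omega )}}.
\end{equation*}
Interchanging the roles of \eqref{Eq:FrFTModSpec2A} and \eqref{Eq:FrFTModSpec1B} yields the symmetric estimate
\begin{equation*}
\nm {(Ef)(t,\cdo )}{W^{p,q}_{(\omega )}}
\lesssim
|\cos (2t)|^{-1/r_0}\,\nm {f_1}{M^{p,q}_{(\omega )}}
+|\sin (2t)|^{-1/r_0}\,\nm {f_2}{W^{q,p}_{(\omega )}}.
\end{equation*}

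To pass to the time-integrated statement, I would verify that both $t\mapsto |\sin (2t)|^{-1/r_0}$ and $t\mapsto |\cos (2t)|^{-1/r_0}$ lie in $L^{r_0}_{\text{w}}([0,T])$: away from the finite set of zeros of $\sin (2t)$, $\cos (2t)$ in $[0,T]$ they are bounded, and near each zero $t_0$ they behave like $|t-t_0|^{-1/r_0}$, so their distribution functions decay like $\lambda ^{-r_0}$. Applying the (quasi-)triangle inequality for $L^{r_0}_{\text{w}}$ to each of the two pointwise bounds and then taking the infimum over all splittings $f=f_1+f_2$ gives \eqref{Eq:StrichEstMod2A}. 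Uniqueness of the continuous extension follows from density of $\maclH _0(\rr d)$ (equivalently $\Sigma _1(\rr d)$) in $M^{p,q}_{(\omega )}+W^{q,p}_{(\omega )}$ when $p,q<\infty$, with the endpoint cases handled by standard weak-$*$ arguments. The main technical obstacle is the quasi-triangle inequality for $L^{r_0}_{\text{w}}$ when $r_0<1$, but since only two summands appear, this contributes only a harmless multiplicative constant depending on $r_0$.
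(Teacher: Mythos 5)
Your proposal is correct and is precisely the argument the paper intends: it states Theorem \ref{Thm:StrichEstMod2} as ``a straight-forward consequence of Theorem \ref{Thm:FrFTModSpec1}'' with details left to the reader, and your write-up supplies exactly those details — the identification $(Ef)(t,\cdot)=C(t)\mascF_{\!4t/\pi}f$, the invariance $\omega_\varrho=\omega$ for rotationally invariant weights, the fixed-time bounds with singular factors $|\sin(2t)|^{-1/r_0}$ and $|\cos(2t)|^{-1/r_0}$ (correctly drawing on Theorem \ref{Thm:FrFTModSpec2} as well for the mixed $M$/$W$ combinations), and their membership in $L^{r_0}_{\mathrm w}[0,T]$.
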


\par

Here recall that if $\maclB _1$ and $\maclB_2$
are quasi-Banach spaces, then $\maclB _1+\maclB _2$
is the quasi-Banach space
$$
\sets{f_1+f_2}{f_1\in \maclB _1,\ f_2\in \maclB _2}
$$
equipped with the quasi-norm
$$
\nm f{\maclB_1+\maclB _2}
\equiv
\inf _{f=f_1+f_2} \left (
\nm {f_1}{\maclB _1}+
\nm {f_2}{\maclB _2}
\right ).
$$

\par

\begin{rem}
If $p=q$ in Theorem \ref{Thm:StrichEstMod2}, then
$r_0=\infty$ in \eqref{Eq:StrichEstMod2LebExpCond}.
By Theorems \ref{Thm:FrFTModSpec1}
and \ref{Thm:FrFTModSpec2} it follows that $E$ in
\eqref{Eq:DefERAdjOp}$'$
is uniquely extendable to a continuous map
$$
E : M^{p}_{(\omega )}(\rr d)
\to
L^{\infty}(\mathbf R;M^{p}_{(\omega )}(\rr d)),
$$
and
\begin{equation}
\tag*{(\ref{Eq:StrichEstMod2A})$'$}
\nm {Ef}{L^{\infty}(\mathbf R;M^{p}_{(\omega )})}
\lesssim
\nm f{M^{p}_{(\omega )}},
\qquad
f\in M^{p}_{(\omega )}(\rr d).
\end{equation}
\end{rem}

\par

The next result follows by combining Theorem
\ref{Thm:StrichEstMod1} with general techniques
in Section 2 in \cite{GinVel} for Strichartz estimates
in order to deduce further continuity properties for
$E$ and $E^*$.

\par

\begin{thm}\label{Thm:StrichEstMod3}
Let $p,p_0\in [1,\infty ]$ be such that
\begin{equation}\label{Eq:StrichEstMod1LebExp2}
2\le p<\frac {2d}{d-1}
\quad \text{and}\quad
d\left ( 1-\frac 2p\right ) = \frac 2{p_0'}.
\end{equation}
Then $E^*$ in \eqref{Eq:DefERAdjOp}$'$
is uniquely extendable to
continuous mappings
\begin{alignat}{2}
E^* &:\, & L^{p_0}([0,T];M^{p,p'}(\rr d))
&\to
L^2(\rr d),
\label{Eq:StrichEstMod21}
\\[1ex]
E^* &:\, & L^{p_0}([0,T];W^{p',p}(\rr d))
&\to
L^2(\rr d),
\label{Eq:StrichEstMod22}
\end{alignat}
and  $E$ in \eqref{Eq:DefEROp}$'$
is uniquely extendable to
continuous mappings
\begin{alignat}{2}
E &:\, & L^2(\rr d)
&\to
L^{p_0'}([0,T];M^{p',p}(\rr d)),
\label{Eq:StrichEstMod23}
\\[1ex]
E &:\, & L^2(\rr d)
&\to
L^{p_0'}([0,T];W^{p,p'}(\rr d)),
\label{Eq:StrichEstMod24}
\end{alignat}
\end{thm}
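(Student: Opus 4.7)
The approach will be the Ginibre--Velo $TT^{*}$ argument, with $T=E$ and $T^{*}=E^{*}$ viewed between $L^{2}(\rr d)$ and the various space-time spaces. The starting point is the identity $E\circ E^{*}=S_{2}$, which follows by a direct computation from the one-parameter group property of $e^{-itH_{x,\varrho ,c}}$ on $L^{2}(\rr d)$:
\begin{equation*}
(EE^{*}F)(t,x) = e^{-itH_{x,\varrho ,c}}\Big(\int _{I}e^{isH_{x,\varrho ,c}}F(s,\cdot )\, ds\Big)(x) = \int _{I}(e^{-i(t-s)H_{x,\varrho ,c}}F(s,\cdot ))(x)\, ds.
\end{equation*}

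Next I will apply Theorem \ref{Thm:StrichEstMod1} with the choices $q:=p'$, $r_{0}:=p_{0}'$, and $\omega \equiv 1$. The first condition in \eqref{Eq:StrichEstMod1LebExp} becomes $0\le d(1-2/p)<1$, equivalent to $2\le p<2d/(d-1)$, while the second becomes $d(1-2/p)=2/p_{0}'\le 2/p_{0}'$, which holds with equality by hypothesis on $p_{0}$. The exceptional cases requiring strict inequality in Theorem \ref{Thm:StrichEstMod1} do not occur: both $p_{0}=1$ and $r_{0}=p_{0}'=\infty $ force $p=2$, in which case $q=p'=2=p$, so the hypothesis $q<p$ fails. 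Hence \eqref{Eq:StrichEstMod1} and \eqref{Eq:StrichEstMod4} yield continuity of $S_{2}$ from $L^{p_{0}}([0,T];M^{p,p'}(\rr d))$ to $L^{p_{0}'}([0,T];M^{p',p}(\rr d))$, and from $L^{p_{0}}([0,T];W^{p',p}(\rr d))$ to $L^{p_{0}'}([0,T];W^{p,p'}(\rr d))$.

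To conclude, I will use duality in the standard way. For $F,G$ in the dense subspace $C_{c}^{\infty }([0,T];\Sigma _{1}(\rr d))$ of $L^{p_{0}}([0,T];M^{p,p'}(\rr d))$ (density from Proposition \ref{p1.4B}\,(1), which applies since $p,p'\in [1,\infty )$), the $L^{2}(\rr d)$-unitarity of $e^{-itH_{x,\varrho ,c}}$ gives
\begin{equation*}
(E^{*}F,E^{*}G)_{L^{2}(\rr d)} = \langle EE^{*}F,G\rangle _{L^{2}([0,T]\times \rr d)} = \langle S_{2}F,G\rangle .
\end{equation*}
Setting $F=G$, combined with the continuity of $S_{2}$ and the duality $(M^{p',p}(\rr d))^{*}=M^{p,p'}(\rr d)$, yields $\nm {E^{*}F}{L^{2}}^{2}\lesssim \nm F{L^{p_{0}}(I;M^{p,p'})}^{2}$, which is \eqref{Eq:StrichEstMod21}. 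The dual estimate \eqref{Eq:StrichEstMod23} then follows from $L^{2}=(L^{2})^{*}$ and $(L^{p_{0}}(I;M^{p,p'}))^{*}=L^{p_{0}'}(I;M^{p',p})$, and the Wiener-amalgam variants \eqref{Eq:StrichEstMod22} and \eqref{Eq:StrichEstMod24} come out by repeating the argument with $W^{p',p}$ in place of $M^{p,p'}$ and using $(W^{p,p'})^{*}=W^{p',p}$.

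The main technical obstacle will be handling the endpoint $p=2$, where $p_{0}=1$ and $r_{0}=\infty$: here $L^{\infty}$-duality is delicate, and the density argument above is not immediately available on the $L^{\infty}([0,T])$ side. However, at this endpoint $M^{p,p'}=W^{p',p}=L^{2}(\rr d)$, and the estimates reduce to the trivial unitary bound $\nm {e^{-itH_{x,\varrho ,c}}f}{L^{2}}\le \nm f{L^{2}}$, so no actual interpolation or duality is needed there. Away from this endpoint both $p_{0}$ and $p_{0}'$ lie in $(1,\infty )$ and everything is reflexive, so the duality step goes through verbatim.
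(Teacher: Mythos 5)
Your proposal is correct and follows essentially the same route as the paper: specialize Theorem \ref{Thm:StrichEstMod1} with $q=p'$, $r_0=p_0'$, $\omega =1$ to get boundedness of $S_2=E\circ E^*$ between a space and its dual, then invoke the standard Ginibre--Velo $TT^*$ equivalences (the paper simply cites (2.1)--(2.3) of \cite{GinVel} where you spell the duality out). Your explicit verification that the exceptional strict-inequality cases of Theorem \ref{Thm:StrichEstMod1} cannot occur, and your handling of the endpoint $p=2$, are correct and slightly more detailed than the paper's argument.
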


\par

\begin{proof}
By choosing $r_0=p_0'$, $q=p'$ and $\omega =1$,
it follows that \eqref{Eq:StrichEstMod1} and
\eqref{Eq:StrichEstMod4} takes the forms
\begin{alignat}{2}
S_2 &:\, & L^{p_0}([0,T];M^{p,p'}(\rr d))
&\to
L^{p_0'}([0,T];M^{p',p}(\rr d))
\tag*{(\ref{Eq:StrichEstMod1})$'$}
\intertext{and}
S_2 &:\, & L^{p_0}([0,T];W^{p',p}(\rr d))
&\to
L^{p_0'}([0,T];W^{p,p'}(\rr d)).
\tag*{(\ref{Eq:StrichEstMod4})$'$}
\end{alignat}
Since the ranks in \eqref{Eq:StrichEstMod1}$'$
and \eqref{Eq:StrichEstMod4}$'$ are the duals to
their domains, the result follows by straight-forward
applications of the equivalences (2.1)--(2.3) in
\cite{GinVel}.
\end{proof}

\par

By Theorem \ref{Thm:StrichEstMod3} and the
fact that $S_2=E\circ E^*$, we get the following
(see also \cite[Corollary 2.1]{GinVel}).

\par

\begin{thm}\label{Thm:StrichEstMod4}
Let $p_j,p_{0,j}\in [1,\infty ]$
be such that
\begin{equation}\label{Eq:StrichEstMod1LebExp4}
2\le p_j<\frac {2d}{d-1}
\quad \text{and}\quad
d\left ( 1-\frac 2{p_j}\right ) = \frac 2{p_{0,j}'},
\end{equation}
$j=1,2$. Then $S_2$ in \eqref{Eq:DefS2ROp}$'$
is uniquely extendable to
continuous mappings
\begin{alignat}{2}
S_2 &:\, & L^{p_{0,1}}([0,T];M^{p_1,p_1'}(\rr d))
&\to
L^{p_{0,2}'}([0,T];M^{p_2',p_2}(\rr d)),
\label{Eq:StrichEstMod41}
\\[1ex]
S_2 &:\, & L^{p_{0,1}}([0,T];M^{p_1,p_1'}(\rr d))
&\to
L^{p_{0,2}'}([0,T];W^{p_2,p_2'}(\rr d)),
\label{Eq:StrichEstMod42}
\\[1ex]
S_2 &:\, & L^{p_{0,1}}([0,T];W^{p_1',p_1}(\rr d))
&\to
L^{p_{0,2}'}([0,T];M^{p_2',p_2}(\rr d)),
\label{Eq:StrichEstMod43}
\intertext{and}
S_2 &:\, & L^{p_{0,1}}([0,T];W^{p_1',p_1}(\rr d))
&\to
L^{p_{0,2}'}([0,T];W^{p_2,p_2'}(\rr d)).
\label{Eq:StrichEstMod44}
\end{alignat}
The same holds true with $S_1$ in
place of $S$ at each occurrence.
\end{thm}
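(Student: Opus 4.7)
The plan is to derive all estimates for $S_2$ by factoring $S_2 = E\circ E^*$ and composing the four continuity statements for $E$ and $E^*$ from Theorem \ref{Thm:StrichEstMod3}, applied independently to the input and output indices. Concretely, since $(p_1,p_{0,1})$ and $(p_2,p_{0,2})$ both satisfy \eqref{Eq:StrichEstMod1LebExp2}, Theorem \ref{Thm:StrichEstMod3} gives $E^*$ continuous from $L^{p_{0,1}}([0,T];M^{p_1,p_1'}(\rr d))$ (or $L^{p_{0,1}}([0,T];W^{p_1',p_1}(\rr d))$) to $L^2(\rr d)$, and $E$ continuous from $L^2(\rr d)$ to $L^{p_{0,2}'}([0,T];M^{p_2',p_2}(\rr d))$ (or $L^{p_{0,2}'}([0,T];W^{p_2,p_2'}(\rr d))$). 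The four combinations of input/output $M$- and $W$-choices yield exactly \eqref{Eq:StrichEstMod41}--\eqref{Eq:StrichEstMod44}. Note that these extensions are unique because $C([0,T];M^1(\rr d))$ is dense in each $L^{p_{0,1}}([0,T];X_1)$ with $X_1\in \{M^{p_1,p_1'}(\rr d),W^{p_1',p_1}(\rr d)\}$ under the ranges of exponents permitted by \eqref{Eq:StrichEstMod1LebExp4}, so the mapping obtained from $E\circ E^*$ agrees with the uniquely extended $S_2$.

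For the statement about $S_1$, I would invoke the standard Christ--Kiselev transfer principle, exactly in the form used in \cite{GinVel}. The operator $S_2$ integrates against the full interval $[0,T]$ while $S_1$ integrates only over $[0,t]$, i.e.\ $S_1 F(t,\cdot )=S_2(\chi _{[0,t]}F)(t,\cdot )$. The Christ--Kiselev lemma then says that whenever $p_{0,1}<p_{0,2}'$ (strict), any bound $L^{p_{0,1}}([0,T];X_1)\to L^{p_{0,2}'}([0,T];X_2)$ for $S_2$ automatically transfers to the retarded operator $S_1$ with the same norms up to a constant depending only on the exponents. Under the hypothesis \eqref{Eq:StrichEstMod1LebExp4} with $p_j<2d/(d-1)$ the admissibility relation $d(1-2/p_j)=2/p_{0,j}'$ forces $p_{0,j}'>2$, hence $p_{0,j}<2$ as well, which delivers $p_{0,1}<p_{0,2}'$ (strictly) in all permitted cases. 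This yields the $S_1$ analogues of \eqref{Eq:StrichEstMod41}--\eqref{Eq:StrichEstMod44}.

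The main obstacle I expect is the passage from $S_2$ to $S_1$. One cannot in general argue by $E\circ E^*$ for $S_1$ because the retarded kernel $\chi_{\{s\le t\}}e^{-i(t-s)H_{x,c}}$ is not self-adjoint; hence the clean composition argument fails and the Christ--Kiselev lemma is the natural substitute. A secondary technical point is verifying the density of the test class $C([0,T];M^1(\rr d))$ inside the relevant mixed Lebesgue--modulation and Lebesgue--Wiener amalgam spaces so that the extension produced by $E\circ E^*$ actually coincides with the continuous extension of $S_2$ (respectively $S_1$); this is routine because $M^1(\rr d)$ embeds densely into every $M^{p,q}_{(\omega)}(\rr d)$ and $W^{p,q}_{(\omega)}(\rr d)$ with $p,q<\infty$, and the time variable can be handled by standard approximation in the Bochner sense. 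Once these two ingredients are in place, the theorem follows without further computation.
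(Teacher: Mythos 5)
Your proposal is correct and follows essentially the same route as the paper, which likewise obtains the four mappings by composing the $E^*$ and $E$ continuity statements of Theorem \ref{Thm:StrichEstMod3} through the factorization $S_2=E\circ E^*$, and handles the retarded operator $S_1$ by citing the transfer machinery of \cite[Corollary 2.1]{GinVel} (your explicit Christ--Kiselev argument, with the correctly verified strict inequality $p_{0,1}<2<p_{0,2}'$, is the standard modern form of that same step).
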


\par

\begin{rem}
We observe that \eqref{Eq:StrichEstMod44} is the
same as (45) in \cite{CorNic}.
\end{rem}

\par

\begin{rem}
We observe that the conditions \eqref{Eq:StrichEstMod1LebExp2}
and \eqref{Eq:StrichEstMod1LebExp4} imply that $p_0<2$
and $p_{0,j}<2$.
\end{rem}

\par

\subsection{Continuity properties for a
family of equations related to Schr{\"o}dinger
and heat equations}

\par

Next we consider more general equations, given by
\eqref{Eq:GenSchrEqParPot}, where $u_0$
is a suitable function or ultra-distribution on $\rr d$,
$F$ is a suitable function or ultra-distribution on
$\rr {d+1}$ and $R=H_{x,\varrho ,c}^r$ for some
$r\ge 0$, $\varrho \in \cc d$ and $c \in \mathbf C$.
That is, we consider
\begin{equation}\label{Eq:GenSchrEqParPotThird}
\begin{cases}
i\partial _tu-H_{x,\varrho ,c}^ru=F, 
\\[1ex]
u(0,x)=u_0(x),\qquad
(t,x)\in [0,T]\times \rr d,
\end{cases}
\end{equation}
where $T>0$ is fixed. Here we observe that
\eqref{Eq:GenSchrEqParPotThird} is a partial
differential equation when $r$ is an integer.
For general $r$, \eqref{Eq:GenSchrEqParPotThird}
becomes a pseudo-differential equation.

\par

By \eqref{Eq:FormalSol} it follows that the formal
solution is given by
\begin{equation}\label{Eq:FormalSolSec}
u(t,x) = (e^{-itH_{x,\varrho ,c}^r}u_0)(x)
-i\int _0^t (e^{i(t-s)H_{x,\varrho ,c}^r}
F(t,\cdo ))(x)\, ds.
\end{equation}
Hence, questions on well-posed properties for
the equation \eqref{Eq:GenSchrEqParPotThird}
rely completely on continuity properties of the
propagator
\begin{alignat}{2}
(E_{\varrho ,r,c}f)(t,x)
&\equiv
(e^{-itH_{x,\varrho ,c}^r}u_0)(x), 
\qquad
(t,x) \in [0,T]\times \rr d.
\label{Eq:DefEROpSec}
\intertext{as well as for the operators}
(S_{1,\varrho ,r,c}F)(t,x)
&= 
\int _0^t (e^{-i(t-s)H_{x,\varrho ,c}^r}F(s,\cdo ))(x)
\, ds, 
\qquad
(t,x) \in [0,T]\times \rr d,
\label{Eq:DefS1ROpSec}
\intertext{and}
(S_{2,\varrho ,r,c}F)(t,x)
&= 
\int _0^T (e^{-i(t-s)H_{x,\varrho ,c}^r}F(s,\cdo ))(x)
\, ds, 
\qquad
(t,x) \in [0,T]\times \rr d.
\label{Eq:DefS2ROpSec}
\end{alignat}
Here we remark that there are different definitions of
well-posed problems in the literature.
We say that the problem \eqref{Eq:GenSchrEqParPotThird}
is \emph{well-posed} if the solution $u=u(t,x)$ depends
continuously on the initial data $u_0$. If
\eqref{Eq:GenSchrEqParPotThird} fails to be well-posed, then 
\eqref{Eq:GenSchrEqParPotThird} is called \emph{ill-posed}.

\par

By Proposition \ref{Prop:BargPilSpacesNonCont}
it follows that the following is true, which shows that
the operator \eqref{Eq:DefEROpSec} easily become
discontinuous in the framework of classical
functions and (ultra-)distribution spaces. The details are
left for the reader.

\par

\begin{prop}\label{Prop:DiscontPropGSSpaces}
Let $r,T>0$, $c\in \mathbf C$ and $\varrho \in \cc d$
be such that $\operatorname{Im}(\varrho _j^r)>0$
for some $j\in \{1,\dots ,d\}$. Then following is true:
\begin{enumerate}
\item $E_{\varrho ,r,c}$ in \eqref{Eq:DefEROpSec}
is discontinuous from $\mascS (\rr d)$
to $\mascS '(\rr d)$;

\vrum

\item if in addition $r\ge 1$, then
$E_{\varrho ,r,c}$ in \eqref{Eq:DefEROpSec}
is discontinuous from $\maclS _{1/2}(\rr d)$
to $\maclS _{1/2} '(\rr d)$.
\end{enumerate}
\end{prop}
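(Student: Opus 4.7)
The starting point is the Hermite diagonalisation \eqref{Eq:HarmonicPowersPropagator}$'$, which reduces the action of $e^{-itH_{x,\varrho,c}^r}$ on any $g\in\maclH_0'(\rr d)$ to multiplication of the $\alpha$-th Hermite coefficient by $e^{-it(2\scal\alpha\varrho+\sumvec(\varrho)+c)^r}$. I will fix $t>0$ and an index $j_0$ with $\IM(\varrho_{j_0}^r)>0$, and evaluate this multiplier along the ray $\alpha=ke_{j_0}$: as $k\to\infty$,
$$
(2k\varrho_{j_0}+\sumvec(\varrho)+c)^r = (2k)^r\varrho_{j_0}^r+O(k^{r-1}),
$$
so its modulus behaves like $e^{t\IM(\varrho_{j_0}^r)(2k)^r(1+o(1))}\gtrsim e^{ck^r}$ for some $c>0$. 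The plan is to exploit this $j_0$-axial amplification to build test vectors whose images escape the relevant distribution space.

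For Part (1) I will take $c_k=e^{-(\log(1+k))^2}$ and $f=\sum_{k\ge 0}c_k h_{ke_{j_0}}$. Since $c_k$ decays faster than every polynomial, \eqref{Eq:SchwSpHermChar} places $f\in\mascS(\rr d)$. The preceding estimate gives $|c_h(e^{-itH_{x,\varrho,c}^r}f,ke_{j_0})|\gtrsim e^{ck^r-(\log(1+k))^2}$, which outgrows every polynomial in $k$, so \eqref{Eq:TempDistHermChar} forbids the image from sitting in $\mascS'(\rr d)$. For Part (2) with $r>1$ the same template can be upgraded: I fix any $r_0>0$ and take $c_k=e^{-r_0 k}$, so that the sequence-space description of $\maclH_{1/2}(\rr d)=\maclS_{1/2}(\rr d)$ puts $f$ in $\maclS_{1/2}(\rr d)$, while the image Hermite coefficients satisfy $|c_h(e^{-itH_{x,\varrho,c}^r}f,ke_{j_0})|\gtrsim e^{ck^r-r_0 k}$. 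Because $r>1$, this beats $e^{Nk}$ for every $N$, which contradicts the analogous characterisation of $\maclS_{1/2}'(\rr d)=\maclH_{1/2}'(\rr d)$.

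The main obstacle is the endpoint $r=1$ of Part (2). Here a one-axial Hermite construction only yields the plain exponential growth $e^{(c-r_0)k}$, and this gets absorbed into $\maclS_{1/2}'(\rr d)$ as soon as $r_0$ is taken large enough, so the direct template breaks down. To circumvent this I will recast the propagator as a complex-order fractional Fourier transform through Theorem \ref{Thm:HarmPropFracFourT}: setting $\varrho'=4t\varrho/\pi$ and $c'=4tc/\pi$ one has $-itH_{x,\varrho,c}=-i(\pi/4)H_{x,\varrho',c'}$ and therefore
$$
e^{-itH_{x,\varrho,c}} = e^{-i(\pi/4)(\sumvec(\varrho')+c')}\mascF_{\!\varrho'},
$$
a scalar multiple of $\mascF_{\!\varrho'}$ whose parameter satisfies $\IM(\varrho'_{j_0})=(4t/\pi)\IM(\varrho_{j_0})>0$. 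Proposition \ref{Prop:IntroFracFourComplOrderRefine}$'$(3) then produces $f\in\maclS_{1/2}(\rr d)$ with $\mascF_{\!\varrho'}f\in\maclH_{0,1/2}'(\rr d)\setminus\maclS_{1/2}'(\rr d)$, which supplies the missing case and completes the plan.
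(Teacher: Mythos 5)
Your argument for part (1) and for part (2) with $r>1$ is essentially the paper's: the paper derives the proposition from Proposition \ref{Prop:BargPilSpacesNonCont} (and its primed extension), whose proof builds exactly such test functions by prescribing Hermite coefficients and then compares the growth of the multiplied coefficients against \eqref{Eq:SchwSpHermChar}, \eqref{Eq:TempDistHermChar} and the sequence-space definitions of $\maclH _{1/2}(\rr d)$ and $\maclH _{1/2}'(\rr d)$. Concentrating the coefficients on the ray $\alpha =ke_{j_0}$ is the right way to exploit the hypothesis that $\IM (\varrho _j^r)>0$ for only \emph{some} $j$.

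The point to correct is that your ``main obstacle'' at $r=1$ is illusory, so the detour through Theorem \ref{Thm:HarmPropFracFourT} and Proposition \ref{Prop:IntroFracFourComplOrderRefine}$'$\,(3) is unnecessary. To disprove continuity you only need \emph{one} $f\in \maclS _{1/2}(\rr d)$ whose image leaves $\maclS _{1/2}'(\rr d)$; you are therefore free to choose the decay rate $r_0$ small, rather than being obliged to survive every choice of $r_0$. Fix $t\in (0,T]$ and put $c_0=2t\, \IM (\varrho _{j_0})>0$. Taking $c_k=e^{-r_0k}$ with $0<r_0<c_0$ still gives $f\in \maclH _{1/2}(\rr d)=\maclS _{1/2}(\rr d)$, since membership only requires decay $e^{-r|\alpha |}$ for \emph{some} $r>0$, while the image coefficients have modulus $\asymp e^{(c_0-r_0)k}$ with $c_0-r_0>0$. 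Because $\maclH _{1/2}'(\rr d)=\maclS _{1/2}'(\rr d)$ is the \emph{projective} limit, i.e. $|c_h(g,\alpha )|\lesssim e^{\ep |\alpha |}$ must hold for \emph{every} $\ep >0$, any fixed positive exponential rate already disqualifies the image. So the direct template covers the endpoint $r=1$ as well, exactly as in the Roumieu endpoint case $s_2=\tfrac 1{2r}$ of Proposition \ref{Prop:BargPilSpacesNonCont}. Your replacement argument is formally admissible: the reduction $-itH_{x,\varrho ,c}=-i\tfrac \pi 4H_{x,\varrho ',c'}$ with $\varrho '=4t\varrho /\pi$ is correct and Proposition \ref{Prop:IntroFracFourComplOrderRefine}$'$\,(3) is a stated result of the paper. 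But that proposition is itself left unproved there and is essentially the $r=1$ case of the very statement you are proving, so invoking it trades a two-line computation for a near-circularity.
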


\par

On the other hand, by Proposition
\ref{Prop:BargPilSpaces}, it follows that
the mappings 
\eqref{Eq:DefEROpSec}, \eqref{Eq:DefS1ROpSec}
and \eqref{Eq:DefS2ROpSec}
are continuous on suitable
Pilipovi{\'c} spaces, which is explained in
the following result. The details are left for the reader.
Here we let
\begin{align*}
L^1([0,T];\maclH _{0,s}(\rr d))
&=
\underset{r>0}\projlim
L^1([0,T];\maclH _{s;r}(\rr d)),
\\[1ex]
L^1([0,T];\maclH _s(\rr d))
&=
\underset{r>0}\indlim
L^1([0,T];\maclH _{s;r}(\rr d)),
\\[1ex]
L^1([0,T];\maclH _s'(\rr d))
&=
\underset{r>0}\projlim
L^1([0,T];\maclH _{s;r}'(\rr d)),
\intertext{and}
L^1([0,T];\maclH _{0,s}'(\rr d))
&=
\underset{r>0}\indlim
L^1([0,T];\maclH _{s;r}'(\rr d)),
\end{align*}
where $\maclH _{s;r}(\rr d)$ and $\maclH _{s;r}'(\rr d)$
are the images of $\ell _{s;r}^\infty (\nn d)$ and
$\ell _{s;r}^{\infty ,*} (\nn d)$, respectively under the map
$T_{\maclH}$ in \eqref{Eq:THMap}, also in topological
sense. (See also Definition \ref{Def:SeqSpaces}.)

\par

\begin{thm}\label{Thm:ContPropPilSpaces}
Let $r,T>0$, $c\in \mathbf C$, $\varrho \in \cc d$
and let $s,s_1,s_2\in \overline {\mathbf R_{\flat}}$
be such that $0<s_1\le \frac 1{2r}$ and
$s_2< \frac 1{2r}$. Then following is true:
\begin{enumerate}
\item $E_{\varrho ,r,c}$ in \eqref{Eq:DefEROpSec}
is a homeomorphism on the spaces in
\eqref{Eq:Pilspaces2};

\vrum

\item $S_{j,\varrho ,r,c}$ in \eqref{Eq:DefS1ROpSec}
and \eqref{Eq:DefS2ROpSec} are homeomorphisms
on the spaces
\begin{equation}\label{Eq:Pilspaces2Strich}
\begin{alignedat}{2}
&L^1([0,T];\maclH _{0,s_1}(\rr d)), &
\quad
&L^1([0,T];\maclH _{s_2}(\rr d)),
\\[1ex]
&L^1([0,T];\maclH _{s_2}'(\rr d)) &
\quad \text{and}\quad
&L^1([0,T];\maclH _{0,s_1}'(\rr d)).
\end{alignedat}
\end{equation}
\end{enumerate}
\end{thm}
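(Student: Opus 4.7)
The plan is to reduce both parts to Proposition~\ref{Prop:BargPilSpaces}$'$(2) via the Bargmann/Hermite-series picture. For (1), for each fixed $t\in[0,T]$ the map $u_0\mapsto E_{\varrho,r,c}u_0(t,\cdot)$ is by definition the operator $e^{-itH_{x,\varrho,c}^r}$. Applying Proposition~\ref{Prop:BargPilSpaces}$'$(2) with $\zeta=-it$ and with $\zeta=it$ (admissible since $s_1\le 1/(2r)$ and $s_2<1/(2r)$) shows that this operator and its formal inverse $e^{itH_{x,\varrho,c}^r}$ are both continuous on every space in \eqref{Eq:Pilspaces2}. Hence the propagator is a homeomorphism on each of those spaces, giving (1).

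For (2) the content is genuinely a two-variable statement, so I would first descend to the Banach building blocks $\maclH _{s;r}(\rr d)$ and $\maclH _{s;r}'(\rr d)$ and show that $e^{-i\tau H_{x,\varrho,c}^r}$ is bounded on each of them with operator norm $C_{T,\varrho,r,c}$ \emph{uniform} in $\tau\in[-T,T]$. This refinement is not explicitly stated in Proposition~\ref{Prop:BargPilSpaces}$'$ but is easily extracted from its proof: on the Bargmann/sequence side the propagator acts as the diagonal multiplier $\{e^{-i\tau(2\scal \alpha \varrho +\sumvec(\varrho)+c)^r}\}_{\alpha}$ whose modulus is dominated by $e^{T\cdot C(\varrho,c)|\alpha|^r}$, and the weights $e^{\pm r|\alpha|^{1/(2s)}}$ defining $\ell_{s;r}^\infty$ and $\ell_{s;r}^{\infty,*}$ absorb this growth precisely because $r\le 1/(2s_1)$ (or $r<1/(2s_2)$) forces $|\alpha|^r\lesssim |\alpha|^{1/(2s)}$.

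Granting this uniform bound, Minkowski's inequality in the Banach building blocks gives
\begin{equation*}
\nm{(S_{j,\varrho ,r,c}F)(t,\cdot )}{\maclH _{s;r}}
\le \int _0^T \nm{e^{-i(t-\sigma )H_{x,\varrho ,c}^r}F(\sigma ,\cdot )}{\maclH _{s;r}}\, d\sigma
\le C_{T,\varrho ,r,c}\nm F{L^1([0,T];\maclH _{s;r})}
\end{equation*}
for $j=1,2$ (the truncation to $[0,t]$ for $j=1$ only tightens the estimate). Integrating in $t$ bounds $S_{j,\varrho ,r,c}$ on $L^1([0,T];\maclH _{s;r})$ with norm at most $T\cdot C_{T,\varrho ,r,c}$, and passing through the inductive/projective limits in $r$ gives continuity on all four spaces in \eqref{Eq:Pilspaces2Strich}.

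The main subtlety will be the homeomorphism assertion in (2), since $S_{j,\varrho ,r,c}$ are not obvious self-inverses on $L^1([0,T];X)$: for instance $S_2F=e^{-itH_{x,\varrho ,c}^r}G$ with $G=\int _0^T e^{i\sigma H_{x,\varrho ,c}^r}F(\sigma ,\cdot )\,d\sigma$ is always a single propagator trajectory and cannot exhaust $L^1([0,T];X)$. I would handle this by viewing $S_1$ as a bijection onto the subspace of trajectories $u$ with $u(0,\cdot )=0$ and $(i\partial _t-H_{x,\varrho ,c}^r)u\in L^1([0,T];X)$, and then use part~(1) to invert the propagator pointwise in $t$; continuous invertibility then reduces once more to the uniform-in-$\tau$ propagator bound extracted above.
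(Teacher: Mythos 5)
Your argument for part (1) and for the continuity half of part (2) is essentially the route the paper intends: the paper's entire ``proof'' consists of the sentence that the result follows from Proposition \ref{Prop:BargPilSpaces} (i.e.\ from the diagonal action of $e^{\zeta H_{x,\varrho ,c}^r}$ on Hermite coefficients), with all details left to the reader. Your reduction of (1) to Proposition \ref{Prop:BargPilSpaces}$'$(2) with $\zeta =\mp it$ is exactly that, and your Minkowski argument on the Banach building blocks, together with the uniform-in-$\tau$ bound for the multiplier $\{e^{-i\tau (2\scal \alpha \varrho +\sumvec (\varrho )+c)^r}\}_\alpha$ (absorbed by the weights since $|\alpha |^r\lesssim |\alpha |^{\frac 1{2s}}$ when $s\le \frac 1{2r}$, resp.\ $s<\frac 1{2r}$), is a correct and more explicit version of what the paper omits. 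One small technical point: the propagator is not bounded on a single building block $\maclH _{s;r'}(\rr d)$ but rather maps $\maclH _{s;r''}(\rr d)$ into $\maclH _{s;r'}(\rr d)$ for a shifted index $r''$; this is harmless for the inductive/projective limits but should be stated that way.

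Your worry about the homeomorphism assertion in (2) is well founded, and it is the one place where neither your proposal nor the paper supplies an argument. As you observe, $S_{2,\varrho ,r,c}F=e^{-itH_{x,\varrho ,c}^r}G$ with $G=\int _0^Te^{i\sigma H_{x,\varrho ,c}^r}F(\sigma ,\cdot )\,d\sigma$ independent of $t$, so $S_2$ is neither surjective on $L^1([0,T];X)$ nor injective (any $F$ with $G=0$ lies in its kernel), and $S_1$ is likewise not surjective. Hence ``homeomorphisms on the spaces \eqref{Eq:Pilspaces2Strich}'' cannot hold as literally stated; only continuity can be proved, or else the statement must be recast as a bijection onto a described subspace of trajectories, as you suggest. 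Your proposed workaround proves a different (corrected) statement rather than the one printed; since the paper gives no argument for this clause, the discrepancy should be flagged as an issue with the theorem's formulation rather than a defect of your proof.
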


\par

As consequences of Proposition 
\ref{Prop:DiscontPropGSSpaces}
and Theorem \ref{Thm:ContPropPilSpaces}
we get the following, concerning well-posed
properties of the equation
\eqref{Eq:GenSchrEqParPotThird}.

\par

\begin{cor}\label{Cor:ContPropPilSpaces}
Suppose that $r\ge 1$,
$c\in \mathbf C$ and $\varrho \in \cc d$
are such that $\operatorname{Im}(\varrho _j^r)>0$
for some $j\in \{1,\dots ,d\}$.
Then the following is true:
\begin{enumerate}
\item the equation \eqref{Eq:GenSchrEqParPotThird}
is ill-posed in the framework of
Schwartz functions, Gelfand-Shilov spaces,
and their dual spaces;

\vrum

\item the equation \eqref{Eq:GenSchrEqParPotThird}
is well-posed for the Pilipovi{\'c} spaces
and their dual spaces in \eqref{Eq:Pilspaces2}.
\end{enumerate}
\end{cor}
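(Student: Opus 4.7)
The plan is to deduce the corollary as a direct consequence of Proposition \ref{Prop:DiscontPropGSSpaces} and Theorem \ref{Thm:ContPropPilSpaces}, using the explicit solution formula \eqref{Eq:FormalSolSec}. That formula writes $u$ as $E_{\varrho,r,c}u_0$ plus a term depending only on $F$ through $S_{1,\varrho,r,c}$, so questions of well-posedness on a topological space $X$ reduce to continuity of the two operators $E_{\varrho,r,c}$ and $S_{1,\varrho,r,c}$ on $X$ (and the associated $L^1([0,T];X)$).

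For part (1), I would set $F\equiv 0$, so that $u(t,\cdot) = E_{\varrho,r,c}(u_0)(t,\cdot)$, and invoke Proposition \ref{Prop:DiscontPropGSSpaces}. The assumption $r\ge 1$ together with $\operatorname{Im}(\varrho_j^r)>0$ for some $j$ matches the hypotheses of both items (1) and (2) of that proposition, yielding discontinuity of $E_{\varrho,r,c}$ from $\mascS(\rr d)$ to $\mascS'(\rr d)$ and from $\maclS_{1/2}(\rr d)$ to $\maclS_{1/2}'(\rr d)$. Since $\maclS_{1/2}(\rr d)$ is the smallest and $\maclS_{1/2}'(\rr d)$ the largest member of the chain \eqref{Eq:SpacesInclusions}, any hypothetical continuity of $E_{\varrho,r,c}:X\to Y$ for an intermediate pair with $\maclS_{1/2}(\rr d)\hookrightarrow X$ and $Y\hookrightarrow \maclS_{1/2}'(\rr d)$ would, via composition with the canonical embeddings, produce a continuous map from $\maclS_{1/2}(\rr d)$ to $\maclS_{1/2}'(\rr d)$, contradicting the proposition. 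This propagates the ill-posedness to every Gelfand-Shilov space $\maclS_s$, $\Sigma_s$ and their duals listed in \eqref{Eq:SpacesInclusions}, as well as to $\mascS(\rr d)$ and $\mascS'(\rr d)$.

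For part (2), since $r\ge 1$ we have $\tfrac{1}{2r}\le \tfrac 12$, so every pair $(s_1,s_2)$ with $0<s_1\le \tfrac{1}{2r}$, $s_2<\tfrac{1}{2r}$ falls within the range covered by Theorem \ref{Thm:ContPropPilSpaces}. That theorem simultaneously provides that $E_{\varrho,r,c}$ is a homeomorphism on each Pilipovi\'c space in \eqref{Eq:Pilspaces2} and that $S_{1,\varrho,r,c}$ is a homeomorphism on the matching spaces in \eqref{Eq:Pilspaces2Strich}. Plugging these continuities into \eqref{Eq:FormalSolSec} shows that $u$ depends continuously on $(u_0,F)$ in the Pilipovi\'c framework, which is well-posedness.

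The only nontrivial step is the propagation-of-discontinuity argument in (1); everything else is a direct transcription of previously established mapping properties. I do not expect any genuine obstacle, since the solution formula is linear in $(u_0,F)$ and the required continuity and discontinuity statements are already packaged in Proposition \ref{Prop:DiscontPropGSSpaces} and Theorem \ref{Thm:ContPropPilSpaces}.
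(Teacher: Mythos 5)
Your proposal is correct and follows exactly the route the paper intends: the corollary is stated there as an immediate consequence of Proposition \ref{Prop:DiscontPropGSSpaces} and Theorem \ref{Thm:ContPropPilSpaces} via the solution formula \eqref{Eq:FormalSolSec}, with no further argument written out. Your propagation-of-discontinuity step through the inclusion chain \eqref{Eq:SpacesInclusions} is the natural way to fill in the detail the paper leaves implicit, and it is sound.
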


\par

\begin{rem}
If $c=0$, $r=1$, $F=0$ and $\varrho _j=i$ for every $j$ in
Corollary \ref{Cor:ContPropPilSpaces}, then
\eqref{Eq:GenSchrEqParPotThird} takes the form
\begin{equation}\tag*{(\ref{Eq:GenSchrEqParPotThird})$'$}
\begin{cases}
\partial _tu=-\Delta _x+|x|^2, 
\\[1ex]
u(0,x)=u_0(x),\qquad
(t,x)\in [0,T]\times \rr d.
\end{cases}
\end{equation}
That is we obtain a sort of heat equation, where the potential
term $|x|^2$ is included. The minus sign in front of the Laplace
operator $\Delta _x$ implies that we are searching for a solution
when moving backwards in time.

\par

Corollary \ref{Cor:ContPropPilSpaces} then shows that it is
\emph{not meaningful} to investigate
\eqref{Eq:GenSchrEqParPotThird} in
the framework of Schwartz spaces, Gelfand-Shilov spaces
and their distribution spaces. On the other hand, it
follows from the same corollary that it always makes sense
to investigate such problems in the framework of Pilipovi{\'c}
spaces which are not Gelfand-Shilov spaces, and their
distributions.
\end{rem}


\par

\subsection{Some further applications and remarks}

\par

A question which appears is whether our results are applicable
to problems like
\begin{equation}\label{Eq:GenSchrEqParPotFourth}
\begin{cases}
\partial _tu-\op ^{w}(a)u=F, 
\\[1ex]
u(0,x)=u_0(x),\qquad
(t,x)\in [0,T]\times \rr d,
\end{cases}
\end{equation}
when $a$ is a positive definite quadratic form on $\rr {2d}$.
Here $\op ^{w}(a)$ is the \emph{Weyl quantization} of
$a$, i.{\,}e. the operator on $\mascS (\rr d)$, given by
$$
\op ^w(a)f(x)=(2\pi )^{-d}\iint _{\rr {2d}}
a({\textstyle{\frac 12}}(x+y),\xi )f(y)e^{i\scal {x-y}\xi}\, dyd\xi .
$$
The operator $\op ^w(a)$ is continuous on $\mascS (\rr d)$
and on any Pilipovi{\'c} space,
which extends uniquely to a continuous map on $\mascS '(\rr d)$,
and to any Pilipovi{\'c} distribution space.
(See e.{\,}g. \cite{Ho1,Toft26}.)

\par

By introducing suitable new symplectic coordinates it
follows that $\op ^w(a)$ takes the form
$$
\op ^w(a) = \sum _{j=1}^d\varrho _j(x_j^2-\partial _{x_j}^2)+c,
$$
for some $\varrho _j>0$, $j=1,\dots ,d$, and some real
constant $c$, in these new coordinates. (See Section 18.6
in \cite{Ho1}.)

\par

By Proposition
\ref{Prop:DiscontPropGSSpaces} and Theorem
\ref{Thm:ContPropPilSpaces} it follows that
\eqref{Eq:GenSchrEqParPotFourth} is ill-posed
in the framework of Schwartz spaces, Gelfand-Shilov
spaces and their distribution spaces, but
well-posed for other types of Pilipovi{\'c} spaces of functions
and distributions. Here we notice that we need to keep
staying in these new symplectic coordinates, since Pilipovi{\'c}
spaces which are not Gelfand-Shilov spaces, are not
invariant under general changes of symplectic coordinates.
(See e.{\,}g. \cite{Toft18}.)

\par

\end{document}